\documentclass{amsart}

\usepackage{amsmath,amsthm,amssymb}
\usepackage[margin=2.5cm]{geometry}
\usepackage{enumitem}
\usepackage{tikz-cd}
\usepackage{mathrsfs}

\DeclareFontFamily{U}{wncy}{}
\DeclareFontShape{U}{wncy}{m}{n}{<->wncyr10}{}
\DeclareSymbolFont{mcy}{U}{wncy}{m}{n}
\DeclareMathSymbol{\Sha}{\mathord}{mcy}{"58} 

\def\bs{\boldsymbol}

\setlist[itemize]{label=--}

\newtheorem{theorem}{Theorem}[section]
\newtheorem{lemma}[theorem]{Lemma}
\newtheorem{prop}[theorem]{Proposition}
\newtheorem{cor}[theorem]{Corollary}
\newtheorem{conj}[theorem]{Conjecture}
\theoremstyle{definition}
\newtheorem{definition}[theorem]{Definition}
\newtheorem{example}[theorem]{Example}
\newtheorem{expect}[theorem]{Expectation}
\newtheorem{hyp}[theorem]{Hypothesis}

\newtheorem*{def*}{Definition}
\newtheorem*{ex*}{Example}
\theoremstyle{remark}
\newtheorem{remark}[theorem]{Remark}
\newtheorem*{remark*}{Remark}

\newcommand{\A}{\mathbb{A}}

\newcommand{\abs}[1]{\left|#1\right|}

\newcommand{\bra}{\langle}
\newcommand{\C}{\mathbb{C}}

\newcommand{\cont}{\mathrm{cont}}
\newcommand{\cris}{\mathrm{cris}}
\newcommand{\cyc}{\mathrm{cyc}}
\newcommand{\dR}{\mathrm{dR}}

\newcommand{\F}{\mathbb{F}}
\newcommand{\Fil}{\mathrm{Fil}}
\newcommand{\Fr}{\mathrm{Frob}}
\newcommand{\G}{\mathbf{G}}
\newcommand{\GL}{\mathrm{GL}}

\newcommand{\h}{\mathrm{H}}

\newcommand{\ket}{\rangle}

\newcommand{\mb}[1]{\mathbf{#1}}

\newcommand{\Norm}{\mathbf{N}}

\newcommand{\Or}{\mathcal{O}}
\newcommand{\p}{\mathfrak{p}}

\newcommand{\Q}{\mathbb{Q}}
\newcommand{\R}{\mathbb{R}}
\newcommand{\RG}{\mathbf{R}\Gamma}
\newcommand{\RHom}{\mathbf{R}\!\Hom}

\newcommand{\res}{\mathrm{res}}

\newcommand{\sh}[1]{\mathcal{#1}}

\newcommand{\tors}{\mathrm{tors}}
\newcommand{\U}{\mathrm{U}}

\newcommand{\Z}{\mathbb{Z}}

\renewcommand{\Im}{\operatorname{Im}}

\DeclareMathOperator{\Char}{char}
\DeclareMathOperator{\codim}{codim}
\DeclareMathOperator{\coker}{coker}

\DeclareMathOperator{\diag}{diag}

\DeclareMathOperator{\Ext}{Ext}
\DeclareMathOperator{\Frac}{Frac}
\DeclareMathOperator{\Gal}{Gal}
\DeclareMathOperator{\Hom}{Hom}

\DeclareMathOperator{\Log}{Log}

\DeclareMathOperator{\ord}{ord}

\DeclareMathOperator{\Res}{Res}

\DeclareMathOperator{\rank}{rank}

\DeclareMathOperator{\spf}{Spf}

\def\bs{\boldsymbol}

\newcommand{\Sh}{\mathrm{Sh}}

\newcommand{\CM}{\mathcal{K}}
\newcommand{\con}{\mathtt{c}}

\usepackage{hyperref}

\begin{document}

\title{Wall crossing in Iwasawa theory}
\author{Shilin Lai} 
\address{
Department of Mathematics, the University of Texas at Austin, 2515 Speedway, PMA 8.100, Austin, TX 78712, USA.} 
\email{shilin.lai@math.utexas.edu}

\begin{abstract}
	This paper sets up a framework to organize anticyclotomic Iwasawa theory in the context of the Gan--Gross--Prasad conjecture for unitary groups. We propose multiple main conjectures depending on archimedean weight interlacing conditions, generalizing phenomena in the anticyclotomic Iwasawa theory of elliptic curves. We also prove an abstract theorem in Galois cohomology which relates the conjectures.
\end{abstract}

\maketitle
\setcounter{tocdepth}{1}
\tableofcontents

\section{Introduction}
In studying the arithmetic of elliptic curves, Heegner points play many important role. In the ordinary case, the Heegner points fit into a $p$-adic family over the anticyclotomic $\Z_p$-extension of $\CM$. This family has been used to deduce two kinds of results:
\begin{itemize}
  \item rank 1: If the Heegner point is non-torsion, then the seminal work of Kolyvagin \cite{Kolyvagin88} proves that the Selmer group has rank 1 using an Euler system argument. This is extended to prove one divisibility of Perrin-Riou's Heegner point main conjecture \cite{PRHeegnerPoint} by Howard \cite{HowardDivisibility}, leading to results on the Birch--Swinnerton-Dyer formula in rank 1.
  \item rank 0: By $p$-adically deforming the family and using a special value formula proven by Bertolini--Darmon--Prasanna \cite{BDP}, Castella--Hsieh proved the vanishing of a Selmer group in the region where the specialization is non-geometric \cite{CH18}. They also proved one divisibility of the corresponding Greenberg--Iwasawa main conjecture.
\end{itemize}
In other words, a special point controls two different flavours of Iwasawa theory. From an automorphic point of view, the setting is conjugate self-dual, and the two different regions arise because of a change in the global root number from $-1$ to $+1$. Their interplay continues to play an important role in the arithmetic of elliptic curves, for example in the works of Skinner and Jetchev--Skinner--Wan towards rank 1 cases of the BSD conjecture \cite{SkinnerConverse,JSW}. 

The goal of this article is to propose a framework which generalizes this phenomenon to a higher dimensional setting, motivated by automorphic considerations arising from the Gan--Gross--Prasad conjecture and its arithmetic counterpart \cite{GGPConjecture}.

\subsection{Main results}
Let $\CM/\sh{F}$ be a CM extension, and let $\Pi$ be an regular algebraic, conjugate self-dual, cuspidal (RACSDC) automorphic representation on $\GL_{n-1}(\A_\CM)\times\GL_n(\A_\CM)$. Let $p$ be a prime which splits completely in $\CM$. Suppose $\Pi$ is ordinary at $p$. We are interested in the Iwasawa theory as $\Pi$ varies in the full Hida family of $2n-1$ variables. Let $\bs{T}$ be the associated $p$-adic family of Galois representations. It is a projective module over the ordinary Hecke algebra $\mathbb{I}$ attached to the Hida family. Its properties are axiomatized in \S\ref{ss:BigGaloisRep}.

At each archimedean place of $\sh{F}$, the weight of $\Pi$ there can be labelled by two tuples of half-integers
\[
  (a_1>\cdots>a_n)\in\Big(\Z+\frac{n-1}{2}\Big)^n,\quad (b_1>\cdots>b_{n-1})\in\Big(\Z+\frac{n}{2}\Big)^{n-1}
\]
The relative position of these weights plays an important role. We describe them using a weight interlacing string $\square$ (Definition~\ref{def:String}). Each $\square$ gives rise to a Selmer condition at primes above $p$. When $n=2$ and $\sh{F}=\Q$, the three possible interlacing conditions are
\[
  AAB: a_1>a_2>b_1,\quad ABA: a_1>b_1>a_2,\quad BAA: b_1>a_1>a_2
\]
In the middle case $ABA$, one example for $\Pi$ is just an elliptic curve which is ordinary at $p$, and the Selmer condition is the ordinary one \cite[\S 3.2.1]{SkinnerAWS}. If we twist the elliptic curve by an infinite order anticyclotomic Hecke character, then we are in one of the other two cases. The corresponding Selmer condition is strict at one place above $p$, and relaxed at the other place, leading to the BDP-Selmer group defined in \cite[\S 3.2.3]{SkinnerAWS}.

Now consider the $p$-adic family $\bs{T}$. For each string $\square$, its Selmer condition interpolates to give one for $\bs{T}$, and we can define a Greenberg-style Selmer group (more precisely the cohomology group of a Selmer complex, cf.~\S\ref{ss:IwAlg})
\[
  \h^2_\square(\CM,\bs{T}).
\]
When specialized to a classical point $x$ whose weight interlacing string is actually $\square$, this recovers the Bloch--Kato Selmer group. However, in a Hida family, the weight interlacing relation is not fixed, so we should consider all such Selmer groups, one for each choice of $\square$. As $\square$ varies, the only change in the Selmer condition is at the primes above $p$.

Each $\square$ should lead to its own Iwasawa main conjecture, but the nature of the conjecture differs depending on $\square$. In fact, we expect that the rank of $\h^2_\square(\CM,\bs{T})$ could be either 0 or 1 depending on $\square$. To explain this, note that the global root number of a specialization depends only on its weight interlacing string (Lemma~\ref{lem:Constant}). As a result, for ``half'' of the strings $\square$, the global root number is $-1$, and the Bloch--Kato conjecture predicts that the Selmer groups for those specializations have rank at least 1. In this case, $\h^2_\square(\CM,\bs{T})$ is expected to have rank 1 over $\mathbb{I}$. This is in contrast to non-self-dual settings such as cyclotomic deformation \cite{GreenbergMotives}, where the rank is expected to be 0.

To formalize the above discussions, we propose the following framework.
\begin{conj}[Conjecture~\ref{conj:MainConjecture}]
  Suppose $p$ splits completely in $\CM$ and $\Pi$ is ordinary at $p$. Given a $p$-adic family of Galois representations $\bs{T}$ as above, there are $\binom{2n-1}{n}^{[\sh{F}:\Q]}$ main conjectures indexed by weight interlacing strings $\square$. They are divided into two types depending only on a sign attached to $\square$:
  \begin{itemize}
    \item \textbf{Coherent case} The Selmer group $\h^2_\square(\CM,\bs{T})$ is torsion over $\mathbb{I}$, and
    \[
      \Char_{\mathbb{I}}\h^2_\square(\CM,\bs{T})=(\sh{L}_p^\square)^2,
    \]
    where $\sh{L}_p^\square$ is a $p$-adic $L$-function whose square interpolates central critical values of classical points when their archimedean weights lie in the region defined by $\square$.
    \item \textbf{Incoherent case} The Selmer group $\h^2_\square(\CM,\bs{T})$ has rank 1 over $\mathbb{I}$, and there is a special class
    \[
      \bs{z}_\square\in\h^1_\square(\CM,\bs{T})
    \]
    such that
    \[
      \Char_{\mathbb{I}}\h^2_\square(\CM,\bs{T})_\mathrm{tors}=\Char_{\mathbb{I}}\left(\frac{\h^1_\square(\CM,\bs{T})}{\mathbb{I}\cdot\bs{z}_\square}\right)^2
    \]
  \end{itemize}
\end{conj}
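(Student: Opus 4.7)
The plan is to model the approach on the elliptic curve ($n=2$) case studied in \cite{PRHeegnerPoint,HowardDivisibility,BDP,CH18}, where both conjectures are known up to one divisibility, and extend each half to the general GGP setting. The strategy naturally splits by the type of $\square$, and within each type one expects divisibility in each direction from automorphic input on one side and cohomological input (Euler/Kolyvagin system) on the other.

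In the coherent case, the first step is to construct $\sh{L}_p^\square$ by pulling back Eisenstein series on a larger unitary group via the doubling method and integrating against Hida families; the appearance of $(\sh{L}_p^\square)^2$ on the analytic side is built in, since doubling integrals naturally compute the square of a central $L$-value. Next, one shows that $\h^2_\square(\CM,\bs{T})$ is $\mathbb{I}$-torsion by a control theorem reducing to classical points in the coherent region, where the Bloch-Kato Selmer group is expected (and in favourable cases known) to be finite. One divisibility of the characteristic ideal should then follow from an Eisenstein congruence argument on a larger unitary group, and the reverse divisibility from interpolating BDP-type Euler classes along $\bs{T}$ and invoking an explicit reciprocity law identifying their image in Iwasawa cohomology with $\sh{L}_p^\square$.

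In the incoherent case, the class $\bs{z}_\square$ should be constructed by $p$-adic interpolation of diagonal (Gan-Gross-Prasad) cycles on products of unitary Shimura varieties, projected onto the ordinary Hida tower of $\bs{T}$. To show that $\h^2_\square(\CM,\bs{T})$ has $\mathbb{I}$-rank exactly $1$ and to bound its torsion part from above, one would follow the Kolyvagin-Howard template \cite{Kolyvagin88,HowardDivisibility}: construct a horizontally norm-compatible system of such cycles over ring class fields of $\CM$, form Kolyvagin derivative classes, and exploit global-local duality. The reverse divisibility should come from an explicit reciprocity law expressing the image of $\bs{z}_\square$ under a Perrin-Riou style map as a $p$-adic $L$-function interpolating central values in the incoherent region, thereby reducing that half to a coherent-type main conjecture on a dual deformation. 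The reappearance of a square in the characteristic ideal formula reflects the same doubling phenomenon, now with $\bs{z}_\square$ playing the role of the square root of the leading term.

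The principal obstacle, by a wide margin, is the construction of the Euler system in the incoherent case: the required horizontally norm-compatible family of GGP cycles over anticyclotomic towers is not known in the generality needed here, and proving the requisite tame norm relations is a central open problem in the arithmetic Gan-Gross-Prasad program. A serious secondary obstacle is the construction of $\sh{L}_p^\square$ itself: the interpolation region depends on $\square$, and matching $(\sh{L}_p^\square)^2$ on the nose to a central value requires a delicate choice of archimedean test vectors, currently available only in low rank. For these reasons, I would not expect a complete proof in the foreseeable future; the realistic goal is to establish each divisibility under ordinariness, non-degeneracy, and big-image hypotheses, as in the $n=2$ precedents, and to use the abstract Galois-cohomological theorem proven later in this paper to propagate results between different $\square$.
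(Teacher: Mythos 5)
This statement is labelled a \emph{conjecture} in the paper, and the paper does not prove it; the main text only states it, explains its anatomy, and proves the auxiliary Theorem~\ref{thm:Equiv} relating the coherent and incoherent conjectures for nearby interlacing strings. So there is no ``paper's own proof'' to compare your proposal against, and you have correctly recognized this: your write-up is a research roadmap rather than a proof, and your concluding paragraph is an honest assessment of its status. Your roadmap is broadly consistent with the paper's own discussion in \S 1.2 (the works of Liu, Liu--Tian--Xiao, Jetchev--Nekov\'a\v{r}--Skinner, Hsieh--Yamana, Harris, Castella--Do, and the plan to use the abstract Galois-cohomology theorem to propagate between regions), and you correctly identify that Theorem~\ref{thm:Equiv} is the tool for moving a divisibility from one $\square$ to a nearby one.

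Two small corrections to the heuristics. First, the appearance of $(\sh{L}_p^\square)^2$ on the analytic side is not an artifact of the doubling method: Piatetski-Shapiro--Rallis doubling integrals compute the central $L$-value directly, not its square. The square arises because the global Gan--Gross--Prasad formula expresses $L(\tfrac{1}{2},\Pi)$ as the \emph{square} of a period integral (up to local factors), so the object that admits $p$-adic interpolation along the Hida tower is the period itself, and $\sh{L}_p^\square$ is a ``square root'' $p$-adic $L$-function in the sense of \cite{HarrisSqRoot}. Second, arguing that $\widetilde{\h}^2_\square(\CM,\bs{T})$ is $\mathbb{I}$-torsion ``by a control theorem reducing to classical points'' is too optimistic as stated: finiteness of the Bloch--Kato Selmer group at a Zariski-dense set of specializations does not by itself force the family module to be torsion, since the specialization maps can have kernel and cokernel supported exactly at the height-one primes you care about. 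Controlling those error terms is essentially equivalent to proving the main conjecture at those primes, so this step cannot be taken for free.
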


\begin{remark}
  In the related orthogonal GGP setting $\mathrm{SO}_4\times\mathrm{SO}_5$, analogous conjectures have been formulated by Loeffler--Zerbes \cite{LZConjecture}. 
\end{remark}

The special class should play an important role for both the incoherent and coherent main conjecture. We explain this as a wall crossing phenomenon. In the $n=2$ case, if we change the weight interlacing relation from $(a_1>a_2>b_1)$ to $(a_1>b_1>a_2)$, then the local root number at the archimedean place changes, so we transition between a coherent and an incoherent region. This is an example of a pair of \emph{nearby} interlacing relations. In general, there is a relation between the main conjectures for nearby words.

\begin{theorem}[Theorem~\ref{thm:Equiv}]
  Assume $\mathbb{I}$ is Gorenstein. Suppose $\square$ and $\triangle$ are nearby (Definition~\ref{def:Nearby}), and $\triangle$ is incoherent, then there is a regulator map
  \[
    \mathrm{reg}:\widetilde{\h}^1_\triangle(\CM,\bs{T})\to\mathbb{I}
  \]
  If $\mathrm{reg}(\bs{z}_\triangle)\neq 0$ for some $\bs{z}_\triangle\in\widetilde{\h}^1_\triangle(\CM,\bs{T})$, then the incoherent main conjecture for $\triangle$ and special cycle $\bs{z}_\triangle$ is equivalent to the coherent main conjecture for $\square$, with the $p$-adic $L$-function $\sh{L}_p^\square$ replaced by the motivic $p$-adic $L$-function defined by $\sh{L}_\mathrm{mot}^\square:=\mathrm{reg}(\bs{z}_\triangle)$.
\end{theorem}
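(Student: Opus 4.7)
The plan is to deduce the equivalence from a direct comparison between the extended Selmer complexes attached to $\square$ and $\triangle$, realized by an exact triangle localized at the primes above $p$ where the two Selmer conditions differ. Since $\square$ and $\triangle$ are nearby, this locus consists of a single pair of conjugate primes $v, \bar v \mid p$, and at $v$ the local condition $F^+_v(\square)$ sits inside $F^+_v(\triangle)$ with a rank-one graded piece $L := F^+_v(\triangle)/F^+_v(\square)$, free over $\mathbb{I}$ by ordinarity. This produces an exact triangle of Selmer complexes
\[
  \widetilde{\RG}_\square(\CM,\bs{T}) \to \widetilde{\RG}_\triangle(\CM,\bs{T}) \to \RG_\cont(\CM_v, L) \to
\]
whose local term is concentrated in a single degree and free of rank one over $\mathbb{I}$, by local Tate duality and ordinarity.

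Next I would combine this with the symmetric dual triangle at $\bar v$ to produce a six-term exact sequence of the form
\[
  0 \to \widetilde{\h}^1_\square(\CM,\bs{T}) \to \widetilde{\h}^1_\triangle(\CM,\bs{T}) \xrightarrow{\mathrm{reg}} \mathbb{I} \to \widetilde{\h}^2_\square(\CM,\bs{T}) \to \widetilde{\h}^2_\triangle(\CM,\bs{T}) \to \mathbb{I} \to 0,
\]
the second $\mathbb{I}$ being identified via Gorenstein self-duality of $\mathbb{I}$; the map $\mathrm{reg}$ appearing here is by construction the regulator promised in the theorem. Under the hypothesis $\mathrm{reg}(\bs{z}_\triangle) \neq 0$, the map $\mathrm{reg}$ is nonzero, forcing $\widetilde{\h}^1_\triangle$ to have rank one with $\bs{z}_\triangle$ spanning its free part, while $\widetilde{\h}^1_\square$ is $\mathbb{I}$-torsion.

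I would then translate this six-term sequence into a relation for the Selmer groups $\h^i$ appearing in Conjecture~\ref{conj:MainConjecture}. Applying the snake lemma and computing characteristic ideals along the sequence — together with the first isomorphism theorem for $\mathrm{reg}$, which identifies the image modulo $(\mathrm{reg}(\bs{z}_\triangle))$ as an extension controlled by $\h^1_\triangle / \mathbb{I}\bs{z}_\triangle$ — should yield the identity
\[
  \Char_\mathbb{I}\bigl(\h^2_\square(\CM,\bs{T})\bigr) \cdot \Char_\mathbb{I}\Bigl(\h^1_\triangle(\CM,\bs{T})/\mathbb{I}\bs{z}_\triangle\Bigr)^2 = \bigl(\mathrm{reg}(\bs{z}_\triangle)\bigr)^2 \cdot \Char_\mathbb{I}\bigl(\h^2_\triangle(\CM,\bs{T})\bigr)_\tors,
\]
from which the equivalence between the coherent main conjecture for $\square$ (with $\sh{L}^\square_p$ replaced by $\mathrm{reg}(\bs{z}_\triangle)$) and the incoherent main conjecture for $\triangle$ follows directly.

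The delicate point, and what I expect to be the main obstacle, is the symmetric treatment of the pair $v, \bar v$: the squared exponents in the two main conjectures must emerge from the symmetric pairing of the two transition triangles under Gorenstein self-duality of $\mathbb{I}$, and one must verify that the $\bar v$-contribution yields precisely the same $(\mathrm{reg}(\bs{z}_\triangle))$ factor as the $v$-contribution rather than a twist thereof. A secondary difficulty is the passage between $\widetilde{\h}^i$ and $\h^i$, together with checking that $\bs{z}_\triangle$ is compatibly placed relative to the Selmer structure of $\triangle$, so that the characteristic ideal accounting yields the correct divisors.
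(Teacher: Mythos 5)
Your final characteristic-ideal identity is the right one, but the path you propose to reach it has a structural flaw. The Selmer conditions for $\square$ and $\triangle$ are \emph{not nested}: writing $\square=(\llcorner,\llcorner)$ and $\triangle=(\urcorner,\urcorner)$ in the paper's self-dual pairings, $\triangle$ uses the \emph{larger} local subspace at $\p$ but the \emph{smaller} one at the conjugate place $\bar\p$ (the two conditions are each self-dual, so a change at $\p$ forces the opposite change at $\bar\p$). Consequently there is no morphism of Selmer complexes $\widetilde{\RG}_\square\to\widetilde{\RG}_\triangle$, no exact triangle with a single local cone, and no six-term sequence of the kind you write down; you correctly flag "the symmetric treatment of the pair $v,\bar v$" as the delicate point, but it cannot be resolved by gluing the two triangles, because they point in opposite directions.

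The argument actually requires the two non-self-dual intermediate conditions (small at both, and large at both places). One gets three exact triangles, each involving only the place $\p$, which produce the cokernel $C$ of the regulator and yield three characteristic-ideal relations. The fourth and final relation — comparing $\widetilde{\h}^2$ of the two intermediate conditions — does \emph{not} come from a local triangle: these intermediates are dual to each other, so it follows from Poitou--Tate global duality in the form of Nekov\'a\v{r}'s triangle $(\vee)$, localized at height-one primes, and here Tamagawa-number error terms $\mathrm{Err}_v$ enter and must cancel (via Proposition~\ref{prop:Err}, which asserts $\mathrm{Err}^1$ and $\mathrm{Err}^2$ have equal length). Your proposal contains no trace of the duality step or of these error terms, and the "second $\mathbb{I}$" in your six-term sequence has no source: $\RG_\cont(\CM_\p,\bs{T}^\urcorner/\bs{T}^\llcorner)$ is concentrated in degree $1$ up to pseudo-null (Corollary~\ref{cor:Coleman}), so there is no rank-one contribution in degree $2$. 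The square in the final identity does not come from the pairing of two transition triangles under Gorenstein duality as you suggest; it emerges because $\Char(C)$ is picked up once from each of two of the three local triangles, with duality closing the loop between the two intermediate Selmer groups.
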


The main input to this theorem is the construction of the regulator map, which comes down to the one-dimensional Coleman map. The proof of the equivalence then involves a series of applications of global duality theorems, which we organize using Selmer complexes.

\subsection{Related and future works}
We know describe some cases of the conjecture which have been studied in the literature. Along the way, we indicate a few questions for future investigations.
\subsubsection{Diagonal condition}
We first consider the ``diagonal'' interlacing condition
\[
  \mathtt{diag}:a_1>b_1>a_2>\cdots>b_{n-1}>a_n.
\]
For a single representation $\Pi$ with this interlacing condition, the paper \cite{LTXZZ} made significant progresses towards rank 0 and 1 cases of the Bloch--Kato conjecture. For the partial family of $\Pi$ twisted by Hecke characters, the $p$-adic $L$-function $\sh{L}_p^\mathtt{diag}$ in the coherent case and the special class $\bs{z}_\mathtt{diag}$ in the incoherent case were both constructed by Yifeng Liu \cite{LiuRS}. Under some technical hypotheses, the divisibility (LHS)$|$(RHS) of both the coherent and the incoherent main conjecture was recently established by Liu--Tian--Xiao \cite{LTX}.

The theory of split anticyclotomic Euler systems by Jetchev--Nekov\'{a}\v{r}--Skinner \cite{JNS} offers an alternative approach to establishing cases of the incoherent main conjecture. The necessary Euler system input was recently constructed by the author and Skinner \cite{LaiSkinner}, giving a different proof the incoherent divisibility of Liu--Tian--Xiao. It is likely one make this approach work over the full Hida family using the method described in \cite{LRZ-Moment}. We plan to investigate this in a future work.

When $n=3$ and $\sh{F}=\Q$, the full 5-variable $p$-adic $L$-function $\sh{L}_p^\mathtt{diag}$ has been constructed by Hsieh--Yamana \cite{HsiehYamana23}. It would be interesting to study if the method of \cite{LTX} can be extended to prove one divisibility of the 5-variable coherent main conjecture.

\subsubsection{Other conditions}
Starting with $\mathtt{diag}$, there are $2(n-1)$ nearby words, one example being
\[
  \square:b_1>a_1>a_2>b_2>\cdots>b_{n-1}>a_n,
\]
which we obtained from $\mathtt{diag}$ by switching the order of $a_1$ and $b_1$. In the case where $\mathtt{diag}$ is incoherent, all such nearby words are coherent. Once the full Hida family deformation $\bs{z}_\mathtt{diag}$ is constructed, our theorem relates the incoherent main conjecture for $\mathtt{diag}$ to coherent main conjectures for all $2(n-1)$ nearby words.

To exploit this relation, we need to relate the regulator of $\bs{z}_\mathtt{diag}$ to a $p$-adic $L$-function obtained by interpolation, or in other words, prove an ``explicit reciprocity law''. In the case of $\square$ given above, a 1-variable $p$-adic $L$-function has been constructed by Harris \cite{HarrisSqRoot}. As explained in Example~\ref{ex:DiagNearby}, this is exactly the variable needed to deform the word $\square$ into $\mathtt{diag}$. The explicit reciprocity law in this case should be within reach of current methods.

On the other hand, when the word $\mathtt{diag}$ is coherent, its $p$-adic $L$-function should be related to special classes for nearby words such as $\square$. However, currently there is no construction of these special classes except for $n=2$, which is a recent work of Castella--Do \cite{CastellaTuan}. One way to replicate their construction in our setting is to allow non-cuspidal $\Pi$. In the product $\Pi=\Pi_{n-1}\times\Pi_n$, we can take $\Pi_n$ to be an isobaric sum of the form $\Pi_{n-2}\boxplus\chi_1\boxplus\chi_2$ for two auxiliary characters $\chi_1,\chi_2$. Then the Galois representation for $\Pi$ decomposes as a direct sum, where one piece is associated to the lower rank case $\Pi_{n-2}\times\Pi_{n-1}$. It is a subtle and interesting question to understand this process in family.

Finally, some computations in \S\ref{sec:Analytic} raises a possibility that there may be further connections between the regions of interpolation. This already appears in the $n=3$ case, and we hope this can be clarified in the near future.

\subsection{Notations and conventions}\label{sec:Notation}
Let $\sh{F}$ be a totally real field of degree $g$, and let $\CM$ be a CM extension of $\sh{F}$. Let $\mathtt{c}$ be a fixed element in $\Gal_{\sh{F}}$ which is non-trivial in $\Gal(\CM/\sh{F})$. Fix a CM type $\Sigma^+_\infty$ for $\CM$. We will typically use it to index the archimedean places of $\sh{F}$, so for each archimedean place, we have a distinguished embedding $\CM\hookrightarrow\C$.

Let $p>2$ be a rational prime such that
\begin{equation}\tag{spl}
  p\text{ splits completely in }\CM
\end{equation}
Fix an isomorphism $\iota:\C\simeq\C_p$. Let $\Sigma_p^+=\{\iota\circ\sigma\,|\,\sigma\in\Sigma^+_\infty\}$. This is a $p$-adic CM type, and we may identify $\Sigma_p^+$ with a set of $p$-adic places of $\CM$. Moreover, $\Sigma_p^+\cap\Sigma_p^+\mathtt{c}=\emptyset$, so for each $p$-adic place $v$ of $\sh{F}$, the CM type distinguishes one of the two places above $v$.

Let $\A$ denote the ring of adeles of $\sh{F}$ and $\A_\sh{K}$ denote the ring of adeles of $\sh{K}$. We will systematically use the following convention: a superscript consisting of places indicates omitting those places, and a subscript consisting of places indicates only considering those places. For example, $\A_{\sh{K},f}$ is the ring of finite adeles of $\sh{K}$, and $\A_f^p=\A^{p\infty}$ is the ring of finite adeles of $\sh{F}$ omitting the places above $p$.

Class field theory is always normalized geometrically. We will implicitly use it to pass between characters of the Galois group and algebraic Hecke characters. Hodge--Tate weights are normalized so that the cyclotomic character has weight $-1$.

\subsection*{Acknowledgement}
This article is a part of the author's PhD thesis, and I would like to thank my advisor Christopher Skinner for suggesting this problem and advising me throughout the process. I would also like to thank Ashay Burungale, Francesc Castella, Zheng Liu, and Wei Zhang for their interests, helpful discussions, and constant encouragement.

\section{\texorpdfstring{$p$}{p}-adic family of Galois representations}\label{sec:1}
In this section, we set up some notations and definitions for Iwasawa theory. In particular, we construct a Coleman map in our setting. Fix $E$, a finite extension of $\Q_p$. Let $\Or$ be its ring of integers. They will serve as coefficients for the representations we consider.

\subsection{Weight space}\label{ss:WeightSpace}
Let $\mathbf{T}=\Res_{\sh{F}/\Q}\G_m^r$. Let $\underline{\Lambda}=\Or[[\mathbf{T}(\Z_p)]]$ be its completed group ring. This is a semi-local regular Noetherian ring. Let $\mb{T}^0$ be the pro-p part of the abelian group $\mb{T}(\Z_p)$, then we have a factorization
\[
  \mb{T}(\Z_p)=\mb{T}^0\times\Delta
\]
for some finite abelian group $\Delta$. Correspondingly, we can write $\underline{\Lambda}=\Lambda[\Delta]$, where $\Lambda$ is the completed group ring of $\mb{T}^0$. Given a character $\psi:\Delta\to\Or^\times$, we can pair with it to get a surjection $\mathtt{pr}_\psi:\underline{\Lambda}\to\Lambda$. This presents $\Lambda$ as a $\underline{\Lambda}$-algebra, and we will write $\Lambda^\psi$ if we want to emphasize this structure. 

Let $\underline{\sh{W}}$ be the rigid analytic space attached to $\spf\underline{\Lambda}$, then it is a disjoint union of open unit discs labelled by characters of $\Delta$. Let $\sh{W}_\psi$ denote the component corresponding to $\psi$. We will view $\underline{\Lambda}$ as the bounded functions in the space of rigid functions $\Or(\underline{\sh{W}})$.

Given a cocharacter $\epsilon:\Res_{\sh{F}/\Q}\G_m\to\mathbf{T}$, we can define the tautological character
\begin{equation}\label{eqn:TautChar}
  \bs{\chi}_\epsilon:\Gal_\CM\twoheadrightarrow\Gal(\CM^\mathrm{ac}/\CM)\to\Res_{\sh{F}_p/\Q_p}\Z_p^\times\to\underline{\Lambda}^\times,
\end{equation}
where the final step sends $z$ to the group element $[\epsilon(z)]$. We say a specialization $x:\underline{\Lambda}\to\bar{\Q}_p$ is \emph{classical} if $x\circ\bs{\chi}_\epsilon$ is Hodge--Tate for all $\epsilon$. In this case, we define the $\epsilon$-weight $w_\epsilon(x)\in\Z^g$ to be negative of the Hodge--Tate weight of $x\circ\bs{\chi}_\epsilon$ at the places of $\CM$ lying in the CM type $\Sigma_p^+$. In particular, by taking $\varepsilon$ to be the $r$ standard cocharacters, we get a tuple of weights
\[
  (w_1^\sigma,\cdots,w_r^\sigma)_{\sigma\in\Sigma_p^+}\in\prod_{\sigma\in\Sigma_p^+}\Z^r
\]
attached to $x$. The subset of classical points of $\underline{\sh{W}}$ will be denoted by $\underline{\sh{W}}^\mathrm{cl}$. 

\subsection{\texorpdfstring{$p$}{p}-adic family} Let $\underline{\mathbb{I}}$ be a ring with a finite inclusion $\underline{\Lambda}\to\underline{\mathbb{I}}$. Let $\underline{\sh{E}}$ denote the rigid space attached to $\spf\underline{\mathbb{I}}$, so we have a finite morphism $\underline{\sh{E}}\to\underline{\sh{W}}$. We say a point of $\underline{\sh{E}}$ is classical if its image in $\underline{\sh{W}}$ is classical. In the intended applications, $\underline{\sh{E}}$ occurs as an irreducible component of the ordinary eigenvariety.

\begin{definition}\label{def:Family}
  A $p$-adic family of Galois representations is a finitely generated $\underline{\mathbb{I}}$-module $\bs{T}$ with an action of $\Gal_\CM$ such that
  \begin{enumerate}
    \item The $\Gal_\CM$-action is admissible in the sense of \cite[Section 3.2]{NekovarSC}.
    \item As a $\underline{\Lambda}$-module, $\bs{T}$ is projective.
  \end{enumerate}
\end{definition}

The first condition is a version of continuity. In the cases we will consider, it holds for a finite rank $\underline{\mathbb{I}}$-module if the action is continuous with respect to the topology induced from each of the maximal ideal of $\underline{\Lambda}$. The notion is also stable under Pontryagin duality and extensions, so we will usually not need to explicitly check it in practice.

If $\bs{\chi}$ is a character valued in $\underline{\mathbb{I}}^\times$, then we write $\underline{\mathbb{I}}(\bs{\chi})$ to mean $\underline{\mathbb{I}}$ with the Galois group acting through $\bs{\chi}$. This is admissible if and only if $\bs{\chi}$ is continuous. In that case it defines a $p$-adic family of characters. In particular, for any $\alpha\in\underline{\mathbb{I}}^\times$, we can form the unramified character
\[
  \bs{\alpha}:\Gal_{\Q_p}\to\underline{\mathbb{I}}^\times
\]
sending a geometric Frobenius to $\alpha$. Moreover, using the inclusion $\underline{\Lambda}\to\underline{\mathbb{I}}$, the universal characters $\bs{\chi}_\epsilon$ defined in equation~\eqref{eqn:TautChar} are also characters on $\underline{\mathbb{I}}$.

\subsection{Commutative algebra}
The notation $(-)^\vee$ will always mean taking the (underived) dual with respect to a dualizing module over the implied base ring. This is usually applied with $\Z_p$ or $\Lambda$, depending on the context. Both of them are Gorenstein, so they are their own dualizing module.
\begin{definition}\label{def:CSD}
  A $p$-adic family $\bs{T}$ is \emph{conjugate self-dual} if there exists a $\Gal_K$-invariant bilinear pairing $\bs{T}\otimes_\Lambda\bs{T}^\con\to\Lambda(1)$ which induces an isomorphism $\bs{T}^\con\simeq\Hom_\Lambda(\bs{T},\Lambda)(1)$.
\end{definition}
 Now suppose $\bs{T}$ is conjugate self-dual. There are two types of duality pairing we want to consider for $\bs{T}$. For the ring $R=\Lambda$, the representations $\bs{T}$ and $\bs{T}^\con$ satisfy condition 5.2.3(B) of \cite{NekovarSC}, so we get a good theory of duality for their Galois cohomology complexes viewed as $\Lambda$-modules. Let $(-)^*$ denote the Pontryagin dual, then we have $\bs{T}^*\simeq\bs{T}^\vee\otimes_\Lambda\Lambda^*$ as $\Lambda$-modules. The pair $\bs{T}$ and $\bs{T}^\con\otimes_\Lambda\Lambda^*$ therefore satisfies condition 5.2.3(A) of \cite{NekovarSC}.

Finally, we define notations surrounding the characteristic ideal. Let $P$ be a height one prime ideal of a complete local ring $R$ (either $\Lambda$ or $\mathbb{I}$ in applications). It is associated to a valuation $v_P$ on $\Lambda$ which can be computed as $v_P(x)=\mathrm{length}_{R_P}(R_P/(x))$. The characteristic ideal of a torsion $R$-module $X$ is defined to be
\[
  \Char_{R}(X):=\{x\in\Lambda\,|\,v_P(x)\geq\mathrm{length}_{R_P}(X_P)\text{ for all $P$ of height 1}\}
\]
It is clear that the characteristic ideal is multiplicative in exact sequences. A torsion module is pseudo-null if its characteristic ideal is the unit ideal. They form a Serre subcategory of the category of all $R$-modules. We use the notation $\mathbf{psMod}_R$ to denote the quotient abelian category. Though we will not use it, we remark that all torsion $\Lambda$-modules are isomorphic to a direct sum of the form $\bigoplus_i\Lambda/(f_i)$ in $\mathbf{psMod}_\Lambda$.

\subsection{Coleman map}
We want to generalize the classical Coleman logarithm $\h^1(\Q_p,\Lambda(1))\simeq\Lambda$ to our setting. Our result will follow from a slight modification of the constructions in \cite{OchiaiColemanMap}. We first recall some notations. Let $\Q_p^\mathrm{ur}$ denote the maximal unramified extension of $\Q_p$ and $\Gamma_\mathrm{ur}=\Gal(\Q_p^\mathrm{ur}/\Q_p)$. There is a canonical isomorphism $\hat{\Z}\to\Gamma_\mathrm{ur}$ sending $1$ to the geometric Frobenius $\sigma$.

Let $\chi_\mathrm{cyc}:\Gal_{\Q_p}\to\Z_p^\times$ be the cyclotomic character. Let $\epsilon:\G_m\to\mathbf{T}$ be a standard cocharacter, so we have a universal character $\bs{\chi}_\epsilon:\Gal_{\Q_p}\to\underline{\Lambda}^\times$ defined by equation~\eqref{eqn:TautChar}. Let $\bs{\alpha}\in\underline{\mathbb{I}}^\times$, which gives an unramified character $\bs{\alpha}$. We are interested in the character
\[
  \bs{\chi}=\bs{\alpha}\bs{\chi}_\epsilon:\Gal_{\Q_p}\to\underline{\mathbb{I}}^\times.
\]

\begin{theorem}\label{thm:Coleman}
  Suppose that the specialization of $\bs{\alpha}$ at the trivial character is not equal to 1. Then for any finitely generated $\underline{\mathbb{I}}$-module $M$ which is projective over $\Lambda$, there is an isomorphism of $\underline{\mathbb{I}}$-modules
  \[
    \mathrm{Col}: \h^1(\Q_p,M(\bs{\chi}))\to (M(\bs{\alpha})\hat{\otimes}_{\Z_p}\hat{\Z}_p^\mathrm{ur})^{\Gamma_\mathrm{ur}}
  \]
  Moreover, the right hand side is non-canonically isomorphic to $M$.
\end{theorem}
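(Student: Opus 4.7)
The strategy is to adapt the construction in \cite{OchiaiColemanMap}. The character $\bs{\chi}$ decomposes as the product of its unramified part $\bs{\alpha}$ and its ramified part $\bs{\chi}_\epsilon$, the latter factoring through local class field theory as $\Gal_{\Q_p} \twoheadrightarrow I_p^{\mathrm{ab}} \cong \Z_p^\times \xrightarrow{\epsilon} \underline{\Lambda}^\times$. I would handle these two pieces separately: a Shapiro-style identification to absorb $\bs{\chi}_\epsilon$, followed by an unramified cohomology computation to analyze $\bs{\alpha}$.

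Let $L_\infty$ be the totally ramified abelian extension of $\Q_p$ fixed by $\ker(\bs{\chi}_\epsilon|_{\Gal_{\Q_p}})$, with Galois group $G_\infty \hookrightarrow \Z_p^\times$. Setting $\Lambda_0 := \Z_p[[G_\infty]] \hookrightarrow \underline{\Lambda}$, the $\Gal_{\Q_p}$-module $\Lambda_0$ with action through $\bs{\chi}_\epsilon$ is isomorphic, up to the standard involution $g \mapsto g^{-1}$, to the compactly induced module $\Ind_{\Gal_{L_\infty}}^{\Gal_{\Q_p}}\Z_p$. Writing $M(\bs{\chi}) = M(\bs{\alpha}) \otimes_{\Lambda_0} \Lambda_0(\bs{\chi}_\epsilon)$ and using the projectivity of $M$ over $\Lambda$ to promote the resulting $\Lambda_0$-linearity to $\underline{\mathbb{I}}$-linearity, Shapiro's lemma (in its continuous form) yields
\begin{equation*}
  \h^1\bigl(\Q_p, M(\bs{\chi})\bigr) \;\cong\; \varprojlim_{L'} \h^1\bigl(L', M(\bs{\alpha})\bigr),
\end{equation*}
where $L'$ ranges over finite subextensions of $L_\infty$ and the transition maps are corestrictions. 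Since $L_\infty/\Q_p$ is totally ramified and $\bs{\alpha}$ is unramified, $M(\bs{\alpha})$ remains unramified on every layer, so its cohomology splits via inflation--restriction into an unramified piece and a wildly ramified piece. Along the tower, the wild inertia of $L'$ converges to the inertia of $L_\infty$, on which $\bs{\chi}_\epsilon$ has been trivialized, so the wild contributions die in the limit, leaving
\begin{equation*}
  \varprojlim_{L'} \h^1_{\mathrm{ur}}\bigl(L', M(\bs{\alpha})\bigr) \;\cong\; \bigl(M(\bs{\alpha}) \hat\otimes_{\Z_p} \hat\Z_p^{\mathrm{ur}}\bigr)^{\Gamma_{\mathrm{ur}}},
\end{equation*}
the right-hand side being the standard Frobenius-invariant model for unramified Iwasawa cohomology along a totally ramified tower.

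The main obstacle is controlling the limit carefully, especially the vanishing of the wildly ramified contributions and the injectivity of the corestriction system. The hypothesis that the specialization of $\bs{\alpha}$ at the trivial character is not $1$ is used decisively here: it ensures that $1 - \bs{\alpha}$ is a non-zero-divisor in $\underline{\mathbb{I}}$ (in fact a unit in a neighborhood of that specialization), making the transition maps injective in the limit and the resulting module $\underline{\mathbb{I}}$-projective of the expected rank. Finally, the non-canonical identification with $M$ comes from solving $\sigma(x) = \bs{\alpha}\, x$ inside $\hat\Z_p^{\mathrm{ur}}$: once $\bs{\alpha}$ is a unit avoiding the Frobenius spectrum at the trivial character, this equation has a rank-one free solution module over $\Z_p$, and any choice of generator trivializes the Frobenius-invariant tensor product down to $M$.
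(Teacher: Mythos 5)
Your proposal takes a genuinely different route from the paper: the paper performs inflation--restriction along $\Q_p^{\mathrm{ur}}/\Q_p$ for the full module $M(\bs{\chi})$ and then invokes Ochiai's Coleman map directly over $\Q_p^{\mathrm{ur}}$, whereas you pass first through Shapiro's lemma along a totally ramified tower $L_\infty/\Q_p$ and only then decompose. The Shapiro reduction is a legitimate alternative framing of local Iwasawa cohomology. However, the argument after that point has two substantive problems.

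First, the claim that ``the wild contributions die in the limit, leaving $\varprojlim \h^1_{\mathrm{ur}}$'' is backwards. For each totally ramified layer $L'$, inflation--restriction gives
\[
0 \to \h^1\bigl(\Gamma_{\mathrm{ur}}, M(\bs{\alpha})\bigr) \to \h^1\bigl(L', M(\bs{\alpha})\bigr) \to \h^1\bigl(L'^{\mathrm{ur}}, M(\bs{\alpha})\bigr)^{\Gamma_{\mathrm{ur}}} \to 0,
\]
and the unramified subgroup $\h^1(\Gamma_{\mathrm{ur}}, M(\bs{\alpha}))$ is \emph{independent of $L'$} (since $L'/\Q_p$ is totally ramified, $\Gal(L'^{\mathrm{ur}}/L') \simeq \Gamma_{\mathrm{ur}}$ and $M(\bs{\alpha})^{I_{L'}} = M(\bs{\alpha})$). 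Restriction $\h^1_{\mathrm{ur}}(L') \to \h^1_{\mathrm{ur}}(L'')$ is then an isomorphism, so the corestriction transition map on unramified classes is multiplication by $[L'':L']$, a power of $p$. For a finitely generated $\Lambda$-module the inverse limit under multiplication by $p$ vanishes. In other words, it is the unramified piece that dies in the limit and the \emph{inertial} piece $\h^1(L'^{\mathrm{ur}}, M(\bs{\alpha}))^{\Gamma_{\mathrm{ur}}}$ that persists. Relatedly, the identification of $\varprojlim \h^1_{\mathrm{ur}}(L', M(\bs{\alpha}))$ with $\bigl(M(\bs{\alpha})\hat\otimes\hat\Z_p^{\mathrm{ur}}\bigr)^{\Gamma_{\mathrm{ur}}}$ is not a ``standard Frobenius-invariant model''; the two sides are a coinvariant and an invariant module respectively, and they are not the same thing.

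Second, even after swapping the two pieces, the identification
\[
\varprojlim_{L'} \h^1\bigl(L'^{\mathrm{ur}}, M(\bs{\alpha})\bigr)^{\Gamma_{\mathrm{ur}}} \;\cong\; \bigl(M(\bs{\alpha})\hat\otimes_{\Z_p}\hat\Z_p^{\mathrm{ur}}\bigr)^{\Gamma_{\mathrm{ur}}}
\]
is precisely Coleman's theorem in Ochiai's form \cite[Proposition~5.10]{OchiaiColemanMap}; it does not follow formally and must be cited or reproved. This is the heart of the statement and cannot be asserted as a known isomorphism. Finally, the hypothesis that $\bs{\alpha}$ is not $1$ at the trivial character is not what makes the Frobenius-invariant model abstractly isomorphic to $M$ (that holds for any unit $\bs{\alpha}$ by \cite[Lemma~3.3]{OchiaiColemanMap}); rather, it is used in the paper to kill the rank-one $\Z_p$ kernel in Ochiai's short exact sequence after tensoring with $M(\bs{\alpha})$ and taking $\Gamma_{\mathrm{ur}}$-invariants, which is a different role from the one you describe.
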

\begin{proof}
  We will work over each component of $\underline{\mathbb{I}}$ separately, so fix a tame character $\psi$ and suppress it from the notation. The group $\Gamma_\mathrm{ur}$ has $p$-cohomological dimension 1, so we have the following exact sequence
  \[
    0\to \h^1(\Gamma_\mathrm{ur},M(\bs{\chi})^{\Gal_{\Q_p^\mathrm{ur}}})\to \h^1(\Q_p,M(\bs{\chi}))\to \h^1(\Q_p^\mathrm{ur},M(\bs{\chi}))^{\Gamma_\mathrm{ur}}\to 0
  \]
  By assumption, $\epsilon$ is non-trivial and $M$ is $p$-torsion-free, so the $\Gal_{\Q_p^\mathrm{ur}}$-invariant is trivial. Moreover, over $\Q_p^\mathrm{ur}$, we can factor out the unramified character $\bs{\alpha}$. Therefore,
  \[
    \h^1(\Q_p,M(\bs{\chi}))\simeq\big(\h^1(\Q_p^\mathrm{ur},\Lambda(\bs{\chi}_\epsilon))\otimes_\Lambda M(\bs{\alpha})\big)^{\Gamma_\mathrm{ur}}
  \]
  Fix an isomorphism $\mathbf{T}\simeq\G_m^r$ such that $\epsilon$ is the first component, then as a Galois module,
  \[
    \Lambda(\bs{\chi}_\epsilon)=\Lambda_1(\chi_\mathrm{cyc})\hat{\otimes}\Lambda_{r-1}
  \]
  where $\Lambda_1\simeq\Z_p[[\Z_p]]$, and the Galois action on the second term is trivial.
  
  We now recall the Coleman map, in the form stated as \cite[Proposition 5.10]{OchiaiColemanMap}. It gives a short exact sequence
  \[
    0\to\Z_p\to \h^1(\Q_p^\mathrm{ur},\underline{\Lambda}_1(\chi_\mathrm{cyc}))\xrightarrow{\,\Log\,}\underline{\Lambda}_1\hat{\otimes}_{\Z_p}\hat{\Z}_p^\mathrm{ur}\to 0
  \]
  such that for any classical point $\eta$, the following diagram commutes
  \[
  \begin{tikzcd}
    \h^1(\Q_p^\mathrm{ur},\underline{\Lambda}_1(\chi_\mathrm{cyc}))\rar["\Log"]\dar["\mathrm{sp}_\eta"] & \underline{\Lambda_1}\hat{\otimes}_{\Z_p}\Z_p^\mathrm{ur}\dar["\mathrm{sp}_\eta"]\\
    \h^1(\Q_p^\mathrm{ur},F_\eta(\eta))\rar["\log_\eta"] & D_\dR^\mathrm{ur}(F_\eta(\eta))
  \end{tikzcd}
  \]
  We need to define $\log_\eta$. Write $\eta=\phi\chi_\mathrm{cyc}^w$, where $\phi$ has finite order and $w\geq 1$ is the weight of $\eta$. Let $p^s$ be the conductor of $\phi$, then
  \[
    \log_\eta(z):=\left(\frac{\sigma^{-1}}{p^{w-1}}\right)^s\left(1-\frac{p^{w-1}\phi(p)}{\sigma^{-1}}\right)\left(1-\frac{\sigma^{-1}\phi(p)}{p^w}\right)\frac{(-1)^{w-1}}{(w-1)!}\log_\mathrm{BK}(z)
  \]
  The $\sigma$ here represents the geometric Frobenius acting on $\Q_p^\mathrm{ur}$, which explains the presence of inverses. 
  
  To complete the proof, it remains to observe that the kernel term only shows up if the tame character $\psi$ is trivial. In this case,
  \[
    (\Z_p\otimes_\Lambda M(\bs{\alpha}))^{\Gamma_{\mathrm{ur}}}=0
  \]
  since $M$ is projective over $\Lambda$, and the reduction of $\bs{\alpha}$ is not trivial. It follows that we have an isomorphism
  \[
    \h^1(\Q_p,M(\bs{\chi}))\simeq(M(\bs{\alpha})\hat{\otimes}_{\Z_p}\hat{\Z}_p^\mathrm{ur})^{\Gamma_\mathrm{ur}}
  \]
  Finally, the proof of \cite[Lemma 3.3]{OchiaiColemanMap} gives an isomorphism of abelian groups between the right hand side and $M$. It is clear from the construction that this preserves the $\mathbb{I}$-module structure.
\end{proof}

\section{Selmer groups and main conjectures}\label{sec:Main}
This is the main technical part of the paper. In a conjugate self-dual situation, we will propose two types of abstract Iwasawa main conjectures. Using a formal computation in Galois cohomology, we will prove their equivalence, assuming the existence of certain ``special elements''. This will be applied in \S\ref{sec:Guess} to bridge between the Iwasawa main conjectures stated there.

\subsection{Ordinary family}{
\newcommand{\an}{\llcorner}
\newcommand{\bn}{\urcorner}
\newcommand{\ad}{{\urcorner}}
\newcommand{\bd}{{\llcorner}}
\renewcommand{\p}{w}

In this section, fix a tame character $\psi$, and hence a component $\Lambda$ of $\underline{\Lambda}$. Let $\bs{T}$ be a conjugate self-dual $p$-adic family of representations. Fix a $p$-adic place $v$ of $\sh{F}$. Let $\p$ be the place of $\CM$ above $v$ which lies in $\Sigma_p^+$. We impose the following condition.
\begin{equation}\tag{$\mathrm{ord}_v$}
  \parbox{\dimexpr\linewidth-5em}
  {There exists two $\Gal_{\CM_\p}$-stable $\Lambda$-projective submodules $\bs{T}^\an\subseteq\bs{T}^{\bn}\subseteq\bs{T}$ such that $\bs{T}^\bn/\bs{T}^\an$ has rank one over $\mathbb{I}$, and its Galois action given by a character of the form $\bs{\alpha}\bs{\chi}_\epsilon$ for some non-trivial cocharacter $\epsilon$.
  }
\end{equation}
Using the isomorphism $\bs{T}^\con\simeq\bs{T}^\vee(1)$, we can dualize the two submodules to obtain a sequence of $\Gal_{\CM_{\bar{w}}}$-stable submodules $\bs{T}^{\con,\ad}\subseteq\bs{T}^{\con,\bd}\subseteq\bs{T}^\con$, so for example, $\bs{T}^{\con,\ad}$ is the transfer of the annihilator of $\bs{T}^{\bn}$ in $\bs{T}^\vee$. We view them as $\Gal_{\CM_{\bar w}}$-stable submodules of $\bs{T}$. The action of $\Gal_{\CM_{\bar{w}}}$ on $\bs{T}^{\con,\bd}/\bs{T}^{\con,\ad}$ is given by $\bs{\alpha}^{-1}\bs{\chi}_{-\epsilon}\chi_\cyc$.

\begin{remark}
  The notations used here is supposed to represent the following picture.
  \begin{center}
  \begin{tikzpicture}[scale=0.4]
    \draw [fill=gray] (0,0) rectangle (2,2);
    \draw (0,0) grid (2,3);
    \draw [->] (3,1.5) -- (4,1.5);
    \begin{scope}[xshift=5cm]
	    \draw [fill=gray] (0,0) rectangle (1,2);
	    \draw [fill=gray] (1,0) rectangle (2,1);
	    \draw (0,0) grid (2,3);
    \end{scope}
  \end{tikzpicture}
  \end{center}
  We think of $\bs{T}$ as the whole box, $\bs{T}^\bn$ as the filled in subspace in the first diagram, and $\bs{T}^\an$ as the one for the second diagram, removing a single $\mathbb{I}$-rank-1 piece. The notations on the dual space are chosen so that the Selmer conditions $(\an,\bd)$ and $(\bn,\ad)$ we define later are self-dual.
\end{remark}

The Coleman map gives the following result on the local cohomology of $\bs{T}^\bn/\bs{T}^\an$.
\begin{cor}\label{cor:Coleman}
  There is a non-canonical isomorphism $\RG_\mathrm{cont}(\CM_\p,\bs{T}^\bn/\bs{T}^\an)\simeq(\bs{T}^\bn/\bs{T}^\an)[1]$ in $D^b(\mathbf{psMod}_\Lambda)$.
\end{cor}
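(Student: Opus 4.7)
Write $M := \bs{T}^\bn/\bs{T}^\an$; by~(ord$_v$) this is an $\mathbb{I}$-module of rank one on which $\Gal_{\CM_\p}$ acts through the character $\bs{\chi}:=\bs{\alpha}\bs{\chi}_\epsilon$. Assuming one can verify the preliminary purity fact that $M$ is itself $\Lambda$-projective, so that Theorem~\ref{thm:Coleman} is directly applicable, the plan is to establish
\[
  \h^0(\CM_\p,M)=0,\qquad \h^1(\CM_\p,M)\simeq M,\qquad \h^2(\CM_\p,M)=0,
\]
from which $\RG_\mathrm{cont}(\CM_\p,M)\simeq M[1]$ holds already in $D^b(\mathbf{Mod}_\Lambda)$, a fortiori in $D^b(\mathbf{psMod}_\Lambda)$.

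The middle cohomology is immediate from Theorem~\ref{thm:Coleman}. For $\h^0$, I would use that $\epsilon$ is a non-trivial cocharacter: by the construction of $\bs{\chi}_\epsilon$ in~\eqref{eqn:TautChar}, its restriction to the local inertia subgroup is non-trivial, so one may choose $\tau$ in inertia with $\bs{\chi}_\epsilon(\tau)-1$ a non-zero element of the regular local ring $\Lambda$, hence a non-zero-divisor. Since $\bs{\alpha}$ is unramified, $\bs{\chi}(\tau)-1=\bs{\chi}_\epsilon(\tau)-1$, and $\Lambda$-projectivity of $M$ ensures that multiplication by this element is injective on $M$; no non-zero element of $M$ can then be $\tau$-fixed.

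For $\h^2$ I would invoke Nekov\'a\v{r}'s local Tate duality in the $\Lambda$-admissible setting (condition~5.2.3(B) of \cite{NekovarSC}), which identifies $\h^2(\CM_\p,M)$ up to Matlis dualization with $\h^0(\CM_\p,M^\vee(1))$. The Galois action on $M^\vee(1)$ is by $\bs{\alpha}^{-1}\bs{\chi}_{-\epsilon}\chi_\cyc$, whose restriction to inertia remains non-trivial (the piece $\bs{\chi}_{-\epsilon}\chi_\cyc$ is non-trivial since $-\epsilon\neq 0$), and $M^\vee$ is once again $\Lambda$-projective; the $\h^0$ argument then applies verbatim to give $\h^0(\CM_\p,M^\vee(1))=0$, and hence $\h^2(\CM_\p,M)=0$.

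The main obstacle I expect is the preliminary $\Lambda$-projectivity of $M$: (ord$_v$) asserts $\Lambda$-projectivity only for $\bs{T}^\an$ and $\bs{T}^\bn$, while rank-one-over-$\mathbb{I}$ of the quotient is not quite the same as $\Lambda$-projectivity and requires an additional argument (e.g.\ deducing $\Lambda$-flatness from the shape of the flag and then using that finitely generated and flat over a local ring is free). Once this is in hand, the remainder is essentially formal: two applications of the same inertia computation combined with one invocation each of the Coleman map and of local duality.
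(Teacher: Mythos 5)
Your computation of $\h^0$ is correct and agrees with the paper: since $\epsilon$ is non-trivial, the universal character $\bs{\chi}_\epsilon$ is non-trivial on inertia with $\bs{\chi}_\epsilon(\tau)-1$ a non-zero-divisor, and $\Lambda$-projectivity of $M$ (hence freeness, as $\Lambda$ is local) kills the invariants.

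The claim that $\h^2(\CM_\p,M)=0$, however, is too strong, and your argument for it does not go through. You invoke ``local Tate duality in the $\Lambda$-admissible setting (condition~5.2.3(B))'' and then say it ``identifies $\h^2$ up to Matlis dualization with $\h^0(M^\vee(1))$.'' This conflates the two dualities Nekov\'a\v{r} sets up. Condition 5.2.3(B) gives a \emph{derived} $\Lambda$-linear duality $\RG_\cont(\CM_\p,M^\vee(1))\simeq\RHom_\Lambda(\RG_\cont(\CM_\p,M),\Lambda)[-2]$. This is a duality between complexes, not a degree-by-degree identification of cohomology groups: extracting $\h^2$ from the right-hand side produces a spectral sequence involving $\Ext^i_\Lambda(\h^j(\CM_\p,M),\Lambda)$. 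Knowing $\h^0(\CM_\p,M^\vee(1))=0$ from your inertia argument only forces $\Hom_\Lambda(\h^2(\CM_\p,M),\Lambda)=0$, i.e.\ $\h^2$ is a torsion $\Lambda$-module; it does not force $\h^2=0$. If instead you meant the Pontryagin-style duality of 5.2.3(A), the relevant $\h^0$ would be that of $M^\vee(1)\otimes_\Lambda\Lambda^\ast$, a discrete torsion module on which your non-zero-divisor argument does not apply (multiplication by a non-unit of $\Lambda$ typically has non-trivial kernel on $\Lambda^\ast$), so you would not obtain vanishing there either. The fact that the corollary is stated in $D^b(\mathbf{psMod}_\Lambda)$ rather than $D^b(\mathbf{Mod}_\Lambda)$ is not a triviality: $\h^2$ really can fail to vanish.

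What the paper actually proves, and what your argument misses, is that $\h^2(\CM_\p,M)$ is \emph{pseudo-null}. It uses the $\RHom_\Lambda$ duality, observes that the spectral sequence degenerates because $\h^0=0$ and $\h^1\cong M$ is $\Lambda$-projective (hence has no higher $\Ext$), so that $\h^2(\CM_\p,M^\vee(1))=\Ext^2_\Lambda(\h^2(\CM_\p,M),\Lambda)$ with all other $\Ext^i$ vanishing; then it localizes at a height-one prime $P$, where $\Lambda_P$ is a DVR and so $\Ext^i_{\Lambda_P}$ vanishes for $i\geq 2$, giving pseudo-nullity after exchanging $M$ and $M^\vee(1)$. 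This regularity step is the essential ingredient absent from your proposal. Your concern about $\Lambda$-projectivity of $M=\bs{T}^\bn/\bs{T}^\an$ is fair as a reading of $(\mathrm{ord}_v)$, but the paper implicitly takes this as given (it is built into the applications via the filtration with projective graded pieces), so it is not the main issue.
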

\begin{proof}
  Given Theorem~\ref{thm:Coleman}, the claim follows from the statement that $\h^i(\CM_\p,M)$ is pseudo-null if $i=0,2$, where $M$ is an $\mathbb{I}$-module with $\Gal_{\CM_\p}$ acting through a character of the type specified. The case of $i=0$ holds since $\epsilon\neq 1$ and $\bs{T}$ is $\Lambda$-projective. By the same reason, it holds for $M^\vee(1)$. We prove the $i=2$ part for $M$ and $M^\vee(1)$ at the same time.
  
  By local duality in the form of \cite[Theorem 5.2.6]{NekovarSC}, we have an isomorphism $\RG_\cont(\CM_\p,M^\vee(1))\simeq\RHom(\RG_\cont(\CM_\p,M),\Lambda)[2]$. After applying the known values for degrees 0 and 1, the spectral sequence computing the right hand side degenerates to
  \[
    \Ext^i_\Lambda(\h^2(\CM_\p,M),\Lambda)=\begin{cases}\h^2(\CM_\p,M^\vee(1)) & i=2\\0 & \text{otherwise}\end{cases}
  \]
  and similarly with the roles of $M$ and $M^\vee(1)$ exchanged. When localized at a height one prime, the left hand side vanishes for $i\geq 2$ since $\Lambda$ is regular. This proves the required conclusion.
\end{proof}

\subsection{Selmer groups and complexes}\label{sec:Selmer}
In this section, we recall various general facts about Selmer groups and Selmer complexes, in particular setting up notations for later use. Fix a finite set $S$ of finite places of $\CM$ such that $\bs{T}$ is unramified away from $S$ and $\infty$. Suppose also $S$ contains all places above $p$.
\begin{definition}
  Let $\sh{F}$ be a Selmer structure for $\bs{T}$, namely a choice of a subspace $\sh{F}_v\subseteq \h^1(K_v,\bs{T})$ for each $v\in S$. The compact Selmer group is
  \[
    \h^1_{\sh{F}}(\CM,\bs{T}):=\ker\left(\h^1(\CM,\bs{T})\to\bigoplus_v \h^1(\CM_v,\bs{T})/\sh{F}_v\right)
  \]
  Let $\sh{F}_v'$ be the image of $\sh{F}_v$ under the map $\h^1(\CM_v,\bs{T})\to \h^1(\CM_v,\bs{T}\otimes_\Lambda\Lambda^*)$. The Greenberg Selmer group is
  \[
    \operatorname{Sel}_{\sh{F}}(\CM,\bs{T}):=\ker\left(\h^1(\CM,\bs{T}\otimes_\Lambda\Lambda^*)\to\bigoplus_v \h^1(\CM_v,\bs{T}\otimes_\Lambda\Lambda^*)/\sh{F}_v'\right)
  \]
  Its Pontryagin dual will be denoted by $X_{\sh{F}}(\CM,\bs{T})$. 
  
  Fix a choice of Selmer conditions at the places above $p$ which are not $w$ or $\bar{w}$. At $w$, we will use $\spadesuit\in\{\an,\bn\}$ to denote the (strict) Greenberg-type Selmer condition
  \[
    \sh{F}^\p_\spadesuit=\ker(\h^1(\CM_\p,\bs{T})\to \h^1(\CM_\p,\bs{T}/\bs{T}^\spadesuit))
  \]
  The same symbols $\{\ad,\bd\}$ will also be used to denote the dual Selmer conditions at $\bar\p$. Unless otherwise stated, the Selmer condition at a place away from $p$ is the unramified subspace $\h^1(\F_v,\bs{T}^{I_v})$, viewed as a subspace of $\h^1(\CM_v,\bs{T})$ by inflation. Therefore, we will use variations of the notation $\h^1_{\spadesuit,\clubsuit}$ to denote $\spadesuit$-condition at $\p$, $\clubsuit$-condition at $\bar{\p}$, and the unramified condition everywhere else.
\end{definition}

\begin{definition}
  A local condition $\Delta_v$ is a map of complexes $U^+_v\to C_\cont^\bullet(\Gal_{\CM_v},\bs{T})$. Given a collection $\Delta=(\Delta_v)_{v\in S}$, the associated Selmer complex is
  \[
    \widetilde{\RG}_\Delta(\CM,\bs{T}):=\operatorname{Cone}\left(C_\cont^\bullet(\Gal_{\CM,S},\bs{T})\oplus\bigoplus_{v\in S}U^+_v\xrightarrow{\,\res_v-\Delta_v\,}\bigoplus_{v\in S}C_\cont^\bullet(\Gal_{\CM_v},\bs{T})\right)[-1]
  \]
  Its cohomology groups will be denoted by $\widetilde{\h}^\bullet_\Delta(\CM,\bs{T})$.
  
  Similarly as before, for $\spadesuit\in\{\an,\bn\}$, we can define the local condition
  \begin{equation}\label{eqn:GreenbergComplex}
    \Delta_\spadesuit^{\p}:C_\cont^\bullet(\Gal_{\CM_\p},\bs{T}^\spadesuit)\to C_\cont^\bullet(\Gal_{\CM_\p},\bs{T})
  \end{equation}
  at $\p$, and for $\clubsuit\in\{\ad,\bd\}$ at , we can define the condition $\Delta_\clubsuit^{\bar\p}$ at $\bar\p$. Away from $p$, we use the unramified local condition $C_\cont^\bullet(\Gal_{\CM_v}/I_v,\bs{T}^{I_v})\to C_\cont^\bullet(\Gal_{\CM_v},\bs{T})$.
\end{definition}

By construction, the subspaces $\bs{T}^\an$ and $\bs{T}^{c,\bd}$ are mutual annihilators. It follows from \cite[Proposition 6.7.6]{NekovarSC} that $\Delta_\an^\p$ and $\Delta_{\bd}^{\bar{\p}}$ are orthogonal complements at the complex level. The same discussion holds with all $\an$ replaced by $\bn$.

One version of global duality can be stated cleanly in this language. We have a Pontryagin duality pairing $\bs{T}\otimes_\Lambda(\bs{T}^c\otimes_\Lambda\Lambda^*)\to\Lambda^*(1)$. From \cite[\S 7.8.4.3]{NekovarSC}, we get an isomorphism
\begin{equation}\tag{$\ast$}
  \widetilde{\RG}_{\spadesuit,\clubsuit}(\CM,\bs{T})\simeq\Hom_\cont(\widetilde{\RG}_{\clubsuit,\spadesuit}(\CM,\bs{T}\otimes_\Lambda\Lambda^*),\Q_p/\Z_p)[-3]
\end{equation}
However, we will also need a version of duality arising from the pairing $\bs{T}\otimes_\Lambda\bs{T}^c\to\Lambda(1)$. This runs into subtleties at ramified places away from $p$, since the unramified local conditions are no longer in perfect pairing, and we encounter error terms which contribute to Tamagawa numbers. More precisely, \cite[\S 7.8.4.4]{NekovarSC} gives an exact triangle
\begin{equation}\tag{$\vee$}
  \widetilde{\RG}_{\spadesuit,\clubsuit}(\CM,\bs{T})\to\RHom_\Lambda(\widetilde{\RG}_{\clubsuit,\spadesuit}(\CM,\bs{T}),\Lambda)[-3]\to\bigoplus_{v\in S-\{\p,\bar{\p}\}}\mathrm{Err}_v\to [1]
\end{equation}
The error term above is the definition in \cite[Section 6.2.3]{NekovarSC} applied to the unramified local conditions. The necessary properties for our purposes are stated in the following proposition.
\begin{prop}\label{prop:Err}
  Let $P$ be a height one prime ideal of $\Lambda$, then
  \[
    \mathrm{length}_{\Lambda_P}(\h^i(\mathrm{Err}_v)_P)=\begin{cases}
      \mathrm{Tam}_v(\bs{T},P) & i=1,2\\
      0 & \mathrm{otherwise}
    \end{cases}
  \]
  where the Tamagawa number is defined by
  \[
    \mathrm{Tam}_v(\bs{T},P)=\mathrm{length}_{\Lambda_P}(\h^1(I_v,\bs{T}_P)^{\Fr_v=1}_\mathrm{tors})
  \]
\end{prop}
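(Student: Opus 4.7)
The plan is to unwind the definition of $\mathrm{Err}_v$ from \cite[Section 6.2.3]{NekovarSC} and reduce the claim to a concrete length computation after localizing at the height-one prime $P$. Since the statement is purely about lengths over $\Lambda_P$ and $\Lambda_P$ is a discrete valuation ring (hence Gorenstein), all the finitely generated $\Lambda_P$-modules in play split canonically into a free part plus a torsion submodule of well-defined finite length, which makes everything explicit.

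First, I would recall Nekov\'a\v{r}'s construction in our specific setting: for the unramified local condition under the $\Lambda$-linear pairing $\bs{T}\otimes_\Lambda\bs{T}^\con\to\Lambda(1)$, the error term $\mathrm{Err}_v$ is essentially the cone of the natural comparison
\[
  C^\bullet_\cont(\Gal_{\CM_v}/I_v,\bs{T}^{I_v}) \to \RHom_\Lambda\bigl(C^\bullet_\cont(\Gal_{\CM_v}/I_v,(\bs{T}^\con)^{I_v}),\Lambda\bigr)[-2].
\]
Since $\Gal_{\CM_v}/I_v\simeq\hat\Z$ has $p$-cohomological dimension one, both sides are concentrated in a narrow range of degrees, and after localizing at $P$ the $\RHom$ on the right becomes an ordinary $\Lambda_P$-dual with an $\Ext^1_{\Lambda_P}$ correction supported on torsion submodules.

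Next, I would read off the cohomology of $\mathrm{Err}_v$ degree by degree. In degree $0$ the comparison map is an isomorphism onto $\bs{T}^{\Gal_{\CM_v}}$, so $\h^0(\mathrm{Err}_v)$ vanishes; in degrees $\geq 3$ the cone is zero by support. In degrees $1$ and $2$ the discrepancy is controlled by the long exact sequence for $\Fr_v-1$ acting on $\bs{T}^{I_v}_P$: the $\Lambda_P$-free parts of $\h^1(I_v,\bs{T}_P)^{\Fr_v=1}$ and its analogue for $\bs{T}^\con$ pair perfectly via the duality pairing (they give the ``generic'' Tate duality on the free part), while the $\Lambda_P$-torsion submodule, which by definition has length $\mathrm{Tam}_v(\bs{T},P)$, contributes an $\Ext^1_{\Lambda_P}$-term of the same length on the dual side. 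By Gorenstein duality over $\Lambda_P$ this $\Ext^1$-contribution is isomorphic to the torsion module itself, producing matching length contributions in degrees $1$ and $2$.

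The main obstacle is unpacking Nekov\'a\v{r}'s abstract chain-level construction of $\mathrm{Err}_v$ carefully enough to match it with the concrete picture above, and in particular verifying that the torsion submodule of $\h^1(I_v,\bs{T}_P)^{\Fr_v=1}$ contributes \emph{exactly once} in each of degrees $1$ and $2$ (rather than, say, canceling out). This requires checking that the homotopy fiber of the duality morphism at the chain level is compatible with the splitting of the cohomology of $\Fr_v-1$ into free and torsion parts; once that compatibility is in hand, the statement follows immediately by reading off $\Lambda_P$-lengths.
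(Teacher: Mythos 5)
The paper's entire proof is a one-line citation to \cite[\S 7.6.10.7]{NekovarSC}, which carries out exactly this cohomology computation for the unramified error term when the base ring is regular (so that one can pass to the DVR $\Lambda_P$ and split modules into free plus torsion). Your proposal is an attempt to re-derive Nekov\'a\v{r}'s lemma from the definitions. The outline is in the right spirit — degrees $0$ and $\geq 3$ vanish for support reasons, degrees $1$ and $2$ pick up the torsion of $\h^1(I_v,\bs{T}_P)^{\Fr_v=1}$ — but as written it has a self-acknowledged gap that is in fact the whole content of the lemma. The ``main obstacle'' you flag, namely verifying at the chain level that the torsion contributes exactly once to each of $\h^1(\mathrm{Err}_v)$ and $\h^2(\mathrm{Err}_v)$ and does not cancel, is precisely what \cite[\S 7.6.10.7]{NekovarSC} establishes; until that step is carried out, the proposal is a plan rather than a proof.

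A further caution: the identification of $\mathrm{Err}_v$ as ``essentially the cone'' of the comparison map
\[
  C^\bullet_\cont\bigl(\Gal_{\CM_v}/I_v,\bs{T}^{I_v}\bigr) \longrightarrow \RHom_\Lambda\bigl(C^\bullet_\cont(\Gal_{\CM_v}/I_v,(\bs{T}^\con)^{I_v}),\Lambda\bigr)[-2]
\]
is looser than Nekov\'a\v{r}'s actual definition. In \cite[\S 6.2.3]{NekovarSC} the error term is built from the failure of a cup-product pairing at the level of complexes to identify the chosen local condition with the orthogonal complement of its dual partner; it is not a priori the cone you wrote down, and conflating the two is exactly where a careless count of the torsion contributions could go wrong. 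So either you reproduce Nekov\'a\v{r}'s construction and degree-by-degree length count explicitly (in which case you have done the work that the citation outsources), or you do what the paper does and simply invoke \cite[\S 7.6.10.7]{NekovarSC}, remembering to note that the hypothesis needed there is the regularity of $\Lambda$.
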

\begin{proof}
  This follows from \cite[\S 7.6.10.7]{NekovarSC} since $\Lambda$ is regular. The notation $\mathrm{Err}_v^\mathrm{ur}(\mathscr{D},T)$ used there is exactly our $\mathrm{Err}_v$.
\end{proof}
In particular, the error terms are 0 when localized at a minimal prime, so it does not enter into rank calculations. In particular, the next proposition, usually credited to Greenberg or Wiles, follows easily from global duality and the Euler--Poincar\'e characteristic formula. We include the proof for completeness.
\begin{prop}\label{prop:Wiles}
  Let $R$ be a Noetherian regular local domain. Let $V$ be a projective $R$-module with an admissible action of $\Gal_\CM$, and let $V^\vee(1)=\Hom_R(V,R(1))$. Suppose $V^{\Gal_\CM}=(V^\vee(1))^{\Gal_\CM}=0$.
  
  For each $v\in S$, choose a $\Gal_{\CM_v}$-stable subspace $V_v\subseteq V$ and define the local condition $\Delta_v$ to be the Greenberg condition corresponding to it. Let $\Delta^\vee$ be the local condition for $V^\vee(1)$ which is the orthogonal complement to $\Delta$, then
  \[
    \rank_R\widetilde{\h}^1_{\Delta}(\Gal_{\CM,S},V)-\rank_R\widetilde{\h}^1_{\Delta^\vee}(\Gal_{\CM,S},V^\vee(1))=-\rank_R V+\sum_{v\in S}[\CM_v:\Q_v]\rank_R V_v
  \]
\end{prop}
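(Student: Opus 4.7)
My plan is to reduce the statement to a dimension count over the fraction field $K = \Frac(R)$, since for a finitely generated $R$-module $M$ one has $\rank_R M = \dim_K(M \otimes_R K)$. Over a field the Selmer complexes become perfect complexes of finite-dimensional vector spaces, and the identity will follow from the Euler--Poincar\'{e} characteristic formula plus the Selmer complex global duality $(\vee)$ stated before the proposition.

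For the Euler characteristic, I would apply $\chi$ to the defining exact triangle
\[
\widetilde{\RG}_\Delta(\CM,V) \to \RG(\Gal_{\CM,S},V) \oplus \bigoplus_{v\in S} U_v^+ \to \bigoplus_{v\in S} \RG(\Gal_{\CM_v},V),
\]
so that $\chi(\widetilde{\RG}_\Delta) = \chi(\RG_S(V)) + \sum_v \chi(U_v^+) - \sum_v \chi(\RG(\CM_v,V))$. Tate's global formula gives $\chi(\RG_S(V)) = 0$ since $\CM$ is totally imaginary. For the Greenberg condition $U_v^+ = \RG(\CM_v, V_v)$, Tate's local formula gives $\chi(\RG(\CM_v, V)) = -[\CM_v:\Q_p]\rank V$ and $\chi(U_v^+) = -[\CM_v:\Q_p]\rank V_v$ when $v \mid p$, and $0$ when $v \nmid p$ (so places away from $p$ contribute nothing). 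This reduces the Euler characteristic to a sum of local degree contributions of the form $[\CM_v:\Q_v]\rank V_v$ at the $p$-adic places.

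Next I would kill off $\widetilde{\h}^0$ and $\widetilde{\h}^3$ over $K$. The long exact sequence of the Selmer complex embeds $\widetilde{\h}^0_\Delta(V)$ into $H^0(\Gal_{\CM,S},V) = V^{\Gal_\CM} = 0$ (using that $V_v^{\Gal_{\CM_v}} \hookrightarrow V^{\Gal_{\CM_v}}$ for Greenberg subspaces), which handles degree $0$. For degree $3$ I would invoke the duality triangle $(\vee)$: Proposition~\ref{prop:Err} says the error terms $\mathrm{Err}_v$ are $R$-torsion (their lengths at any height-one prime are local Tamagawa numbers), so after $-\otimes_R K$ the triangle collapses to a perfect duality, giving $\rank \widetilde{\h}^i_\Delta(V) = \rank \widetilde{\h}^{3-i}_{\Delta^\vee}(V^\vee(1))$. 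The $i=0$ instance forces $\rank \widetilde{\h}^3_\Delta(V) = 0$ using the second hypothesis. Combining the Euler--Poincar\'{e} identity $\chi(\widetilde{\RG}_\Delta) = -\rank \widetilde{\h}^1_\Delta(V) + \rank \widetilde{\h}^2_\Delta(V)$ with the $i=2$ instance $\rank \widetilde{\h}^2_\Delta(V) = \rank \widetilde{\h}^1_{\Delta^\vee}(V^\vee(1))$ then yields the stated formula. The main technical point is precisely the interplay in step three: the Selmer complex duality is clean only up to the error terms, and one must use Proposition~\ref{prop:Err} to observe they vanish at the generic point of $R$ before one is allowed to read off the rank identity. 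Everything else is bookkeeping inside Nekov\'{a}\v{r}'s formalism.
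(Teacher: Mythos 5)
Your overall plan is essentially the paper's: compute the Euler--Poincar\'{e} characteristic of the Selmer complex, then use the hypotheses $V^{\Gal_\CM}=(V^\vee(1))^{\Gal_\CM}=0$ to kill $\widetilde{\h}^0$, and global duality $(\vee)$ together with Proposition~\ref{prop:Err} (error terms pseudo-null, hence invisible over $\Frac(R)$) to kill $\widetilde{\h}^3$ and identify $\rank\widetilde{\h}^2_\Delta = \rank\widetilde{\h}^1_{\Delta^\vee}$. Your Euler characteristic decomposition via $\RG_S$ and $\bigoplus_v \RG(\CM_v,\cdot)$ directly, instead of via the paper's triangle $\widetilde{\RG}_{\mathtt{str}}\to\widetilde{\RG}_\Delta\to\bigoplus U_v^+$, is a harmless rearrangement of the same calculation. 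Steps three and four are fine.

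However, the Euler characteristic step has a genuine error. Tate's global formula does \emph{not} give $\chi(\RG(\Gal_{\CM,S},V))=0$ for $\CM$ totally imaginary. The global formula over a number field $F$ is
\[
  \chi(\RG(G_{F,S},V)) = \sum_{v\mid\infty}\bigl(\dim V^{G_{F_v}} - [F_v:\R]\dim V\bigr),
\]
and a complex place contributes $\dim V - 2\dim V = -\dim V$, not $0$. The vanishing you are thinking of happens only for the $\pm 1$-eigenspace contribution at real places; there is still a degree term. For $\CM$ a CM field of degree $2g$ one gets $\chi(\RG(\Gal_{\CM,S},V)) = -g\rank_R V$. (Sanity check: for $\CM$ imaginary quadratic and $V=\Q_p$ trivial, $h^0=1$, $h^1=2$ by Leopoldt, $h^2=0$, so $\chi=-1\neq 0$.) Carrying the incorrect $\chi(\RG_S)=0$ through your chain yields $\chi(\widetilde{\RG}_\Delta)=[\CM:\Q]\rank_R V-\sum_{v|p}[\CM_v:\Q_p]\rank_R V_v$, and hence a rank identity that differs from the stated one by $(2g-1)\rank_R V$. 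Inserting the correct $-g\rank_R V$ instead gives $\chi(\widetilde{\RG}_\Delta)=g\rank_R V-\sum_{v|p}[\CM_v:\Q_p]\rank_R V_v$, which is exactly the value $\chi(\widetilde{\RG}_{\mathtt{str}})+\sum_v\chi(U_v^+)$ that the paper obtains from Nekov\'{a}\v{r}'s compactly-supported Euler characteristic formula, and reproduces the formula in the proposition when $g=1$.
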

\begin{proof}
  For each $v\in S$, we have $U_v^+=C^\bullet_\cont(\Gal_{\CM_v},V_v)$. Let $\mathtt{str}$ denote the strict local condition where $U_v^+=0$ for each $v\in S$, then $\widetilde{\RG}_\mathtt{str}$ computes the compactly supported Galois cohomology in the sense of \cite[Definition 5.3.1.1]{NekovarSC}, and we have the following exact triangle
  \[
    \widetilde{\RG}_\mathtt{str}\to\widetilde{\RG}_\Delta\to\bigoplus_{v\in S}U_v^+\to [1]
  \]
  If $\chi(-)$ denotes the Euler--Poincar\'e characteristics, then we get
  \[
    \chi(\widetilde{\RG}_\Delta)=\chi(\widetilde{\RG}_\mathtt{str})+\sum_{v\in S}\chi(U_v^+)=\rank_R V-\sum_{v\in S}[\CM_v:\Q_v]\rank_R V_v
  \]
  where we have applied Theorems 5.3.6 and 5.2.11 of \cite{NekovarSC} to each of the two terms. Moreover, equation ($\vee$) and the remark after it shows that $\rank_R\widetilde{\h}^i_\Delta=\rank_R\widetilde{\h}^{3-i}_{\Delta^\vee}$. In particular,
  \begin{align*}
    \rank_R\widetilde{\h}^1_\Delta-\rank_R\widetilde{\h}^1_{\Delta^\vee}&=\rank_R\widetilde{\h}^1_\Delta-\rank_R\widetilde{\h}^2_\Delta\\
    &=-\chi(\widetilde{\RG}_\Delta)+\rank_R \widetilde{\h}^0_\Delta
  \end{align*}
  Finally, observe that $\widetilde{\h}^0_\Delta(\Gal_{\CM,S},V)\subseteq V^{\Gal_\CM}=0$, so the two error terms are 0.
\end{proof}

In what follows, we will typically drop the $(\CM,\bs{T})$ from the notation when no confusion arises this way. We now study some basic properties of Selmer groups and Selmer complexes. In our applications, we will typically have the following mild hypothesis
\begin{equation}\tag{no-pole}
  \bs{T}^{\Gal_\CM}=0,\text{ and } \bs{T}_{\Gal_\CM}\text{ is pseudo-null}
\end{equation}
Both statements will usually hold because the Galois action is sufficiently non-trivial. Assuming this, it is immediately seen that $\widetilde{\h}^0_{\spadesuit,\clubsuit}=0$, and ($\ast$) then implies $\widetilde{\h}^3_{\spadesuit,\clubsuit}$ is pseudo-null. They are also related to torsions in $\widetilde{\h}^1$, as shown in the next proposition.
\begin{prop}\label{prop:Tors}
  If (no-pole) holds, then the torsion subgroup of $\widetilde{\h}^1_{\spadesuit,\clubsuit}(\CM,\bs{T})$ is pseudo-null.
\end{prop}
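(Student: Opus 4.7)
The plan is to reduce the claim to showing that for every height-one prime $P\subset\Lambda$ the localization $\widetilde{\h}^1_{\spadesuit,\clubsuit}(\CM,\bs{T})_P$ is torsion-free over the DVR $\Lambda_P$, since that is equivalent to the torsion submodule being pseudo-null as an object of $\mathbf{psMod}_\Lambda$.

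The main input will be the duality triangle $(\vee)$. Localizing at $P$ and reading off the degree-$1$ part of the long exact sequence yields
\[
  \h^0(\mathrm{Err})_P \to \widetilde{\h}^1_{\spadesuit,\clubsuit,P} \to \h^{-2}\!\bigl(\RHom_{\Lambda_P}(\widetilde{\RG}_{\clubsuit,\spadesuit,P},\Lambda_P)\bigr) \to \h^1(\mathrm{Err})_P.
\]
By Proposition~\ref{prop:Err}, the leftmost term vanishes (the nontrivial contributions of $\mathrm{Err}_v$ are concentrated in degrees $1$ and $2$), so $\widetilde{\h}^1_{\spadesuit,\clubsuit,P}$ embeds into the $\RHom$-group, and it suffices to prove the latter is torsion-free.

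For that, I will apply the hyperext spectral sequence
\[
  E_2^{p,q}=\Ext^p_{\Lambda_P}(\widetilde{\h}^{-q}_{\clubsuit,\spadesuit,P},\Lambda_P)\Rightarrow \Ext^{p+q}_{\Lambda_P}(\widetilde{\RG}_{\clubsuit,\spadesuit,P},\Lambda_P).
\]
Because $\Lambda_P$ is a DVR only the columns $p\in\{0,1\}$ are nonzero; because the Selmer complex has cohomology in degrees $0,1,2,3$ only the rows $-q\in\{0,1,2,3\}$ contribute. For total degree $-2$ this leaves exactly the two cells $(p,q)=(0,-2)$ and $(1,-3)$; every $d_r$-differential with $r\geq 2$ on these cells lands in zero by the same column/range constraints, so the sequence degenerates at $E_2$ to a short exact sequence with sub $\Ext^1_{\Lambda_P}(\widetilde{\h}^3_{\clubsuit,\spadesuit,P},\Lambda_P)$ and quotient $\Hom_{\Lambda_P}(\widetilde{\h}^2_{\clubsuit,\spadesuit,P},\Lambda_P)$. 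The text's argument showing $\widetilde{\h}^3_{\spadesuit,\clubsuit}$ is pseudo-null under (no-pole) applies symmetrically to $\widetilde{\h}^3_{\clubsuit,\spadesuit}$, so the sub vanishes at $P$ and the $\RHom$-group becomes the $\Lambda_P$-dual of a finitely generated module, hence free over the DVR.

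Putting the pieces together, $\widetilde{\h}^1_{\spadesuit,\clubsuit,P}$ embeds into a free $\Lambda_P$-module and is therefore torsion-free, which completes the reduction. The only step that deserves real scrutiny is the degeneration of the spectral sequence into the advertised short exact sequence, and this is routine given that $\Lambda_P$ has global dimension one and the Selmer complex has a bounded cohomological range. The conceptual heart of the argument is that pseudo-nullness of the ``dual'' $\widetilde{\h}^3$ on both sides, handed to us by (no-pole), is precisely what collapses the duality computation onto a torsion-free target.
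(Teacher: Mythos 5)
Your proof is correct and follows essentially the same route as the paper: you use the duality triangle $(\vee)$, the vanishing of $\mathrm{Err}$ in degree $0$ from Proposition~\ref{prop:Err}, and the pseudo-nullness of $\widetilde{\h}^3_{\clubsuit,\spadesuit}$ coming from (no-pole) via $(\ast)$, concluding that $\widetilde{\h}^1_{\spadesuit,\clubsuit}$ injects into a torsion-free module up to pseudo-null ambiguity. The only cosmetic difference is that you localize at a height-one prime $P$ at the outset so that the hyperext spectral sequence runs over the DVR $\Lambda_P$ and trivially has two columns, whereas the paper runs the spectral sequence over $\Lambda$ and invokes the codimension estimate $\Ext^i_\Lambda(M,\Lambda)=0$ for $i<\codim M$ to kill $\Ext^1_\Lambda(\widetilde{\h}^3_{\clubsuit,\spadesuit},\Lambda)$; these are interchangeable bookkeeping choices leading to the same conclusion.
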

\begin{proof}
  In equation ($\vee$), let $A^1$ be the first cohomology of the middle term, then a spectral sequence argument plus the fact that $\Hom_\Lambda(-,\Lambda)$ is always torsion-free shows that $(A^1)_\tors=\Ext^1_\Lambda(\widetilde{\h}^3_{\clubsuit,\spadesuit}(\CM,\bs{T}),\Lambda)$. By the discussion before this proposition, the first term inside $\Ext^1_\Lambda$ is pseudo-null, so it has codimension at least 2. Recall from homological algebra that $\Ext^i_\Lambda(M,\Lambda)=0$ if $i<\codim(M)$ \cite[Theorem 9.1.3(iii)]{NekovarSC}. Therefore, $(A^1)_\mathrm{tors}=0$. Finally, the long exact sequence attached to ($\vee$) has the form
  \[
    A^0\to\mathrm{Err}^0\to\widetilde{\h}^1_{\spadesuit,\clubsuit}(\CM,\bs{T})\to A^1
  \]
  By Proposition~\ref{prop:Err}, $\mathrm{Err}^0$ is pseudo-null, and the desired result follows.
\end{proof}
For each $v$, let $U_v^-=\operatorname{Cone}(-\Delta_v)$, then by definition, we have an exact triangle
\[
  \widetilde{\RG}_{\spadesuit,\clubsuit}\to\RG(\Gal_{\CM,S},\bs{T})\to\bigoplus_{v\in S}U_v^-\to[1]
\]
If $\Delta_v$ is the unramified local condition, then $U_v^-$ is concentrated in degrees $[1,2]$. If $\Delta_v$ is the Greenberg local condition defined by $\bs{T}^?$, then $U_v^-=C_\cont^\bullet(\Gal_{\CM_v},\bs{T}/\bs{T}^?)$, so it is concentrated in degrees $[0,2]$. Taking cohomology gives the following exact sequences: for $\widetilde{\h}^0$
\[
  0\to\widetilde{\h}^0_{\spadesuit,\clubsuit}\to\bs{T}^{\Gal_\CM}\to\big(\bs{T}/\bs{T}^\spadesuit\big)^{\Gal_{\CM_\p}}\oplus (\bs{T}/\bs{T}^\clubsuit\big)^{\Gal_{\CM_{\bar\p}}}
\]
and for $\widetilde{\h}^1$
\[
  \big(\bs{T}/\bs{T}^\spadesuit\big)^{\Gal_{\CM_\p}}\oplus (\bs{T}/\bs{T}^\clubsuit\big)^{\Gal_{\CM_{\bar\p}}}\to\widetilde{\h}^1_{\spadesuit,\clubsuit}\to \h^1_{\spadesuit,\clubsuit}\to 0
\]
To study $\widetilde{\h}^2$, we use global duality in the form of equation ($\ast$) to obtain the following exact sequence
\[
  0\to X_{\clubsuit,\spadesuit}\to\widetilde{\h}^2_{\spadesuit,\clubsuit}\to \h^2(\CM_\p,\bs{T}^\spadesuit)\oplus \h^2(\CM_{\bar\p},\bs{T}^\clubsuit)
\]
The error terms with local invariants and co-invariants are related to trivial zeroes, cf.\ the computation in \cite[Section 0.10]{NekovarSC}. From an algebraic point of view, this is an error term which shows up in the control theorem. We will often impose the following hypothesis to rule them out at the family level.
\begin{equation}\tag{no-triv-zero}
  \big(\bs{T}/\bs{T}^\spadesuit\big)^{\Gal_{\CM_\p}}=0,\ \spadesuit\in\{\an,\bn\}
\end{equation}
Assuming it and (no-pole), we have $\widetilde{\h}^1_{\spadesuit,\clubsuit}=\h^1_{\spadesuit,\clubsuit}$, and $X_{\clubsuit,\spadesuit}=\widetilde{\h}  ^2_{\spadesuit,\clubsuit}$ up to a pseudo-null quotient. In the later applications in \S\ref{sec:Guess}, the module $\bs{T}$ will come with a bi-filtration whose graded pieces are projective $\Lambda$-modules with non-trivial Galois action, which implies (no-triv-zero). 

\subsection{Equivalence of main conjectures}
In classical Iwasawa theory, we want to study the compact $\mathbb{I}$-modules $X_{\bn,\ad}$ and $X_{\an,\bd}$ since they interpolate Bloch--Kato Selmer groups if $\bs{T}^\an$ and $\bs{T}^\bn$ are chosen appropriately. They are singled out in parts because their local conditions are self-dual. We might expect the central values of the associated $L$-functions to not vanish generically in the family we are considering, in which case a module like $X$ would be torsion, and we have an Iwasawa main conjecture equating its characteristic ideal to a $p$-adic $L$-function, cf.~\cite{GreenbergMotives}.

In the case which inspired the current axiomatization, there is a further feature, namely the $L$-functions are self-dual, and the sign of the function equation is constant in family. In the sign $-1$ case, the central values vanish automatically, and we instead expect $X$ to have rank 1. This is the case Perrin-Riou considered in \cite{PRHeegnerPoint}, leading to her Heegner point main conjecture.

In our setting, $X_{\an,\bd}$ and $X_{\ad,\bn}$ will correspond to families with opposite signs. Our main result of this section is that a good global class can be used to relate the Iwasawa main conjecture and the Perrin-Riou main conjecture. To make this precise, we need to introduce the regulator map.
\begin{definition}
  The regulator map is the following composite
  \[
    \mathrm{reg}:\widetilde{\h}^1_{\bn,\ad}\to \h^1(\CM_\p,\bs{T}^\bn)\to \h^1(\CM_\p,\bs{T}^\bn/\bs{T}^\an)\xrightarrow{\,\mathrm{Col}\,}\bs{M}
  \]
  where $\bs{M}$ is abstractly isomorphic to $\bs{T}^\bn/\bs{T}^\an$, so by (ord), it is a rank-1 $\mathbb{I}$-module which is projective over $\Lambda$.
\end{definition}

\begin{theorem}\label{thm:Equiv}
  Let $\bs{T}$ be an ordinary conjugate self-dual family of Galois representations. Suppose there exists an element $\bs{z}\in\widetilde{\h}^1_{\bn,\ad}$ such that
  \begin{enumerate}
    \item $\widetilde{\h}^1_{\bn,\ad}/\mathbb{I}\cdot\bs{z}$ is a torsion $\mathbb{I}$-module.
    \item $\mathrm{reg}(\bs{z})$ is non-torsion.
  \end{enumerate}
  then $\rank_\mathbb{I}\widetilde{\h}^2_{\bn,\ad}=1$, $\rank_\mathbb{I}\widetilde{\h}^2_{\an,\bd}=0$. Moreover, if \textup{(no-pole)} holds, then the following equivalence holds
  \[
    \Char_\Lambda\widetilde{\h}^2_{\an,\bd}=\Char_\Lambda(\mathrm{reg}(\bs{z}))^2\iff \Char_\Lambda(\widetilde{\h}^2_{\bn,\ad})_\mathrm{tors}=\Char_\Lambda\left(\frac{\widetilde{\h}^1_{\bn,\ad}}{\mathbb{I}\cdot{\bs{z}}}\right)^2
  \]
  In fact, the equalities can be replaced by either divisibility in the above equivalence.
  
  In particular, if in addition \textup{(no-triv-zero)} holds, then we can replace $\widetilde{\h}^1$ by $\h^1$ and $\widetilde{\h}^2$ by $X$ in the above statements to get an equivalence between two types of Iwasawa main conjectures.
\end{theorem}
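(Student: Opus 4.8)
The plan is to run a formal computation in Galois cohomology, organized by Selmer complexes, comparing the two self-dual Selmer setups $(\bn,\ad)$ and $(\an,\bd)$ via the rank-$1$ quotient $\bs{T}^\bn/\bs{T}^\an$ along the place $\p$ and its dual along $\bar\p$. First I would establish the rank statements. The two Selmer conditions $(\bn,\ad)$ and $(\an,\bd)$ differ only by enlarging the local submodule at $\p$ by the $\mathbb{I}$-rank-$1$ piece $\bs{T}^\bn/\bs{T}^\an$ (and correspondingly shrinking at $\bar\p$). Proposition~\ref{prop:Wiles} applied with $R=\mathbb{I}$ (valid since $\mathbb{I}$ is Gorenstein, hence its minimal localizations are fields, and the error/Tamagawa terms vanish there by Proposition~\ref{prop:Err}) gives
\[
  \rank_\mathbb{I}\widetilde{\h}^1_{\bn,\ad}-\rank_\mathbb{I}\widetilde{\h}^1_{\bn,\ad}{}^{\!\vee}=\text{(some integer)},\qquad \rank_\mathbb{I}\widetilde{\h}^1_{\an,\bd}-\rank_\mathbb{I}\widetilde{\h}^1_{\an,\bd}{}^{\!\vee}=\text{(that integer)}-2,
\]
where the shift by $2=2[\CM_\p:\Q_p]\cdot 1$ reflects the rank change of the local submodule. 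Combined with the duality $(\ast)$, which identifies $\widetilde{\h}^2_{\spadesuit,\clubsuit}$ with the Pontryagin dual of $\widetilde{\h}^1_{\clubsuit,\spadesuit}$ of the twisted module (hence equates ranks up to pseudo-null), and with the existence of $\bs{z}$ with torsion cokernel (forcing $\rank_\mathbb{I}\widetilde{\h}^1_{\bn,\ad}\geq 1$ and then, by the Euler characteristic bookkeeping, exactly $1$ on the $\widetilde{\h}^2$ side after accounting for $\widetilde{\h}^0$, $\widetilde{\h}^3$ being controlled), I would conclude $\rank_\mathbb{I}\widetilde{\h}^2_{\bn,\ad}=1$ and $\rank_\mathbb{I}\widetilde{\h}^2_{\an,\bd}=0$. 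The hypothesis that $\mathrm{reg}(\bs{z})$ is non-torsion is what pins down which of the two is the rank-$1$ side versus the torsion side.

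Next I would assemble the key exact sequence relating the two Selmer complexes. Changing the local condition at $\p$ from $\bn$ to $\an$ and at $\bar\p$ from $\ad$ to $\bd$ inserts the local cohomology of $\bs{T}^\bn/\bs{T}^\an$ at $\p$ and of its dual at $\bar\p$; by Corollary~\ref{cor:Coleman} both of these local complexes are concentrated in degree $1$ and isomorphic (non-canonically, but $\mathbb{I}$-linearly, via $\mathrm{Col}$) to $\bs{M}$ up to pseudo-null error. Using the octahedral axiom I would splice the resulting long exact sequence
\[
  \cdots\to\widetilde{\h}^1_{\an,\bd}\to\widetilde{\h}^1_{\bn,\ad}\xrightarrow{\ \mathrm{reg}\ }\bs{M}\to\widetilde{\h}^2_{\an,\bd}\to\widetilde{\h}^2_{\bn,\ad}\to\bs{M}'\to\widetilde{\h}^3_{\an,\bd}\to\cdots
\]
where the connecting map out of $\widetilde{\h}^1_{\bn,\ad}$ is exactly the regulator map by construction of $\mathrm{Col}$ composed with restriction (and $\bs{M}'$ is a pseudo-null-error twist of the same rank-$1$ module, coming from the $\bar\p$ side; in fact by the self-duality of the conditions the two contributions are dual to each other up to pseudo-null). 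Feeding in $\widetilde{\h}^0=0$, $\widetilde{\h}^3$ pseudo-null (from (no-pole) via $(\ast)$), and Proposition~\ref{prop:Tors} (torsion in $\widetilde{\h}^1$ is pseudo-null), this six-term piece becomes, in $\mathbf{psMod}_\Lambda$, a clean four-term sequence $0\to\widetilde{\h}^1_{\an,\bd}\to\widetilde{\h}^1_{\bn,\ad}\xrightarrow{\mathrm{reg}}\bs{M}\to\widetilde{\h}^2_{\an,\bd}\to(\widetilde{\h}^2_{\bn,\ad})_\mathrm{tors}\to 0$, using that $\widetilde{\h}^2_{\bn,\ad}$ has rank $1$ and its free part maps isomorphically onto a rank-$1$ subquotient of $\bs{M}'$.

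From here the equivalence is a characteristic-ideal computation. Multiplicativity of $\Char_\Lambda$ in exact sequences (valid in $\mathbf{psMod}_\Lambda$) applied to the four-term sequence gives
\[
  \Char_\Lambda\widetilde{\h}^2_{\an,\bd}\cdot\Char_\Lambda\left(\widetilde{\h}^1_{\bn,\ad}/\widetilde{\h}^1_{\an,\bd}\right)=\Char_\Lambda\bigl(\bs{M}/\mathrm{reg}(\widetilde{\h}^1_{\bn,\ad})\bigr)\cdot\Char_\Lambda(\widetilde{\h}^2_{\bn,\ad})_\mathrm{tors}.
\]
Since $\bs{M}$ is $\Lambda$-projective of $\mathbb{I}$-rank $1$ and $\mathrm{reg}(\bs{z})$ is non-torsion, I would factor $\mathrm{reg}(\widetilde{\h}^1_{\bn,\ad})=\mathrm{reg}(\mathbb{I}\cdot\bs{z})\cdot(\text{something with char ideal }=\Char_\Lambda(\widetilde{\h}^1_{\bn,\ad}/\mathbb{I}\bs{z}))$, so that
\[
  \Char_\Lambda\bigl(\bs{M}/\mathrm{reg}(\widetilde{\h}^1_{\bn,\ad})\bigr)=\Char_\Lambda(\mathrm{reg}(\bs{z}))\cdot\Char_\Lambda\left(\widetilde{\h}^1_{\bn,\ad}/\mathbb{I}\bs{z}\right)^{-1}\!\!,
\]
interpreting the right side as an ideal quotient; and dually, from the $(\vee)$-duality $(\ast)$ between $\widetilde{\h}^1_{\an,\bd}$ and $\widetilde{\h}^2_{\an,\bd}$ one extracts a relation expressing $\Char_\Lambda(\widetilde{\h}^1_{\bn,\ad}/\widetilde{\h}^1_{\an,\bd})$ in terms of the same data. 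Substituting and cancelling yields precisely
\[
  \Char_\Lambda\widetilde{\h}^2_{\an,\bd}=\Char_\Lambda(\mathrm{reg}(\bs{z}))^2\iff \Char_\Lambda(\widetilde{\h}^2_{\bn,\ad})_\mathrm{tors}=\Char_\Lambda\left(\widetilde{\h}^1_{\bn,\ad}/\mathbb{I}\bs{z}\right)^2,
\]
and every step is an equality or divisibility preserved in both directions, so the divisibility form follows by reading the same identities with $\subseteq$. The last sentence is immediate: under (no-triv-zero) the exact sequences before Proposition~\ref{prop:Tors} collapse to $\widetilde{\h}^1_{\spadesuit,\clubsuit}=\h^1_{\spadesuit,\clubsuit}$ and $\widetilde{\h}^2_{\spadesuit,\clubsuit}=X_{\clubsuit,\spadesuit}$ up to pseudo-null, which does not affect characteristic ideals.

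I expect the main obstacle to be the precise identification of the ``squares.'' The doubling in the exponents is the subtle point: it must come from the interaction of the two places $\p$ and $\bar\p$, where the self-dual Selmer condition forces the $\bar\p$-contribution to be the Pontryagin (or $\Lambda$-linear) dual of the $\p$-contribution, so that the single four-term sequence above secretly carries two copies of $\mathrm{reg}(\bs{z})$ — one directly and one via duality $(\ast)$ between $\widetilde{\h}^1_{\an,\bd}$ and $\widetilde{\h}^2_{\an,\bd}$. Getting the bookkeeping exactly right — making sure the Tamagawa error terms in $(\vee)$ genuinely cancel (they contribute equally to both sides by Proposition~\ref{prop:Err}, since the unramified conditions are unchanged), that $\Char$ of the $\Lambda$-dual equals $\Char$ of the original module (true because $\Lambda$ is Gorenstein, so $\Hom_\Lambda(-,\Lambda)$ preserves characteristic ideals on torsion modules), and that the Gorenstein hypothesis on $\mathbb{I}$ is what lets $\bs{M}$ behave like a free rank-$1$ module for the cancellation — is where the care is needed; but each piece is a standard consequence of the results already set up.
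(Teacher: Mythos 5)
Your overall strategy --- compare the two self-dual Selmer complexes through the rank-one local quotient $\bs{T}^\bn/\bs{T}^\an$, the Coleman map, and multiplicativity of characteristic ideals --- is the right one, and your rank computation is essentially the paper's (self-duality of $(\bn,\ad)$ and $(\an,\bd)$, Proposition~\ref{prop:Wiles}, and the two hypotheses on $\bs{z}$). But the central device of your characteristic-ideal argument does not exist. The conditions $(\bn,\ad)$ and $(\an,\bd)$ are \emph{incomparable}: passing from the second to the first enlarges the local condition at $w$ (from $\bs{T}^\an$ to $\bs{T}^\bn$) but \emph{shrinks} it at $\bar{w}$ (from $\bs{T}^{\con,\bd}$ to $\bs{T}^{\con,\ad}$), so neither Selmer complex maps to the other, and there is no exact triangle --- hence no six-term long exact sequence --- relating them with cone $\bs{M}[1]$. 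The octahedral axiom cannot splice the two local modifications, because they point in opposite directions. Moreover, even if your four-term sequence
\[
0\to\widetilde{\h}^1_{\an,\bd}\to\widetilde{\h}^1_{\bn,\ad}\xrightarrow{\ \mathrm{reg}\ }\bs{M}\to\widetilde{\h}^2_{\an,\bd}\to(\widetilde{\h}^2_{\bn,\ad})_\tors\to 0
\]
were exact, it would yield $\Char_\Lambda\widetilde{\h}^2_{\an,\bd}=\Char_\Lambda(C)\cdot\Char_\Lambda(\widetilde{\h}^2_{\bn,\ad})_\tors$ with $C=\coker(\mathrm{reg})$, i.e.\ a \emph{single} factor of $\Char_\Lambda(C)$; combined with $\Char_\Lambda(\mathrm{reg}(\bs{z}))=\Char_\Lambda(C)\,\Char_\Lambda(\widetilde{\h}^1_{\bn,\ad}/\mathbb{I}\bs{z})$, this is off from the stated equivalence by exactly one factor of $\Char_\Lambda(C)$. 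This is precisely the ``doubling'' you flag as the subtle point, but your argument never actually produces it.

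The fix is to route through the two intermediate, mutually dual (but not self-dual) conditions $(\an,\ad)$ and $(\bn,\bd)$. For each fixed $\clubsuit\in\{\ad,\bd\}$ the triangle $\widetilde{\RG}_{\an,\clubsuit}\to\widetilde{\RG}_{\bn,\clubsuit}\to U_w\to[1]$ is legitimate, and the two resulting long exact sequences give $\Char_\Lambda(\widetilde{\h}^2_{\an,\ad})_\tors=\Char_\Lambda(C)\,\Char_\Lambda(\widetilde{\h}^2_{\bn,\ad})_\tors$ and $\Char_\Lambda\widetilde{\h}^2_{\an,\bd}=\Char_\Lambda(C)\,\Char_\Lambda\widetilde{\h}^2_{\bn,\bd}$ (for the latter one must check that $C$ is also the cokernel of the regulator computed from $\widetilde{\h}^1_{\bn,\bd}$, using the pseudo-nullity of $\widetilde{\h}^1_{\an,\ad}$ and $\widetilde{\h}^1_{\an,\bd}$ from the rank computation and Proposition~\ref{prop:Tors}). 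The loop is then closed by $\Char_\Lambda(\widetilde{\h}^2_{\an,\ad})_\tors=\Char_\Lambda\widetilde{\h}^2_{\bn,\bd}$, which is where global duality $(\vee)$ genuinely enters: localizing at each height-one prime, the spectral sequence computing $\RHom_R(\widetilde{\RG}_{\bn,\bd},R)[3]$ degenerates (using the established ranks, (no-pole), and the torsion-freeness of $\widetilde{\h}^1_{\bn,\bd}$), and the two Tamagawa error terms $\mathrm{Err}^1$ and $\mathrm{Err}^2$ have equal length by Proposition~\ref{prop:Err} and so cancel. Chaining the three identities gives $\Char_\Lambda\widetilde{\h}^2_{\an,\bd}=\Char_\Lambda(C)^2\,\Char_\Lambda(\widetilde{\h}^2_{\bn,\ad})_\tors$, which is the correct source of the square and makes the equivalence (in either equality or divisibility form) immediate.
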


\begin{proof}
For $\spadesuit\in\{\an,\bn\}$, the Selmer complex $\widetilde{\RG}_{\spadesuit,\spadesuit}$ is self-dual, so we have $\rank_\mathbb{I}\widetilde{\h}^1_{\spadesuit,\spadesuit}=\rank_\mathbb{I}\widetilde{\h}^2_{\spadesuit,\spadesuit}$. Our hypothesis therefore implies $\rank_\mathbb{I}\widetilde{\h}^2_{\bn,\ad}=1$. We have the exact triangle
\[
  \widetilde{\RG}_{\an,\ad}\to\widetilde{\RG}_{\bn,\ad}\to U_{\p}\to[1]
\]
where $U_\p:=C_\cont^\bullet(\Gal_{\CM_\p},\bs{T}^\bn/\bs{T}^\an)$. By Corollary~\ref{cor:Coleman}, $U_{\p}\simeq\bs{M}[1]$ in $D^b(\mathbf{psMod}_\Lambda)$. The associated long exact sequence contains the following piece
\[
  (\bs{T}^\bn/\bs{T}^\an)^{\Gal_{\CM_\p}}\to\widetilde{\h}^1_{\an,\ad}\to\widetilde{\h}^1_{\bn,\ad}\to \h^1(\CM_\p,\bs{T}^\bn/\bs{T}^\an)
\]
By the assumption on the Galois action, the first term is trivial, and the Coleman map gives an isomorphism of the final term with a rank one $\mathbb{I}$-module. Therefore, $\rank_\mathbb{I}\widetilde{\h}^1_{\bn,\ad}\leq\rank_\mathbb{I}\widetilde{\h}^1_{\an,\ad}+1$. Moreover, assumption (2) implies the final morphism is surjective when tensored with $\Frac(\mathbb{I})$, so equality holds and $\rank_\mathbb{I}\widetilde{\h}^1_{\an,\ad}=0$.

On the other hand, we can apply Proposition~\ref{prop:Wiles} to get
\[
  \rank_\Lambda\widetilde{\h}^1_{\bn,\bd}-\rank_\Lambda\widetilde{\h}^1_{\an,\ad}=\rank_\Lambda\bs{T}^\bn+\rank_\Lambda\bs{T}^{c,\bd}-\rank_\Lambda\bs{T}
\]
We can further convert all $\rank_\Lambda$ to $\rank_\mathbb{I}$ since $\Lambda\to\mathbb{I}$ is finite. By assumption, $\bs{T}^\bd$ and $\bs{T}^{c,\an}$ are mutual annihilators, so $\rank_\mathbb{I}\bs{T}^\bd+\rank_\mathbb{I}\bs{T}^{c,\an}=\rank_\mathbb{I}\bs{T}$. Again by ordinarity, $\rank_\mathbb{I}\bs{T}^\bn-\rank_\mathbb{I}\bs{T}^\an=1$. Therefore,
\[
  \rank_\mathbb{I}\widetilde{\h}^1_{\bn,\bd}-\rank_\mathbb{I}\widetilde{\h}^1_{\an,\ad}=1
\]
It follows that $\rank_\mathbb{I}\widetilde{\h}^1_{\bn,\bd}=1$. By applying the argument in the previous paragraph to the exact triangle $\widetilde{\RG}_{\an,\bd}\to\widetilde{\RG}_{\bn,\bd}\to U_{\p}\to[1]$, it remains to show that $\widetilde{\h}^1_{\bn,\bd}\to \h^1(\CM_\p,\bs{T}^\bn/\bs{T}^\an)$ has non-torsion image. This again follows from assumption (2) since the regulator map factors through $\widetilde{\h}^1_{\bn,\bd}$.

We now verify the equivalence in the second part of the statement. First observe that by the rank calculation above and Proposition~\ref{prop:Tors}, we must have $\widetilde{\h}^1_{\an,\ad}$ is pseudo-null. Let $C=\coker(\mathrm{reg})$. It is torsion since $\bs{M}$ has $\mathbb{I}$-rank 1 and $\Im(\mathrm{reg})$ contains a non-torsion element by part (2) of the hypothesis. The long exact sequence attached to the exact triangle $\widetilde{\RG}_{\an,\ad}\to\widetilde{\RG}_{\bn,\ad}\to U_{\p}\to[1]$ gives
\[
  \widetilde{\h}^1_{\an,\ad}\to\widetilde{\h}^1_{\bn,\ad}\to\bs{M}\to\widetilde{\h}^2_{\an,\ad}\to\widetilde{\h}^2_{\bn,ad}\to \h^2(U_\p)
\]
The two end terms are pseudo-null. In the middle, the map $\widetilde{\h}^1_{\bn,\ad}\to\bs{M}$ is exactly the regulator map by definition. Therefore, we can split it into the following two short exact sequences:
\begin{align*}
  \text{(pseudo-null)}&\to\widetilde{\h}^1_{\bn,\ad}\to\bs{M}\to C\to 0 \\
  0&\to C\to\widetilde{\h}^2_{\an,\ad}\to\widetilde{\h}^2_{\bn,\ad}\to\text{(pseudo-null)}
\end{align*}
In the first sequence, after quotienting out by $\mathbb{I}\cdot\bs{z}$, we get the relation
\[
  \Char_\Lambda(C)\Char_\Lambda\frac{\widetilde{\h}^1_{\bn,\ad}}{\mathbb{I}\cdot\bs{z}}=\Char_\Lambda(\mathrm{reg}(\bs{z}))
\]
In the second sequence, since $C$ is torsion, we can take the torsion subgroup without affecting the pseudo-null cokernel, so
\[
  \Char_\Lambda\big(\widetilde{\h}^2_{\an,\ad}\big)_\tors=\Char_\Lambda C\Char_\Lambda\big(\widetilde{\h}^2_{\bn,\ad}\big)_\tors
\]
Again by Proposition~\ref{prop:Tors},  the module $C$ is also the cokernel of localization from $\widetilde{\h}^1_{\bn,\bd}$, so by the same argument as before, we have a third exact sequence
\[
  0\to C\to\widetilde{\h}^2_{\an,\bd}\to\widetilde{\h}^2_{\bn,\bd}\to\text{(pseudo-null)}
\]
This gives another equality of characteristic ideals. Combining it with the previous two, we see that it remains to show that $\Char_\Lambda\big(\widetilde{\h}^2_{\an,\ad}\big)_\tors=\Char_\Lambda\widetilde{\h}^2_{\bn,\bd}$.

This is now a consequence of global duality. We work one prime at a time. Let $P$ be a height one prime in $\Lambda$, and let $R=\Lambda_P$. It is a discrete valuation ring, and we need to prove $(\widetilde{\h}^2_{\an,\ad})_\tors$ and $\widetilde{\h}^2_{\bn,\bd}$ have the same lengths after localizing at $P$. Since localization is exact, equation ($\vee$) still holds with $\Lambda$ replaced by $R$ everywhere. We apply it to get the following exact triangle
\[
  \widetilde{\RG}_{\an,\ad}\to\RHom_R(\widetilde{\RG}_{\bn,\bd},R)[3]\to\mathrm{Err}\to[1]
\]
Let $A^\bullet$ denote the cohomology of the middle term. It can be computed by the spectral sequence
\[
  \Ext^i_R(\widetilde{\h}^j_{\bn,\bd},R)\Rightarrow A^{i-j+3}
\]
Since $R$ is a DVR, the Ext-groups vanish for $i\geq 2$, and $\Ext^1(-,R)$ computes the torsion module. Moreover, we have computed that $\rank_R\widetilde{\h}^j_{\bn,\bd}=1$ if $j=1$ and is $0$ otherwise. The assumption (no-pole) implies $\widetilde{\h}^0_{\bn,\bd}=0$, and by global duality ($\ast$) also implies $\widetilde{\h}^3_{\bn,\bd}=0$. Finally, Proposition~\ref{prop:Tors} implies that $\widetilde{\h}^1_{\bn,\bd}$ is torsion-free, so it is abstractly isomorphic to $R$. The spectral sequence therefore degenerates on page 2 and gives
\[
  A^i=\begin{cases}
    R\oplus\widetilde{\h}^2_{\bn,\bd} & i=2\\
    0 & i\neq 2
  \end{cases}
\]
Plugging this into the long exact sequence and splitting it gives the following two short exact sequences
\begin{align*}
  0\to\mathrm{Err}^1\to\widetilde{\h}^2_{\an,\ad}&\to N\to 0\\
  0&\to N\to R\oplus\widetilde{\h}^2_{\bn,\bd}\to\mathrm{Err}^2\to 0
\end{align*}
By Proposition~\ref{prop:Err}, the two error terms are torsion and have the same length, so $N\simeq R\oplus((\widetilde{\h}^2_{\an,\ad})_\tors/\mathrm{Err}^1)$. But from the second sequence, $N\simeq R\oplus\ker(\widetilde{\h}^2_{\bn,\bd}\to\mathrm{Err}^2)$. Taking the length of $N_\tors$ gives the desired equality.
\end{proof}

\begin{remark}
  \begin{enumerate}
  \item We can interchange $\an$ with $\bn$ throughout the entire theorem. This requires localizing at $\bar{\p}$ instead of $\p$, and the proof goes through with this change. 
  \item If we know that $\mathbb{I}$ is Gorenstein and $\bs{T}$ is projective over $\mathbb{I}$, then we can do the above computations over $\mathbb{I}$ instead of $\Lambda$. This has the effect of taking all characteristic ideals over $\mathbb{I}$, which gives a refinement of the current statements.
  \item We could replace the base ring $\Lambda$ by any of its localizations without changing any argument, producing results over the localization. This is useful when the constructed lattice $\bs{T}$ has defects. For example, in \S\ref{ss:BigGaloisRep}, the pseudocharacter argument only shows conjugate self-duality over an affinoid subset of the weight space.
  \item We do not expect an implication of the form $\rank_\Lambda\widetilde{\h}^2_{\an,\bd}=0\implies \rank_\Lambda\widetilde{\h}^2_{\bn,\ad}=1$ in this generality. Running through the argument would give a local class in $\widetilde{\h}^1_{\bn}(\CM_\p,\bs{T})$, but it is difficult to show that this comes from a global class.
  \end{enumerate}
\end{remark}

\subsection{Applications}\label{sec:App}
In this subsection, we give two applications of the above formalism. The first will recover one main theorem of \cite{CH18} without the rank 0 Kolyvagin system argument. The second allows one to deduce the rank 0 results in \cite{CastellaTuan} using the Euler systems they constructed. This section also hints at the wall crossing phenomena we will explain in general in the following section.

\subsubsection{Set-up}
We specialize to $F=\Q$, though many of the results stated here extend (or should extend) to more general totally real fields.

Let $N$ be a square-free natural number. Let $f\in S_k^\mathrm{new}(\Gamma_0(N))$ be a modular form which is ordinary at $p$. The modular form $f$ has an attached $p$-adic Galois representation $T_f:\Gal_\Q\to\GL_2(\Or)$, where $\Or$ is a finite extension of $\Z_p$. We assume that $T_f$ is residually irreducible. Let $D_p$ be a decomposition group of $\Gal_\Q$ at $p$, then $T_f$ has the following $D_p$-stable filtration
\[
  0\to T_f^-\to T_f\to T_f^+\to 0
\]
where $D_p$ acts on $T_f^-$ as $\alpha_p\chi_\mathrm{cyc}$ and on $T_f^+$ as $\beta_p$, for some units $\alpha_p,\beta_p\in\Or^\times$, identified as the unramified character sending Frobenius to that value. Both $T_f^-$ and $T_f^+$ are 1-dimensional, and there is a perfect pairing on $T_f$ exchanging $T_f^-$ and $T_f^+$.

We are interested in the anticyclotomic Iwasawa theory of $f$. Fix an anticyclotomic character $\chi$ of our imaginary quadratic field $\CM$. For simplicity, we assume the numbers $N\ ,\mathrm{disc}(\CM),\ \mathrm{cond}(\chi),\ p$ are pairwise coprime. Let $\CM^\mathrm{ac}$ be the anticyclotomic $\Z_p$-extension of $\CM$, $\Gamma^\mathrm{ac}=\Gal(\CM^\mathrm{ac}/\CM)\simeq\Z_p$, $\Lambda^\mathrm{ac}=\Or[[\Gamma^\mathrm{ac}]]$, and $\Psi:\Gal_\CM\to\Lambda^\mathrm{ac}$ be the canonical character. Our $p$-adic family of Galois representation will be
\[
  \bs{T}=T_f\otimes_{\Or}\Lambda^\mathrm{ac}(\chi\Psi)
\]
with $\Gal_\CM$ acting diagonally. This certainly satisfies (no-pole) by residual irreducibility. For the Selmer condition defined by either $\bs{T}$ or $T_f^-\otimes_{\Or}\Lambda^\mathrm{ac}(\chi\Psi)$, the condition (no-triv-zero) is easily checked. Moreover, the pairing on $T_f$ induces a perfect pairing on $\bs{T}$.

\subsubsection{Heegner points}\label{ss:Heegner}
The theory in this setting was very clearly set up in \cite{BCK}, and this was the model on which we built our abstract framework. Suppose the generalized Heegner condition holds, namely the number of primes $\ell|N$ which is inert in $\CM$ is even. Let $\bs{T}^\an=0$ and $\bs{T}^\bn=T_f^-\otimes_{\Z_p}\Lambda^\mathrm{ac}(\chi\Psi)$. This satisfies (ord) since $\chi$ is unramified at $p$. The dual spaces are $\bs{T}^{c,\ad}=\bs{T}^\bn$ and $\bs{T}^{c,\bd}=\bs{T}$.

The special class here is the Howard's big Heegner point \cite{HowardGL2}.
\[
  \bs{z}_c\in \varprojlim_n \h^1(\CM[cp^n],T_f^-)=\h^1(\CM[c],T_f^-\otimes\Lambda^\mathrm{ac}(\Psi))
\]
where we have used the notation $\CM[c]$ to denote the ring class field of $\CM$ with conductor $c$, and the equality is by a standard application of Shapiro's lemma. If $c=\mathrm{cond}(\chi)$, then we can further project to the $\chi$-isotypic part to produce an element $\bs{z}^\chi\in \h^1(\CM,\bs{T}^\bn)$. Since the dual condition is also defined by the same space, we have $\bs{z}^\chi\in \h^1_{\bn,\ad}$.

To verify the hypothesis (2) of Theorem~\ref{thm:Equiv}, we need to compute $\mathrm{reg}(\bs{z}^\chi)$, where $\bs{z}^\chi\in\varprojlim_n \h^1(\CM[p^n]_\p,T_f^-)$. This is the explicit reciprocity law proven in \cite[Theorem 4.4]{BCK}. It now follows from \cite{BurungaleHeegII} that the localization at $\p$ is non-torsion.

Hypothesis (1) as well as one divisibility of the Perrin-Riou main conjecture are both proven in \cite{HowardGL2} using a bipartite Euler system argument. Using our theorem, this implies
\[
  \Char_{\Lambda^\mathrm{ac}}X_{\an,\bd}|(\sh{L}_p^\mathrm{BDP})^2
\]
In classical terms, the Selmer group $X_{\an,\bd}$ is the Greenberg Selmer group with the strict and relaxed conditions at $\p$ and $\bar\p$ respectively. The other side (with the square) is the $p$-adic $L$-function first constructed by Bertolini--Darmon--Prasanna \cite{BDP} interpolating the algebraic parts of the central values $L(\frac{1}{2},f\times\chi')$ where the weights of $\chi'$ are greater than those of $f$. It follows by a standard application of the control theorem that $L(\frac{1}{2},f\times\chi')\neq 0$ implies $\mathrm{Sel}_\mathrm{Gr}(V_f\otimes\chi')=0$ under the previous assumptions on $f$.

\begin{remark}
  By applying complex conjugation to our argument, we can prove a similar rank-0 results for the relaxed-strict Selmer group, corresponding to the case where the weight of $\chi'$ is less than 0. In this case, the result is just the conjugate of the one for positive weight. However, we will see that in general, one special cycle can control several rank 0 regions which appear to be fundamentally different.
\end{remark}

\subsubsection{No Heegner points}\label{ss:NoHeegner}
We now consider the opposite of the above setting, so the number of prime $\ell|N$ which is inert in $\CM$ is odd. In this case, we know that $L(\frac{1}{2},f\times\chi)\neq 0$ for a generic finite order $\chi$ by the main result of \cite{Vatsal03}. They can be interpolated into a $p$-adic $L$-function $\sh{L}_p^{\mathrm{BD}}$ (cf.~\cite[Theorem 5.1.1]{CastellaTuan}), and we have an Iwasawa main conjecture of the form
\[
  X_\mathrm{ord}(E/\CM)|(\sh{L}_p^{\mathrm{BD}})
\]
which implies a generic rank 0 result. This conjecture was established by Bertolini--Darmon \cite{BD-IMC} using level-raising methods under mild technical hypotheses.

In \cite{CastellaTuan}, Castella-Tuan constructed an Euler system for the representation $V_f\otimes\chi'$, where $\chi'$ has large weight. In short, they specialized the Euler system constructed by Bertolini--Seveso--Venerucci \cite{BSV1} in the balanced triple product setting to the case where two of the forms have CM by $\CM$. As was further shown in \cite{CastellaTuan}, the explicit reciprocity law computes the big logarithm of their class as a $p$-adic $L$-function with certain interpolation properties coming from an easy factorization of the unbalanced triple product $p$-adic $L$-function.

In our formalism, let $\bs{T}^{\an}=T_f^-\otimes\Lambda^\mathrm{ac}(\chi\Psi)$ and $\bs{T}^{\bn}=\bs{T}$, so we start with information about the Greenberg Selmer group and want to derive a rank-0 result for the usual Selmer group. The above construction gives rise to a class $\bs{z}_\mathrm{CT}$ satisfying the following properties
\[
  \bs{z}_\mathrm{CT}\in \h^1_{\bn,\ad},\quad \mathrm{reg}(\bs{z}_\mathrm{CT})^2=\sh{L}_p^\mathrm{BD}
\]
The interpolation property as well as the result of Vatsal cited earlier imply that $\sh{L}_p^\mathrm{BD}$ is non-torsion, so Jetchev--Nekov\'a\v{r}--Skinner's Euler system argument \cite{JNS} implies one divisibility of the Perrin-Riou main conjecture. Therefore, by Theorem~\ref{thm:Equiv},
\[
  \Char_{\Lambda^\mathrm{ac}}X_\mathrm{ord}|\sh{L}_p^\mathrm{BD}
\]
The usual control theorem argument implies a rank 0 result. This was deduced without appealing to cyclotomic constructions such as the Rankin--Eisenstein elements of \cite{KLZ1}. While this is not known at the moment, the generalization of the results in the triple product setting to the totally real field case appears to be within reach, so this argument could lead to rank-0 results in those cases as well, where the cyclotomic constructions are not known.

\section{Automorphic background}\label{sec:Aut}
Before presenting our proposed framework, we use this expository section to recall certain aspects of the Gan--Gross--Prasad conjecture. In the discrete series case, we will also explain the combinatorial recipe relating the distinguished signatures and the weight interlacing conditions.

\subsection{Discrete series}\label{ss:DiscreteSeries}
This section recalls some facts about the discrete series on real unitary groups. Throughout this section, the group $\U(p,q)$ is the unitary group of the Hermitian space $V_{p,q}$ defined using the Hermitian form $\mathbf{1}_{p,q}=\big(\begin{smallmatrix}\mathbf{1}_p&\\&-\mathbf{1}_q\end{smallmatrix}\big)$. We view it as a real Lie group embedded in $\GL_n(\C)$, where $n=p+q$.

\subsubsection{Root data}
Let $T$ be the diagonal torus in $\U(p,q)$, then $T\simeq\U(1)^n$ is a compact maximal torus. Define the characters $t_1,\cdots,t_n$, so that $z\in T$ is equal to $\diag(t_1(z),\cdots,t_n(z))$. We will label a general character $\sum_{i=1}^n a_i t_i$ by the tuple $\underline{a}=(a_1,\cdots,a_n)\in\Z^n$. We pick $\Delta^+=\{t_i-t_j\,|\,1\leq i<j\leq n\}$ to be the set of positive roots, and we let
\[
  \rho=\left(\frac{n-1}{2},\frac{n-3}{2},\cdots,\frac{1-n}{2}\right)
\]
be the half sum of positive roots.

Let $K_{p,q}\subseteq\U(p,q)$ be the maximal compact subgroup consisting of matrices of the form $\big(\begin{smallmatrix} g&\\&h\end{smallmatrix}\big)$, where $g\in\U(p)$ and $h\in\U(q)$. The torus $T$ is also a maximal torus of $K_{p,q}$.

\subsubsection{Harish-Chandra classification}\label{ss:HarishChandra}
The discrete series of the group $G=\U(p,q)$ can be indexed its Harish-Chandra parameter, which we combinatorially represent by
\begin{itemize}
  \item a regular infinitesimal character $\underline{a}=(a_1>\cdots>a_n)\in\big(\Z+\frac{n-1}{2}\big)^n$ and
  \item a binary string $\epsilon\in\{0,1\}^n$ with exactly $p$ 0s.
\end{itemize}
We call the binary string its \emph{Harish-Chandra code}. Let $i_1,\cdots,i_p$ be the location of the 0s in $\epsilon$ and $j_1,\cdots,j_q$ be the location of the 1s, then $(\underline{a},\epsilon)$ represents
\[
  (a_{i_1},\cdots,a_{i_p};a_{j_1},\cdots,a_{j_q})\in X^*(T)+\rho
\]
in the usual notations for the Harish-Chandra parameter. In particular, the holomorphic discrete series whose minimal $K$-type is $\underline{\kappa}=(\kappa_1\geq\cdots\geq\kappa_p;\kappa_{p+1}\geq\cdots\geq\kappa_n)$ is represented by the pair
\[
  (\underline{\kappa}+\rho,\underbrace{1\cdots1}_q\underbrace{0\cdots0}_p)
\]
Such a representation exists if and only if $\kappa_n\geq\kappa_1+n$. This will be generalized in \S\ref{ss:CoherentCoh}.

\subsubsection{Generic datum}
We now specify the choices needed to call a representation ``generic''. This will be used in the following section to label elements of the Vogan $L$-packet.

Let $V$ be a split Hermitian space over $\C$. Let $G=\U(V)$, then $G$ is quasi-split. Let $B$ be a Borel subgroup of $G$ with Levi decomposition $B=T\ltimes N$. Let $\lambda:N\to\C^\times$ be a generic character, then a representation $\pi$ is $(N,\lambda)$-generic if
\[
  \dim_\C\Hom_{N}(\pi,\lambda)=1
\]
The torus $T$ acts on $\lambda$, and the above condition is invariant under such action. It is now useful to split into two cases, depending on the parity of $n=\dim V$.

\begin{description}
  \item[$n$ odd] The action of $T$ on the space of generic characters is transitive, and there is a unique generic discrete series representation for each infinitesimal character $\underline{a}$. If $n=2m+1$, then it is the representation on $\U(m+1,m)$ with Harish-Chandra code $01\cdots 010$.
  \item[$n$ even] The action of $T$ on the space of generic characters has two orbits, which correspond to the two $\R_+^\times$-orbits on additive characters $\psi:\C/\R\to\C^\times$ (cf.\ \cite[Section 12]{GGPConjecture}). A set of representative characters is
  \[
    \psi_\C:z\mapsto\exp(2\pi(\bar{z}-z)),\quad\overline{\psi_\C}:z\mapsto\exp(2\pi (z-\bar{z}))
  \]
  Let $n=2m$, then the Harish-Chandra code for the $\overline{\psi_\C}$-generic (resp.~$\psi_\C$-generic) representation is $01\cdots 01$ (resp.~$10\cdots 10$).
\end{description}
Proofs of the above claims can be found in \cite[\S A.3]{Atobe20}.

\subsubsection{Coherent cohomology}\label{ss:CoherentCoh}
Let $V$ be a Hermitian space defined over the extension $\CM/\sh{F}$. At the archimedean places, write its signature as $\{(a_\sigma,b_\sigma)\}_{\sigma\in\Sigma_\infty^+}$. Let $\mb{G}=\U(V)$ be its unitary group, then we fix an isomorphism
\[
  \mb{G}_\infty\simeq\prod_{\sigma\in\Sigma_\infty^+}\U(a_\sigma,b_\sigma).
\]
Let $K_\infty\subseteq\mb{G}_\infty$ be the maximal compact subgroup of $\mb{G}_\infty$ which corresponds to $\prod_\sigma K_{a_\sigma,b_\sigma}$ under this isomorphism. The Shimura datum for $\mb{G}$ defined by
\[
  z\mapsto\left(\begin{smallmatrix}z/\bar{z}\cdot\mathbf{1}_{a_\sigma} & \\ & \mathbf{1}_{b_\sigma}\end{smallmatrix}\right)_{\sigma\in\Sigma_\infty^+}
\]
gives rise to a Shimura variety $\Sh_{\mb{G}}$ whose reflex field is contained in $\CM$. Let $\xi$ be an algebraic representation of $K_\infty$, then it gives rise to an automorphic vector bundle $\sh{V}_\xi$ over $\Sh_{\mb{G}}$. 

Let $\pi$ be a cusp form on $\mb{G}$ whose archimedean components lie in the discrete series. For each $\sigma\in\Sigma_\infty^+$, we have a pair $(\underline{a}_\sigma,\epsilon_\sigma)$ as described in \S\ref{ss:HarishChandra}. Let $q_\sigma$ denotes the number of times where 0 occurs before 1 in $\epsilon_\sigma$, so $0\leq q_\sigma\leq a_\sigma b_\sigma$. Let $q_\pi=\sum_\sigma q_\sigma$, then $0\leq q_\pi\leq\dim\Sh_{\mb{G}}$.

Suppose $\underline{a}_\sigma$ is sufficiently far from walls for all $\sigma$, then by theorems B and D of \cite{Harris90}, $\pi$ contributes to the coherent cohomology of $\sh{V}_\xi$ for a unique $\xi$, in which case its contribution is concentrated in the degree $q_\pi$. We will use this to check some dimension compatibility for $p$-adic $L$-functions.

\subsection{Gan--Gross--Prasad conjecture}\label{sec:LocalGGP}
Let $\Pi=\Pi_{n-1}\otimes\Pi_n$ be a RACSDC automorphic representation on $\GL_n(\A_\CM)\times\GL_{n+1}(\A_\CM)$. We are interested in the arithmetic of $\Pi$, in particular its central $L$-value $L\big(\frac{1}{2},\Pi\big)$. This value depends on the endoscopic transfer of $\Pi$ to a specific pair of unitary groups, predicted by a local restriction problem. This section will briefly recall the key points of this picture.

\subsubsection{Vogan packets}
Let $v$ be a place of $\sh{F}$ which does not split, and let $w$ be the place of $\CM$ above $v$. For $i=n-1,n$, the local Langlands correspondence attaches to $\Pi_i$ an $L$-parameter
\[
  \phi_{i,w}:\mathrm{WD}_{\CM_w}\to\GL_i(\C).
\]
It is conjugate self-dual. The algebraicity requirement forces the sign to be $(-1)^{i-1}$ (cf.~\S\ref{ss:Arch}). It follows from \cite[Theorem 8.1]{GGPConjecture} that $\phi_i$ descends to an $L$-parameter
\[
  \phi_{i,v}:\mathrm{WD}_{\sh{F}_v}\to {}^L\U_i.
\]

We now explain some aspects of endoscopic classification following \cite[Section 9]{GGPConjecture}. By definition, the Vogan packet $\Pi_{\phi_i,v}$ attached to $\phi_{i,v}$ is a disjoint union
\[
  \Pi_{\phi_i,v}=\bigsqcup_{V}\Pi_{\phi_i,v}^{V},
\]
where $V$ runs over all isomorphism classes of Hermitian spaces over $\CM_w/\sh{F}_v$ of dimension $i$, and each $\Pi_{\phi_i,v}^V$ is a collection of irreducible admissible representations of $\U(V)$. Given $\pi\in\Pi_{\phi_i,v}$, let $V^{(\pi)}$ be the Hermitian space such that $\pi$ is a member of $\Pi_{\phi_i,v}^{V^{(\pi)}}$. For any representation $\pi\in\Pi_{\phi_i,v}$, its base change to $\CM_w$ is $\Pi_{i,w}$. 

Let $A_{\phi_i,v}$ be the component group of the centralizer of $\phi_{i,v}$. By the discussion in \cite[Section 4]{GGPConjecture}, it is an elementary abelian 2-group. There is a bijection
\[
  J_v:A_{\phi_i,v}^\vee\simeq\Pi_{\phi_i,v}.
\]
More precisely, this depends on the following auxiliary data.
\begin{description}
  \item[$i$ odd] There are exactly two split Hermitian spaces of dimension $i$ up to isomorphism. They are distinguished by their discriminants $\delta\in\sh{F}_v^\times/\Norm_{\CM_w/\sh{F}_v}\CM_w^\times$. We need to choose one of them.
  \item[$i$ even] The action of $\Norm_{\CM_w/\sh{F}_v}\CM_w^\times$ on the space of additive characters $\CM_w/\sh{F}_v\to\C^\times$ has two orbits. We need to choose one of the orbits.
\end{description}
Following \cite[Section 8]{GGPConjecture}, this choice determines a generic datum, which determines a unique generic representation in $\Pi_{\phi_i,v}$. By construction, it corresponds to the trivial character. The bijection is characterized by certain endoscopic character identities, which are established in \cite[Theorem 1.6.1]{KMSW}.

\subsubsection{Archimedean packets}\label{ss:Arch}
Let $v$ be an archimedean place of $\sh{F}$, and let $w$ be the place of $\CM$ above $v$. The CM type $\Sigma_\infty^+$ selected in the beginning gives an identification $\CM_w\simeq\C$.

Let $i\in\{n-1,n\}$. Since $\Pi_{i,w}$ is regular algebraic, its infinitesimal character $\underline{a}$ lies in $(\Z+\frac{i-1}{2})^i$. Its Langlands parameter is the representation $\varphi_{\underline{a}}:\C^\times\to\GL_n(\C)$ defined by
\[
  \varphi_{\underline{a}}(z)=\diag\big((z/\bar{z})^{a_1},\cdots,(z/\bar{z})^{a_n}\big),
\]
where $(z/\bar{z})^{\alpha}:=z^{2\alpha}(z\bar{z})^{-\alpha}$ if $\alpha\in\frac{1}{2}\Z$. This is conjugate self-dual of sign $(-1)^{i-1}$. The Vogan $L$-packet for $\varphi_{\underline{a}}$ is the disjoint union
\[
  \Pi_{\underline{a}}=\bigsqcup_{p=0}^n \Pi_{\underline{a}}^{(p)}
\]
where $\Pi_{\underline{a}}^{(p)}$ consists of all discrete series representations on $\U(p,q)$ whose infinitesimal character is $\underline{a}$. Let $\pi\in\Pi_{\underline{a}}$. Denote its Harish-Chandra code by $\epsilon_\pi^{\mathrm{HC}}\in\{0,1\}^n$. The association $\pi\mapsto\epsilon_\pi^{\mathrm{HC}}$ is a bijection.

The component group of $\varphi_{\underline{a}}$ is
\[
  A_n\simeq\prod_{i=1}^n(\Z/2\Z)e_i.
\]
Therefore, a character $\chi:A_n\to\{\pm 1\}$ can be represented by the binary string $\epsilon_\chi\in\{0,1\}^n$ such that $(\epsilon_\chi)_i=1$ if and only if $\chi(e_i)=-1$. From the previous subsection, we have a bijection
\[
  J_\infty:A_n^\vee\simeq\Pi_{\underline{a}}.
\]
Given $\pi\in\Pi_{\underline{a}}$, let $\epsilon_\pi^{\mathrm{End}}$ denote the binary string representing $J_\infty^{-1}(\pi)$. The relation between $\epsilon_\pi^{\mathrm{HC}}$ and $\epsilon_\pi^{\mathrm{End}}$ is very concrete. It is useful to split into two cases depending on the parity of $i$.

\begin{description}
  \item[$i$ odd] Let $i=2m+1$. For the split Hermitian space of dimension $2m+1$, we choose $V_{m+1,m}$, which has discriminant 1. In this case,
  \[
    \epsilon_\pi^{\mathrm{HC}}=\epsilon_\pi^{\mathrm{End}}+01\cdots 010
  \]
  \item[$i$ even] Let $i=2m$. The two orbits are represented by $\psi_\C$ and $\overline{\psi_\C}$. We choose the character $\overline{\psi_\C}$ (this will explained in the next subsection). In this case
  \[
    \epsilon_\pi^{\mathrm{HC}}=\epsilon_\pi^{\mathrm{End}}+01\cdots 01
  \]
\end{description}
Proofs of the above claims, namely verifications of the relevant endoscopic character identities, can be found in \cite[\S A.4]{Atobe20}. Note that in both cases, if $\pi$ is the generic representation corresponding to the \emph{opposite} choice, then $\epsilon_\pi^{\mathrm{End}}=11\cdots 1$.

\subsubsection{Local conjecture}
From the point of view of this paper, the local GGP conjecture identifies a specific element of the product $\Pi_{\phi_{n-1},v}\times\Pi_{\phi_n,v}$ which is important to the arithmetic of $\Pi$. We will recall this construction and make it explicit in the archimedean case.

Let $v$ be a place of $\sh{F}$ which does not split in $\CM$. Let $\psi_v:\CM_w/\sh{F}_v\to\C^\times$ be an additive character. If $M$ is any representation of the Weil--Deligne group $\mathrm{WD}_{\CM_w}$, then we have a local root number
\[
  \varepsilon(M,\psi_v)\in\{\pm 1\}
\]
defined in \cite[Section 5]{GGPConjecture}. Using it, we form two distinguished characters as follows. If $a\in A_{\phi_n,v}$, then the subspace $\phi_{n,v}^{a=-1}$ where $a$ acts by $-1$ is stable under $\mathrm{WD}_{\CM_w}$. Define
\[
  \chi_n:A_{\phi_n}\to\{\pm 1\},\ a\mapsto\varepsilon(\phi_{n-1,v}\otimes\phi_{n,v}^{a=-1},\psi_v)
\]
and similarly define $\chi_{n-1}:A_{\phi_{n-1}}\to\{\pm 1\}$. We can now state the local GGP conjecture, which was proven by Beuzart-Plessis \cite{BPLocalWeak,BPLocalRefined} in general.

\begin{theorem}
  Let $\mathtt{V}_{n-1}\subseteq\mathtt{V}_n$ be split Hermitian spaces over $\CM_w/\sh{F}_v$ of dimensions $n-1$ and $n$ respectively. There is a unique pair $(\pi_{n-1.v},\pi_{n,v})$ in $\Pi_{\phi_{n-1},v}\times\Pi_{\phi_n,v}$ satisfying the conditions
  \begin{enumerate}
    \item Relevancy: $V^{(\pi_{n-1,v})}\subseteq V^{(\pi_{n,v})}$, and the quotient is isomorphic to $\mathtt{V}_n/\mathtt{V}_{n-1}$;
    \item Distinguished: the Hom-space
    \[
      \Hom_{\U(V^{(\pi_{n-1,v})})}(\pi_n\otimes \pi_{n-1},\C)
    \]
    is non-trivial, which implies it is one dimensional.
  \end{enumerate}
  
  Let $\delta$ be the discriminant of the odd Hermitian space used to label the Vogan $L$-packet on the odd group. Let $\psi_v'(x)=\psi_v(-2\delta x)$. If we label the Vogan $L$-packet for the even space using $\psi_v'$, then $\pi_{i,v}=J_v(\chi_i)$ for $i=n-1,n$.
\end{theorem}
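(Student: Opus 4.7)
The proof decomposes naturally into three stages: multiplicity at most one, existence of a distinguished pair, and the identification of this pair with $(J_v(\chi_{n-1}), J_v(\chi_n))$. For the first stage, for any $(\pi_{n-1},\pi_n)$ with $V^{(\pi_{n-1})}\subseteq V^{(\pi_n)}$, I would bound the Hom-space
\[
  \Hom_{\U(V^{(\pi_{n-1})})}(\pi_n\otimes\pi_{n-1},\C)
\]
by one dimension. This is the Gelfand pair property of $\bigl(\U(V^{(\pi_n)})\times \U(V^{(\pi_{n-1})}),\Delta\U(V^{(\pi_{n-1})})\bigr)$, to be established by analyzing invariant distributions on the associated symmetric space: orbit analysis reduces nonzero contributions to the closed diagonal orbit, and a transversality argument there gives the bound. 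In the non-archimedean setting this is due to Aizenbud--Gourevitch--Rallis--Schiffmann, and in the archimedean case to Sun--Zhu.

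For the second stage, I would invoke the unitary local relative trace formula of Beuzart-Plessis. Its spectral side, matched against the stable side (coming from $\GL_n\times\GL_{n-1}$ over $\CM_w$), yields an identity summing dimensions of these Hom spaces over all compatible relevant Hermitian pairs and all Vogan packet members. Combined with multiplicity at most one, this identity forces exactly one pair to be distinguished within a given Vogan packet pair, producing both existence and uniqueness. The relevancy condition on the quotient $V^{(\pi_n)}/V^{(\pi_{n-1})}\simeq\mathtt{V}_n/\mathtt{V}_{n-1}$ picks out the unique summand of the disjoint union in which the distinguished pair sits.

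For the third and most delicate stage, one identifies the distinguished pair with $(J_v(\chi_{n-1}), J_v(\chi_n))$. The strategy starts from the generic element of each packet, which by construction of $J_v$ corresponds to the trivial character of the component group. The endoscopic character identities of \cite{KMSW} allow one to transport this base case across the packet: moving by $a\in A_{\phi_i,v}$ multiplies the spectral contribution to the trace formula by a sign which, after matching with the general linear side, turns out to be precisely the local root number of $\phi_{n-1,v}\otimes\phi_{n,v}^{a=-1}$ (resp.\ the analogue for $\chi_{n-1}$), i.e.\ the value $\chi_i(a)$. The conventional shift from $\psi_v$ to $\psi_v'(x)=\psi_v(-2\delta x)$ is needed to synchronize the two allowed Whittaker normalizations on the even unitary group with the convention implicit in defining $\chi_i$ via $\psi_v$; the factor $-2\delta$ encodes the discriminant of the odd Hermitian space chosen to fix the labeling of the odd packet. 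The main obstacle is precisely this step: multiplicity and existence are deep but essentially off-the-shelf, whereas tracking normalizations of additive characters, generic data, transfer factors, and Whittaker functionals consistently through the entire argument is where genuine care is required.
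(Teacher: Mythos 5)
This theorem carries no proof in the paper: it is attributed directly to Beuzart-Plessis (\cite{BPLocalWeak,BPLocalRefined}, cited in the sentence immediately preceding the statement), and H.~He's earlier archimedean proof is acknowledged in the paragraph that follows. There is therefore no in-paper argument for your proposal to be compared against; what you have written is a reconstruction of the literature proof, not a parallel to anything done here.

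As a summary of that literature your sketch is broadly accurate: multiplicity at most one is Aizenbud--Gourevitch--Rallis--Schiffmann in the $p$-adic case and Sun--Zhu archimedean; existence/uniqueness of the distinguished pair is the main theorem of \cite{BPLocalWeak}; and the identification of that pair with $J_v(\chi_{n-1})$, $J_v(\chi_n)$ is the refined statement of \cite{BPLocalRefined}, which uses the endoscopic character identities of \cite{KMSW} and is, as you say, where the normalization bookkeeping is genuinely delicate. Two imprecisions are worth flagging. First, the trace formula underlying the existence step in \cite{BPLocalWeak} is a \emph{local} trace formula whose geometric and spectral sides live entirely on the unitary pair $(\U(W),\U(V))$; the comparison with $\GL_{n-1}\times\GL_n$ over $\CM_w$ (via stable base change) enters in the refined identification and the Ichino--Ikeda-type formula, not in the bare existence argument, so your stage two slightly conflates the two. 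Second, the twist $\psi_v'(x)=\psi_v(-2\delta x)$ is not produced by the trace formula; it is a normalization fixed in \cite[\S\S 12, 18]{GGPConjecture} so that the generic data labelling the even packet is coherent with the discriminant $\delta$ chosen to label the odd one, and the theorem in the paper is simply recording that convention rather than deriving it.
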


Now suppose $v|\infty$. We describe the distinguished representation explicitly in terms of their Harish-Chandra codes. In this case, the local conjecture was first proven by H.~He, who gave a different combinatorial recipe for describing the distinguished representation \cite{HeGGP}. Choose $\psi_\C(z)=e^{2\pi(\bar{z}-z)}$, and suppose the parameter $\phi$ corresponds to discrete series with infinitesimal character
\[
  \phi_{\underline{a}}\leftrightarrow(a_1>a_2>\cdots>a_n),\ \phi_{\underline{b}}\leftrightarrow(b_1>b_2>\cdots>b_{n-1})
\]
We consider the pair of parameters $(\phi_{\underline{b}}^\vee,\phi_{\underline{a}})$. This is the point of view of a restriction problem, which we find to be clearer. For $1\leq i\leq n$, we have $\phi_{\underline{a}}^{e_i=-1}=\C((z/\bar{z})^{a_i})$, so
\[
  \chi_n(e_i)=\varepsilon(\C((z/\bar{z})^{a_i}\otimes\phi_{\underline{b}}^\vee,\psi_\C)=\prod_{j=1}^{n-1}\varepsilon(\C((z/\bar{z})^{a_i-b_j}),\psi_\C)=(-1)^{\#\{j\,|\,a_i>b_j\}}
\]
where the final equality follows from \cite[(3.2.5)]{TateCorvallis}. Similarly, we have
\[
  \chi_{n-1}(f_j)=(-1)^{\#\{i\,|\,a_i>b_{n-j}\}}
\]
where the appearance of $n-j$ instead of $j$ is due to the contragredient on $\phi_{\underline{b}}$. The Harish-Chandra codes of the distinguished representations are obtained by adding $0101\cdots$ to the codes for $\chi_n$ and $\chi_{n-1}$.

Since we are always labelling the odd unitary group using the space $V_{m,m-1}$ of discriminant 1, our starting pair is one of the two pairs
\begin{equation}\label{eqn:RelevantPair}
  \U(m-1,m-1)\subseteq\U(m,m-1)\text{ or }\U(m,m-1)\subseteq\U(m,m),
\end{equation}
depending on the parity of $n$. The twist by $-2\delta$ in the statement of the theorem explains our previous choice of labelling the Vogan packets on the even unitary group using $\overline{\psi_\C}$. As a sanity check, we prove an easy combinatorial lemma which shows that the resulting Harish-Chandra codes always give a relevant pair of Hermitian spaces.

\begin{lemma}
  Let $\epsilon_n$ and $\epsilon_{n-1}$ be the binary strings obtained from the above procedure. Let $p_n$ (resp.~$p_{n-1}$) be the number of 1s in $\epsilon_n$ (resp.~$\epsilon_{n-1}$). If $n$ is even, then $p_n-p_{n-1}=1$, otherwise $p_n-p_{n-1}=0$.
\end{lemma}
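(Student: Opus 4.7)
\medskip

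\noindent\textbf{Proof proposal.} The plan is to reduce the identity to a bookkeeping exercise on the interlacing string $\square$ of length $2n-1$ over $\{\mathrm{A},\mathrm{B}\}$, obtained by listing $a_1,\dots,a_n,b_1,\dots,b_{n-1}$ in decreasing order. Because $a_i\in\Z+\tfrac{n-1}{2}$ and $b_j\in\Z+\tfrac{n}{2}$ have opposite parities, no coincidences occur, so $\square$ is well defined. Number the positions of $\square$ from $1$ (leftmost/largest) to $2n-1$, and let $p_1<\cdots<p_n$ be the A-positions and $q_1<\cdots<q_{n-1}$ the B-positions, so that $a_i$ sits at $p_i$ and $b_j$ at $q_j$.

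Next I would rewrite the endoscopic bits combinatorially. Among positions $\le p_i$ there are $i$ A's and $p_i-i$ B's, so
\[
\#\{j\vb a_i>b_j\}=(n-1)-(p_i-i),\qquad(\eta_n)_i\equiv n-1+i-p_i\pmod 2.
\]
The added string has $i$th coordinate $\equiv i+1\pmod 2$, and summing gives the clean formula $\epsilon_n[i]\equiv n-p_i\pmod 2$, i.e.\ $\epsilon_n[i]=1$ iff $p_i$ and $n$ have opposite parities. A symmetric count (with $b_{n-j}$ the $(n-j)$th B, so the number of $a_i>b_{n-j}$ equals $q_{n-j}-(n-j)$) and a matching addition yields $\epsilon_{n-1}[j]\equiv q_{n-j}+n+1\pmod 2$, i.e.\ $\epsilon_{n-1}[j]=1$ iff $q_{n-j}$ has the same parity as $n$.

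Now I would finish by a parity count on $\square$. Let $O_A,E_A$ (resp.\ $O_B,E_B$) denote the number of A's (resp.\ B's) in odd and even positions; the $2n-1$ positions split as $n$ odd and $n-1$ even, giving
\[
O_A+O_B=n,\qquad E_A+E_B=n-1,\qquad O_A+E_A=n,\qquad O_B+E_B=n-1.
\]
If $n$ is even, the bit formulas give $p_n=O_A$ and $p_{n-1}=E_B$, hence $p_n-p_{n-1}=O_A-E_B=O_A+E_A-(n-1)=n-(n-1)=1$. If $n$ is odd, they give $p_n=E_A$ and $p_{n-1}=O_B$, and $O_B=n-O_A=E_A$, so $p_n-p_{n-1}=0$.

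The only real pitfall is the bit-level arithmetic: getting the sign of the added pattern $01\cdots$ right and tracking the index reversal $j\leftrightarrow n-j$ coming from the contragredient on $\phi_{\underline{b}}$. Once the two formulas $\epsilon_n[i]\equiv n-p_i$ and $\epsilon_{n-1}[j]\equiv q_{n-j}+n+1\pmod 2$ are established, the remainder is a two-line count on $\square$.
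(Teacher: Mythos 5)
Your proof is correct, and it checks out against small cases (I verified $n=2,3$ directly), but it takes a genuinely different route from the paper. The paper's argument is an induction on wall crossings: it verifies the claim that $(\epsilon_n)_i=(\epsilon_{n-1})_j$ for every adjacent pair $b_j>a_i$ in the base case of the classical branching law ($\mathtt{diag}$), asserts that this invariant persists under an adjacent transposition, deduces that each transposition flips exactly the two corresponding bits and hence leaves $p_n-p_{n-1}$ unchanged, and then reads off the answer from the compact-group example. Your proof instead derives the clean parity formulas $\epsilon_n[i]\equiv n-p_i$ and $\epsilon_{n-1}[j]\equiv q_{n-j}+n+1\pmod 2$ directly from the position bookkeeping in the interlacing string, and then finishes with a parity count on odd/even positions of $A$ and $B$. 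The direct route has the advantage of being fully self-contained: the paper leans on an ``it's easy to see'' for the invariance of the adjacency relation, which is not spelled out, whereas your computation requires no such appeal. The paper's argument, on the other hand, is shorter and is deliberately phrased in the wall-crossing language that the rest of the article develops, so the two proofs trade concision/thematics for explicitness.

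One small presentational warning: you use $p_1<\cdots<p_n$ for the $A$-positions in the string and simultaneously use $p_n,p_{n-1}$ in the sense of the lemma (the number of $1$s in $\epsilon_n,\epsilon_{n-1}$). This is a notational collision -- the line ``$p_n=O_A$'' reads literally as ``the position of $a_n$ equals $O_A$'', which is not what you mean. Rename one of the two (e.g.\ write the position sequence as $\pi_1<\cdots<\pi_n$) before this could go in a final write-up.
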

\begin{proof}
We first show that if $a_i<b_j$ is a part of the weight interlacing condition, then $(\epsilon_n)_i=(\epsilon_{n-1})_j$. We will check that this holds for the classical branching law in the next example, and it's easy to see that changing the order of two consecutive weights preserve the property, so this is proven. It follows that if we change $a_i<b_j$ to $b_j<a_i$, then only $(\epsilon_n)_i$ and $(\epsilon_{n-1})_j$ gets flipped, so $p_n-p_{n-1}$ does not change. Again by the next example, $p_n-p_{n-1}$ takes the prescribed value in the case of the classical branching law, so we are done.
\end{proof}

Finally, we give two examples to illustrate the above combinatorial recipe.

\begin{example}\label{ex:Compact}
This is compatible with the classical branching law for compact groups. Recall that if $\pi_n$ is the representation of the compact Lie group $\U(n)$ with highest weight $(w_1\geq w_2\geq\cdots\geq w_n)$, then $\pi_n|_{\U(n-1)}$ is a multiplicity-free direct sum of representations with highest weights $(v_1\geq v_2\geq\cdots\geq v_{n-1})$ satisfying the perfect interlacing condition
\[
  w_1\geq v_1\geq w_2\geq v_2\cdots\geq v_{n-1}\geq w_n
\]
The infinitesimal character $\underline{a}$ is equal to $\underline{w}+\rho_n$, where $\rho_n=\big(\frac{n-1}{2},\frac{n-3}{2},\cdots,\frac{1-n}{2}\big)$ is half of the sum of positive roots. Similarly for $\underline{b}$. Therefore, the interlacing condition in terms of infinitesimal characters is
\[
  a_1>b_1>a_2>\cdots>a_{n-1}>b_{n-1}>a_n
\]
The distinguished characters have binary words
\[
  \chi_n:\cdots 1010,\quad \chi_{n-1}:\cdots 0101
\]
If $n$ is even, then the Harish-Chandra codes for both groups are entirely 1, corresponding to the inner forms $\U(0,n-1)\subseteq\U(0,n)$. If $n$ is odd, then they are all 0, corresponding to the inner forms $\U(n-1,0)\subseteq\U(n,0)$. Note that in both cases, the pairs are relevant.
\end{example}

\begin{example}
This is also compatible with Blattner's formula in the case $\U(n,0)\subseteq\U(n,1)$ \cite{HechtSchmid75}. We consider the case $n$ is odd for simplicity, otherwise the relevant signature is $\U(0,n)\subseteq\U(1,n)$, but the discussion is identical if we had chosen a different odd Hermitian space as our base space.

We use the notations in \emph{loc.\ cit.\ } to state the formula. Suppose the Harish-Chandra parameter is
\[
  \lambda=(a_1,\cdots,a_{k-1},a_{k+1},\cdots,a_{n+1},a_k)
\]
The positive roots associated with this system are
\[
  R_c^+=\{t_i-t_j\,|\,1\leq i<j\leq n\},\ R_n^+=\{t_1-t_{n+1},\cdots,t_{k-1}-t_{n+1},-t_k+t_{n+1},\cdots,-t_n+t_{n+1}\}
\]
Let $(c_1,\cdots,c_{n+1})$ be a character whose components sum to 0, then $Q(c_1,\cdots,c_{n+1})$ is 1 if and only if $c_1,\cdots,c_{k-1}\geq 0$ and $c_k,\cdots,c_n\leq 0$. Since $(b_1>\cdots>b_n)$ is the infinitesimal character on $\U(n)$, we have $\mu+\rho_c=(b_1,\cdots,b_n,b_{n+1})$, where $b_{n+1}$ is chosen so that $\sum_{i=1}^{n+1}b_i=\sum_{i=1}^{n+1}a_i$ and correspond to the character on $\U(1)$. Moreover, we can compute that
\[
  \lambda+\rho_n=\Big(a_1+\frac{1}{2},\cdots,a_{k-1}+\frac{1}{2},a_{k+1}-\frac{1}{2},\cdots,a_{n+1}-\frac{1}{2},a_k+\frac{n-2k+2}{2}\Big)
\]
If the weight interlacing condition is
\[
  b_1>a_1>\cdots>b_{k-1}>a_{k-1}>a_k>a_{k+1}>b_k\cdots>a_{n+1}>b_n
\]
then it's easy to check that the multiplicity given by Blattner's formula is 1. Moreover, in a weight interlacing condition with $b_i>b_{i+1}$ and no $a_{i'}$ in between, the transposition $(i\ i+1)$ does not change the value of $Q$, but it changes the sign, so the total sum is 0. Therefore, we have described the exact weight interlacing condition distinguished by $\U(n,0)\subseteq\U(n,1)$.

The distinguished representation predicted by the local GGP conjecture satisfy $\chi_n=01\cdots 010$ and $\chi_{n+1}$ differs from $10\cdots 1010$ at the $k$-th bit. It is easy to see that they correspond exactly to the weight interlacing condition computed above.
\end{example}

\subsubsection{Global conjecture}
Going back to the RACSDC global representation $\Pi=\Pi_{n-1}\otimes\Pi_n$. Fix a global additive character $\psi:\A_\CM/\A\to\C^\times$. Using it, we can define the global root number
\[
  \varepsilon(\Pi)=\prod_v\varepsilon(\phi_{n-1,v}\otimes\phi_{n,v},\psi_v)\in\{\pm 1\}
\]
It is independent of the choice of $\psi$, and it is the sign of the functional equation of $L(s,\Pi)$ at the centre $s=\frac{1}{2}$. We have two cases.
\begin{description}
  \item[Coherent case] $\varepsilon(\Pi)=1$. There is a pair of Hermitian spaces $V_{n-1}\subseteq V_{n}$ and a cuspidal automorphic representation $\pi$ on $\U(V_{n-1})\times\U(V_n)$ such that
  \begin{enumerate}
    \item The base change of $\pi$ to $\CM$ is $\Pi$.
    \item For each non-split place $v$, the local component $\pi_v$ is the distinguished element of the Vogan packet specified by the local conjecture.
    \item For any $\varphi\in\pi$, there is a formula of the form
    \[
      \left|\int_{[\U(V_{n-1})]}\varphi(h)\,dh\right|^2=(\ast)\cdot L\Big(\frac{1}{2},\Pi\Big)\cdot L(1,\Pi,\mathrm{ad})^{-1}
    \]
    where the term $(\ast)$ consists of an elementary term and an explicit product of local terms depending on $\varphi$. It is non-zero for some choice of $\varphi$.
  \end{enumerate}
  
  By the discussion in \cite[Section 25]{GGPConjecture}, items (1) and (2) follows from the endoscopic classification, which is known by \cite{KMSW} since our $L$-parameters are generic. Item (3) is the Gan--Gross--Prasad formula, fully proven in our case by \cite{GGPStable} (since we are starting with a cusp form $\Pi$, our parameters are stable).
  
  \item[Incoherent case] $\varepsilon(\Pi)=-1$. In contrast to the previous case,
  \begin{enumerate}
    \item The collection of local Hermitian spaces distinguished by the local conjecture no longer fits together into a Hermitian space defined over $\CM/\sh{F}$.
    \item $L\big(\frac{1}{2},\Pi\big)=0$ trivially by the functional equation.
  \end{enumerate}
  
  By the Beilinson--Bloch--Kato conjecture, this vanishing of the central value should be explained by the presence of certain algebraic cycles. In the ``minimal weight'' case, where the infinitesimal character of $\Pi$ at each archimedean place is given by the pair
  \[
    \Pi_{n-1}:\big(-\frac{n-2}{2},-\frac{n-4}{2},\cdots,\frac{n-2}{2}\big),\quad \Pi_n:\big(-\frac{n-1}{2},-\frac{n-3}{2},\cdots,\frac{n-1}{2}\big),
  \]
  such an algebraic cycle is conjecturally constructed in \cite[\S 27]{GGPConjecture} and \cite{ZhangAFL1}. In the more general case where the archimedean weights are perfectly interlacing (as in Example~\ref{ex:Compact}), one may construct the realizations of such cycles in various cohomology theories using coefficients.
\end{description}

As will be explained later in this article, such a dichotomy is also present in Iwasawa theory, producing what we will call coherent and incoherent Iwasawa main conjectures. The incoherent case appears to be considerably deeper, and it actually implies a version of the coherent main conjecture.

\subsection{Galois representations}\label{sec:GalRep}

\subsubsection{General properties}
Let $\Pi$ be an RACSDC automorphic representation of $\GL_n(\A_\CM)$. By the work of many people (cf.~\cite{ChenevierHarris,Caraiani2012}), there is a geometric Galois representation
\[
  \rho_\Pi:\Gal_\CM\to\GL_n(\bar{\Q}_p)
\]
attached to $\Pi$ in the sense that
\begin{equation}\label{eq:L-function}
  L(s,\Pi)=L\Big(\iota^{-1}\rho_\Pi,s+\frac{n-1}{2}\Big).
\end{equation}
Recall that $\iota:\C\simeq\C_p$ was a fixed isomorphism. Since $\Pi$ is conjugate self-dual of sign $(-1)^{n-1}$, the same is true for $\rho_\Pi$ up to a twist, namely we have an isomorphism
\[
  \rho_{\Pi}^{\mathtt{c}}\simeq\rho_{\Pi}^\vee(1-n).
\]
Moreover, $\rho_{\Pi}$ is pure of weight $n-1$.

We recall the Hodge--Tate weights of $\rho_{\Pi}$ at places above $p$. This requires some bookkeeping. Let $\sigma\in\Sigma_\infty^+$ be an embedding $\CM\hookrightarrow\C$. The associated archimedean places of $\CM$ and $\sh{F}$ will also be denoted by $\sigma$. Let $(a_i^\sigma)_{1\leq i\leq n}\in(\Z+\frac{n-1}{2})^n$ be the infinitesimal character of $\Pi_\sigma$. The composition $\sigma_p:=\iota\circ\sigma:\CM\hookrightarrow\C_p$ induces a $p$-adic place $w$ of $\CM$. Let $v$ be the place of $\sh{F}$ below $w$, then $v=w\bar{w}$ since $p$ splits completely in $\CM$. The Hodge--Tate weights of $\rho_\Pi$ are
\[
  \Big(\frac{n-1}{2}-a_i^\sigma\Big)_{1\leq i\leq n}\text{ at }w,\quad \Big(\frac{n-1}{2}+a_i^\sigma\Big)_{1\leq i\leq n}\text{ at }\bar{w}.
\]
Recall that by our convention, the cyclotomic character has Hodge--Tate weight $-1$.

\subsubsection{Ordinary representations}
Let $\sigma,\sigma_p,v,w$ be as in the previous subsection. Suppose $\Pi$ is unramified at $w$ (equivalently $\bar{w}$), then $\Pi_w$ is an unramified representation of $\GL_n(\sh{F}_v)$. Write its Satake parameters in the form
\[
  p^{-a_1^\sigma}\iota^{-1}(\alpha_1^\sigma),\cdots p^{-a_n^\sigma}\iota^{-1}(\alpha_n^\sigma)\in\C
\]
so we have a list of $p$-adic numbers $\alpha_1^\sigma,\cdots,\alpha_n^\sigma\in\C_p$. The representation $\rho_\Pi$ is crystalline at $w$ \cite{CH18}, and its Frobenius eigenvalues are
\[
  \Big(p^{\frac{n-1}{2}-a_i^\sigma}\alpha_i^\sigma\Big)_{1\leq i\leq n},
\]
which can be read off from equation~\eqref{eq:L-function}.

We say $\Pi$ is \emph{ordinary} (at $p$) if
\begin{equation}\tag{ord}
  \begin{cases}
    \text{(i) $\Pi$ is unramified at all places above }p\\
    \text{(ii) } |\alpha_i^\sigma|_v=1\text{ for all $i$ and all $\sigma$}
  \end{cases}
\end{equation}
Equivalently, the Hodge polygon and the Newton polygon coincide for $D_\cris(\rho_\pi)$. It follows that the local Galois representation $\rho_\Pi|_{\Gal_{\CM_w}}$ is upper triangular for each $p$-adic place $w$ (cf.~\cite[Lemma 7.2]{SkinnerUrban}). More precisely, if $w\in\Sigma_p^+$, then there exists a $\Gal_{\CM_w}$-stable increasing filtration
\[
  0=\Fil_0\rho_\Pi\subseteq\cdots\subseteq\Fil_n\rho_\Pi=\rho_\Pi
\]
such that for $i=1,\cdots,n$, the graded pieces are
\[
  (\Fil_i/\Fil_{i-1})\rho_\Pi\simeq\alpha_i^\sigma\chi_\mathrm{cyc}^{a_i^\sigma-\frac{n-1}{2}},
\]
where we also use $\alpha_i$ to denote the unramified character such that $\alpha_i(\Fr_\p)=\alpha_i$. 

Choose a basis $\{v_1,\cdots,v_n\}$ for $\rho_\pi$ so that $\Fil_i\rho_\pi=\bra v_1,\cdots,v_i\ket$. Let $\{v_1^\vee,\cdots,v_n^\vee\}$ be the dual basis on $\rho_\Pi^\vee$. Using the identification $\rho_\Pi^\vee\simeq\rho_\Pi^\mathtt{c}(n-1)$, we view $\rho_\Pi^\vee$ as supported on the same space as $\rho_\Pi$. This induces a \emph{decreasing} $\Gal_{\CM_{\bar{w}}}$-stable filtration
\[
  \rho_\Pi=\Fil^0\rho_\Pi\supseteq\cdots\supseteq\Fil^n\rho_\Pi=0
\]
with $\Fil^i\rho_\Pi=\bra v_{i+1}^\vee,\cdots,v_n^\vee\ket$. The graded pieces hare are
\[
  (\Fil^{i-1}/\Fil^i)\rho_\Pi\simeq\alpha_i^{-1}\chi_\mathrm{cyc}^{-a_i+\frac{n-1}{2}}
\]
as $\Gal_{\CM_{\bar{w}}}$-representations.

\section{Framework and further speculations}\label{sec:Guess}
Let $\Pi=\Pi_{n-1}^\vee\otimes\Pi_n$ be an RACSDC representation of the group $\GL_{n-1}(\A_\CM)\times\GL_n(\A_\CM)$ which is ordinary at $p$. The contragredient is inserted, since we are taking the point of view of a restriction problem.  The various numerology will also be simpler.

In this section, we propose a framework to understand Iwasawa theory for $\Pi$. Recall that for each $\sigma\in\Sigma_\infty^+$, we also denote its induced places on $\sh{F}$ and $\CM$ by $\sigma$. In addition, let
\[
  (a_1^\sigma>\cdots>a_n^\sigma),\quad (b_1^\sigma>\cdots>b_{n-1}^\sigma)
\]
be the infinitesimal characters of $\Pi_{n,\sigma}$ and $\Pi_{n-1,\sigma}$ respectively. Their relative position plays a key role in our discussion.

\subsection{Interlacing and Selmer conditions}
By applying the constructions in \S\ref{sec:GalRep} to $\Pi_{n-1}$ and $\Pi_n$, we can define an $n(n-1)$-dimensional geometric Galois representation
\[
  \rho_\Pi:=\rho_{\Pi_{n-1}^\vee}\otimes\rho_{\Pi_n}(n):\Gal_\CM\to\GL_{n(n-1)}(\bar{\Q}_p)
\]
which is attached to $\Pi$ in the sense that
\[
  L\Big(s+\frac{1}{2},\Pi\Big)=L(\rho_\Pi,s)
\]
It is arithmetically conjugate-symplectic. As a result, $L(\rho_\Pi,0)$ is always a critical value. More precisely, the $L$-factor at the archimedean place $\sigma$ is
\[
  L_\sigma(\rho_\Pi,s)=\prod_{i=1}^n\prod_{j=1}^{n-1}\Gamma_\C\bigg(s+\abs{b_j^\sigma-a_i^\sigma}+\frac{1}{2}\bigg).
\]
It follows that the critical values are those $s$ such that $\abs{s}\leq\abs{b_j^\sigma-a_i^\sigma}+\frac{1}{2}$ for all $i,j,\sigma$.

The Bloch--Kato conjecture for $\Pi$ states that
\[
  \ord_{s=\frac{1}{2}}L(s,\Pi)\stackrel{?}{=}\dim_\CM \h^1_f(\CM,\rho_\Pi)
\]
Here, the subscript $f$ is the Bloch--Kato local condition. It is the unramified condition away from $p$, but the conditions at the two primes above $p$ are more subtle. However, it is easy to describe if $\rho_\Pi$ satisfies the Panchishkin condition
\begin{equation}
  \parbox{0.8\linewidth}{
  At each $p$-adic place $w$ of $\CM$, the representation $\rho_\Pi$ contains a $\Gal_{\CM_w}$-stable subspace $\rho_\Pi^-$ such that the Hodge--Tate weights of $\rho_\Pi^-$ are non-positive and the Hodge--Tate weights of $\rho_\Pi/\rho_\Pi^-$ are positive.}\tag{P}
\end{equation}
Since $\rho_\Pi$ is pure of weight $-1$, we know that
\[
  \h^1_f(\CM_\p,\rho_\Pi)=\ker(\h^1(\CM_\p,\rho_\Pi)\to \h^1(\CM_\p,\rho_\Pi/\rho_\Pi^-))
\]
by \cite[Proposition 3.3.2(2)]{NekovarParityIII}.

The Hodge--Tate weights of $\rho_\Pi$ are directly related to the relative sizes of the infinitesimal characters for $\Pi_n$ and $\Pi_{n+1}$. We describe it using a ``weight interlacing string''.

\begin{definition}\label{def:String}
  An \emph{interlacing string} is any string of length $2n-1$ with exactly $n$ $A$s and $n-1$ $B$s. 

  Given two regular algebraic representations $\Pi_{n-1,\infty}$ and $\Pi_{n,\infty}$ with infinitesimal characters $(b_1>\cdots>b_{n-1})$ and $(a_1>\cdots>a_{n})$ respectively. Define their weight interlacing string as follows: first write the numbers $a_1,\cdots,a_{n},b_1,\cdots,b_{n-1}$ in descending orders, then forget the subscript to obtain a string of the desired form.
  
  The weight interlacing string for $\Pi$ is the collection of the weight interlacing strings as above, indexed by the archimedean places of $\sh{F}$.
\end{definition}

Recall that $a_i\in\Z+\frac{n+1}{2}$, but $b_j\in\Z+\frac{n}{2}$, so they cannot be equal. Now if $\Pi$ is ordinary (and hence unramified) at $p$, then all Selmer conditions are Panchishkin, and they are naturally indexed by weight interlacing strings. This is very explicit. Fix a $p$-adic place $v$ and let $w$ be the place above $v$ in the $p$-adic CM type. As described in Section~\ref{sec:GalRep}, choose a basis $\{w_1,\cdots,w_{n-1}\}$ for $\rho_{\Pi_{n-1}}$ which is compatible with the filtration. Similarly, choose a basis $\{v_1,\cdots,v_{n}\}$ for $\rho_{\Pi_n}$. Then a basis for $\rho_\Pi$ is $\{v_i\otimes w_j^\vee\}$. Given an interlacing string $\square$, we can define a subspace
\[
  \rho_\square:=\bra v_i\otimes w_j^\vee\,|\,a_i-b_j>0\ket\subseteq\rho_\Pi.
\]
By its compatibility with filtration, this is $\Gal_{\CM_w}$-stable and therefore defines a Selmer condition. At the place $\bar{w}$, the dual space is
\[
  \rho^\mathtt{c}_\square:=\bra v_i^\vee\otimes w_j\,|\,a_i-b_j<0\ket\subseteq\rho^\mathtt{c}_\Pi.
\]
which is a $\Gal_{\CM_{\bar{w}}}$-invariant subspace.

Given a collection of interlacing strings $\square=(\square_\sigma)_{\sigma\in\Sigma_\infty^+}$, we may use it to define Selmer conditions at all $p$-adic places of $\sh{K}$ as above. Together with the unramified condition away from $p$, this gives a Selmer group $\h^1_\square(\CM,\rho_\Pi)$. If $\square$ is the weight interlacing string of $\Pi$, then the previous discussion shows that
\[
  \h^1_\square(\CM,\rho_\Pi)=\h^1_f(\CM,\rho_\Pi),
\]
so $\square$ defines the correct Selmer condition.

Pictorially, we view  one such subspace as a tableau, for example
\begin{center}
  \begin{tikzpicture}[scale=0.4]
    \draw [fill=gray] (0,0) rectangle (3,1);
    \draw [fill=gray] (0,0) rectangle (2,3);
    \node at (-1,0.4) {$a_1$};
    \node at (-1,1.4) {$a_2$};
    \node at (-1,2.4) {$a_3$};
    \node at (-1,3.4) {$a_4$};
    \node at (0.5,-1) {$b_3$};
    \node at (1.5,-1) {$b_2$};
    \node at (2.5,-1) {$b_1$};
    \draw (0,0) grid (3,4);
  \end{tikzpicture}
\end{center}
In this diagram, each square represents a basis element, and the filled in area represents $\rho_\square$, consisting of the subset of basis elements defined by the inequality $a_i>b_j$. Since the sequences $\underline{a}$ and $\underline{b}$ are decreasing, the shaded part is lower-left closed. The same combinatorial structure appears in \cite{GHL} to determine the transcendental period.

\begin{example}
  Let $\sh{F}=\Q$ for simplicity. The case $n=2$ is the case of a modular form $f$ twisted by an anticyclotomic Hecke character $\chi$. In this case, there are 3 weight interlacing strings: $ABA,\ AAB,\ BAA$. The filtration on $\rho_f=\rho_{\Pi_2}$ is the usual ordinary filtration on the Tate module. Therefore, in more classical notations for Selmer groups, we have
  \[
    \h^1_{ABA}=\h^1_\mathrm{ord},\quad \h^1_{AAB}=\h^1_{\mathrm{rel},\mathrm{str}},\quad \h^1_{BAA}=\h^1_{\mathrm{str},\mathrm{rel}}
  \]
  At the archimedean place, if $f$ has weight $k$, then its infinitesimal character is $\big(\frac{k-1}{2},-\frac{k-1}{2}\big)$. There is a subtlety in identifying modular forms with automorphic forms on $\U(1,1)$, but this is fine if $f$ has trivial nebentype, which implies $k\in 2\Z$. The infinitesimal character of $\chi$ is $(\ell)$, with $\ell\in\Z$. If $\abs{\ell}<\frac{k-1}{2}$, then we are in the first setting $ABA$. If $\abs{\ell}>\frac{k-1}{2}$, then we are in one of the last two cases depending on the sign of $\ell$.
\end{example}

\subsection{Hypothesis on Hida family}\label{ss:BigGaloisRep}
This section recalls some aspects of Hida theory on a unitary group. Let $\mathtt{V}$ be an $n$-dimensional Hermitian space with respect to $\CM/\sh{F}$, and let $\mb{G}=\U(\mathtt{V})$. Since $p$ splits completely in $\CM$, it contains a maximal torus of the form
\[
  \mathbf{T}_n=\Res_{\sh{F}/\Q}\G_m^n.
\]
Following the notations introduced in \S\ref{ss:WeightSpace}, we have the weight space $\underline{\sh{W}}$. Given a classical point $x\in\underline{\sh{W}}_n$, let $(w_1^\sigma,\cdots,w_n^\sigma)_{\sigma\in\Sigma_p^+}$ be its collection of weights. Its \emph{unitary weight} is the tuple $(a_1^\sigma,\cdots,a_n^\sigma)_{\Sigma_\infty^+}$ defined by
\[
  a_i^{\sigma_\infty}=w_i^{\iota\circ\sigma_\infty}+\frac{n-1}{2}\in\Z+\frac{n-1}{2}
\]
This definition reflects the shifts used in defining Galois representations in \S\ref{sec:GalRep}.

Let $\underline{\Lambda}_n=\Or[[\mb{T}_n(\Z_p)]]$. For $i=1,\cdots,n$, there are standard cocharacters $\epsilon_i:\Res_{\sh{F}/\Q}\G_m\to\mathbf{T}_n$. Form the tautological character $\bs{\chi}_i:\Gal_{\CM}\to\underline{\Lambda}^\times$ as in equation~\eqref{eqn:TautChar}. For a fixed tame level, let $\underline{\mathbb{I}}_n$ be the big ordinary Hecke algebra for $\mb{G}$, cf.~for example \cite[\S 7.1]{EHLS}, where it is denoted by $\mathbf{T}^{\mathrm{ord}}_{K_r,R}$ for $R=\Or$. By Hida's control theorem (Theorem 7.2.1 of \emph{op.~cit.}), $\underline{\mathbb{I}}_n$ is finite locally free over $\underline{\Lambda}_n$.

\begin{hyp}
  There exists a $p$-adic family of Galois representations $\bs{T}_n$ of rank $n$ over $\underline{\mathbb{I}}_n$ which is conjugate self-dual in the sense of Definition~\ref{def:CSD}. Moreover, for each $w\in\Sigma_p^+$, it comes with an increasing $\Gal_{\CM_w}$-stable filtration
  \[
    0=\Fil_0\bs{T}_n\subseteq\cdots\subseteq\Fil_n\bs{T}_n=\bs{T}_n
  \]
  whose graded pieces are
  \[
    (\Fil_i/\Fil_{i-1})\bs{T}_n\simeq\bs{\alpha}_i^w\bs{\chi}_i|_{\Gal_{\CM_w}}
  \]
  Finally, there exists a point of $x_{\Pi_n}:\underline{\mathbb{I}}_n\to\bar{\Q}_p$ such that the specialization of $\bs{T}_n$ at $x_{\Pi_n}$ is the Galois representation attached to $\Pi_n$ as in \S\ref{sec:GalRep}.
\end{hyp}

In the residually multiplicity-free case, the existence of $\bs{T}$ together with the ordinary filtration follows from the construction of a $p$-refined pseudocharacter in \cite[\S 7.5]{BC09} together with their result lifting pseudocharacters to representations (Proposition 1.6.1 of \emph{op.~cit.}). However, it is not clear that conjugate self-duality, in the strong sense we require, follows from the construction, so we are stating it as a hypothesis.

\subsection{Iwasawa theory: algebraic side}\label{ss:IwAlg}
Apply the discussion of the previous subsection to $\Pi_{n-1}^\vee$ and $\Pi_n$. This gives a full weight space
\[
  \underline{\sh{W}}:=\underline{\sh{W}}_{n-1}\times\underline{\sh{W}}_n
\]
of dimension $(2n-1)g$. Given a collection of weight interlacing strings $\square=(\square_\sigma)_{\sigma\in\Sigma_\infty^+}$, we can define a subset of classical points
\[
  \underline{\sh{W}}^\square\subseteq\underline{\sh{W}}^\mathrm{cl}
\]
as those points whose \emph{unitary} weights interlace according to $\square$ at each $p$-adic place. This is Zariski dense for any choice of $\square$. As $\square$ runs over all tuples of interlacing strings, the sets $\underline{\sh{W}}^\square$ partition the set of classical points.

Over the weight space, we have a finite flat Hecke algebra $\underline{\mathbb{I}}$ and a $p$-adic family of Galois representations
\[
  \bs{T}:=\bs{T}_{n-1}\otimes\bs{T}_n(n)
\]
It is conjugate self-dual of rank $n(n+1)$ over $\underline{\mathbb{I}}$. Using the filtrations on $\bs{T}_{n-1}$ and $\bs{T}_n$, we can define $\Gal_{\CM_w}$-invariant subspaces
\[
  \bs{T}_\square\subseteq\bs{T}
\]
for each interlacing string $\square$, analogous to the definition of $\rho_\square\subseteq\rho_\Pi$. Given a collection $\square=(\square_\sigma)_{\sigma\in\Sigma_\infty^+}$, we obtain a Greenberg-type local condition for each pair of places $(w,\bar{w})$ of $\CM$ above $p$, as in equation~\eqref{eqn:GreenbergComplex}. All together, we can define the Selmer complex
\[
  \widetilde{\RG}_\square(\CM,\bs{T})
\]
and its cohomology groups $\widetilde{\h}^\ast_\square(\CM,\bs{T})$.

By construction, $\rho_\Pi$ is a specialization of $\bs{T}$. The next result follows easily from \cite[Proposition 2.2.4]{NekovarParityIII}.
\begin{lemma}\label{lem:Constant}
  There exists $\varepsilon_{\bs{T}}\in\{\pm 1\}$ such that for all classical points $x:\underline{\mathbb{I}}\to\bar{\Q}_p$, the specialization $\bs{T}_x$ has finite $\varepsilon$-factor equal to $\varepsilon_{\bs{T}}$.
\end{lemma}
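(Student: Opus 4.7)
The plan is to show that the finite $\varepsilon$-factor is locally constant as a $\{\pm 1\}$-valued function on the classical locus of $\underline{\sh{E}}$, and then use connectedness (of the component being considered) to conclude.

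First I would decompose the global $\varepsilon$-factor of a classical specialization as
\[
  \varepsilon(\bs{T}_x) = \varepsilon_\infty(\bs{T}_x)\cdot\prod_{v\in S}\varepsilon_v(\bs{T}_x),
\]
where $S$ is a fixed finite set of finite places containing all places of ramification of $\bs{T}$ and all places above $p$; outside $S$ the local representation is unramified and $\varepsilon_v=1$. The finite $\varepsilon$-factor is by definition the second factor. At a place $v\in S$ with $v\nmid p$, admissibility of $\bs{T}$ (in the sense of Definition~\ref{def:Family}) implies that the inertial action on $\bs{T}$ is defined over $\underline{\mathbb{I}}$, so the inertial type of $\bs{T}_x$ is constant over all classical $x$, and the only thing that can vary is the Frobenius action, which does so $p$-adic analytically. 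The local $\varepsilon$-factor is a continuous function of the Weil--Deligne parameter, and since the tensor product $\bs{T}=\bs{T}_{n-1}\otimes\bs{T}_n(n)$ is conjugate self-dual of symplectic sign, $\varepsilon_v(\bs{T}_x)\in\{\pm 1\}$. A continuous $\{\pm 1\}$-valued function on a connected rigid analytic space is constant, so $\varepsilon_v$ is constant on each component of $\underline{\sh{E}}$.

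At a place $w$ above $p$, I would use the ordinary filtration provided in \S\ref{ss:BigGaloisRep}. This gives $\bs{T}$ the structure of a successive extension of characters $\bs{\alpha}_i^w\bs{\chi}_i$ that interpolate $p$-adic analytically, and similarly for $\bs{T}_{n-1}$ and hence the tensor product $\bs{T}$. The local $\varepsilon$-factor of a potentially semistable $p$-adic representation can be computed from its Weil--Deligne representation, which for an ordinary classical specialization is explicitly determined by the graded pieces and the Hodge--Tate weights at $w$, both of which vary $p$-adic analytically through the universal characters $\bs{\chi}_i$ and units $\bs{\alpha}_i$. Again the resulting function takes values in $\{\pm 1\}$ and is continuous, so is locally constant. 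Applying \cite[Proposition 2.2.4]{NekovarParityIII}, which packages precisely this local-constancy-and-discreteness principle for root numbers of conjugate self-dual representations in $p$-adic families, the product $\prod_{v\in S}\varepsilon_v$ is a locally constant function of $x\in\underline{\sh{E}}$, hence constant on each connected component. Since we have implicitly fixed such a component (via the choice of tame character $\psi$ and irreducible component of the ordinary eigenvariety), the common value $\varepsilon_{\bs{T}}$ is well-defined.

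The main technical point is at the places above $p$: one must check that the precise formula for the local $\varepsilon$-factor of a crystalline or potentially crystalline representation truly assembles into a continuous function on $\underline{\sh{E}}$, despite the fact that individual Hodge--Tate weights and crystalline Frobenius eigenvalues change under specialization. This is exactly where the invocation of Nekov\'a\v{r}'s proposition is essential, as it supplies the required formula uniformly in family rather than requiring a direct specialization-by-specialization verification.
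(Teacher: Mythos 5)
Your proof takes essentially the same route as the paper's: the paper dispatches the lemma by citing \cite[Proposition 2.2.4]{NekovarParityIII}, and your argument ultimately rests on that same citation as well, after first spelling out the decomposition into local factors and the heuristic for why each should be locally constant. The extra discussion you include (constancy of the inertial type away from $p$, the ordinary filtration at $p$, discreteness of $\{\pm 1\}$ plus connectedness of the component) is useful motivation but is precisely the content packaged by Nekov\'a\v{r}'s proposition, so your final invocation of it makes the preceding sketch largely redundant rather than an independent argument. One small caution worth flagging: the $\varepsilon$-factor is not literally a continuous function on the rigid space $\underline{\sh{E}}$ --- it is only defined on the discrete set of classical points --- so the ``continuous $\{\pm 1\}$-valued function on a connected space is constant'' phrasing needs to be replaced by the statement that the $\varepsilon$-factor factors through the $p$-adically interpolated inertial/Frobenius/Hodge--Tate data via a locally constant recipe; this is exactly what Nekov\'a\v{r}'s result supplies, and is why the paper cites it rather than arguing continuity directly.
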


Therefore, the global root number at a classical point $x$ is
\[
  \varepsilon(\bs{T}_x)=\varepsilon_{\bs{T}}\varepsilon_\infty(\bs{T}_x)
\]
By a computation similar to the one made in Section~\ref{sec:LocalGGP}, the archimedean root number (for the fixed additive character $\psi_\C$) counts the number of positive Hodge--Tate weights, so it is a function of the weight interlacing string. We denote it by $\varepsilon(\square)$. In particular, if $\varepsilon_{\bs{T}}\varepsilon(\square)=-1$, then the central $L$-values of all $x\in\underline{\sh{W}}^{\square}$ vanish for sign reasons. Taking this into account, we need both the coherent and incoherent versions of Iwasawa main conjecture.
\begin{conj}\label{conj:MainConjecture}
  Let $\square$ be a tuple of interlacing strings. There are two cases
  \begin{itemize}
    \item \textbf{Coherent case} If $\varepsilon_{\bs{T}}\varepsilon(\square)=1$, then $\widetilde{\h}^2_\square(\CM,\bs{T})$ is torsion, and
    \[
      \Char_{\mathbb{I}}\widetilde{\h}^2_\square(\CM,\bs{T})=(\sh{L}_p^\square)^2
    \]
    where $\sh{L}_p^\square$ is a $p$-adic measure whose square interpolates the central $L$-values of points in $\sh{W}^\square$, to be discussed in the following subsection.
    \item \textbf{Incoherent case} If $\varepsilon_{\bs{T}}\varepsilon(\square)=-1$, then $\widetilde{\h}^2_\square(\CM,\bs{T})$ has rank 1 over $\mathbb{I}$, and there is a special class
    \[
      \bs{z}\in\widetilde{\h}^1_\square(\CM,\bs{T})
    \]
    such that
    \[
      \Char_{\mathbb{I}}\widetilde{\h}^2_\square(\CM,\bs{T})_\mathrm{tors}=\Char_{\mathbb{I}}\left(\frac{\widetilde{\h}^1_\square(\CM,\bs{T})}{\mathbb{I}\cdot\bs{z}}\right)^2
    \]
  \end{itemize}
\end{conj}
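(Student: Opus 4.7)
The conjecture has two parallel but structurally different halves, and the plan is to attack them as a coupled system, exploiting Theorem~\ref{thm:Equiv} to transfer information between nearby strings rather than attempting each $\square$ in isolation. Before anything else I would verify that the hypotheses of that equivalence theorem hold for $\bs{T}$: (no-pole) follows from the regular algebraic, conjugate self-dual nature of the generic classical point together with big-image results for RACSDC Galois representations, and (no-triv-zero) follows from the ordinary filtration whose graded pieces carry the non-trivial universal characters $\bs{\chi}_i$. The dichotomy itself is legitimized by Lemma~\ref{lem:Constant}: the finite $\varepsilon$-factor is constant in the family, so the partition of classical points into coherent and incoherent regions is determined entirely by the archimedean computation of \S\ref{sec:LocalGGP}.

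For an \emph{incoherent} $\triangle$, the natural first step is to establish the rank statement $\rank_{\mathbb{I}} \widetilde{\h}^2_\triangle = 1$ together with the divisibility
\[
  \Char_{\mathbb{I}}\left(\frac{\widetilde{\h}^1_\triangle(\CM,\bs{T})}{\mathbb{I}\cdot\bs{z}_\triangle}\right)^{\!2} \ \Big|\ \Char_{\mathbb{I}}\widetilde{\h}^2_\triangle(\CM,\bs{T})_\mathrm{tors}
\]
by an Euler/Kolyvagin system argument. Concretely, one constructs a norm-compatible family of classes refining $\bs{z}_\triangle$ over an anticyclotomic-type tower and feeds this into the split anticyclotomic Euler system machine of Jetchev--Nekov\'a\v{r}--Skinner \cite{JNS}; in the diagonal string, the input is supplied by \cite{LaiSkinner}, and the moment map framework of \cite{LRZ-Moment} should upgrade this to the full Hida family $\underline{\mathbb{I}}$. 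Once one has such a divisibility for $\triangle$, Theorem~\ref{thm:Equiv} transports it to the coherent divisibility for every nearby $\square$, \emph{provided} one can identify $\mathrm{reg}(\bs{z}_\triangle)$ with the interpolated $\sh{L}_p^\square$.

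For the reverse divisibility in either case, I would appeal to an Eisenstein-congruence method on the unitary Shimura variety in the spirit of Skinner--Urban: given $\sh{L}_p^\square$, one builds congruences between the Hida family containing $\Pi$ and an Eisenstein family whose Galois side produces classes in $\widetilde{\h}^2_\square(\CM,\bs{T})$, bounding its characteristic ideal from above by $(\sh{L}_p^\square)^2$. The virtue of the present framework is that this calculation only needs to be done on one side of each wall, since Theorem~\ref{thm:Equiv} propagates equalities rather than just divisibilities. A parallel rank statement $\rank_{\mathbb{I}}\widetilde{\h}^2_\square = 0$ in the coherent case would follow from non-vanishing of $\sh{L}_p^\square$ at a Zariski-dense set of classical specializations, known in the diagonal case by the work of Vatsal and its extensions.

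The principal obstacle, by far, is the \emph{explicit reciprocity law} identifying $\mathrm{reg}(\bs{z}_\triangle)$ with the interpolated $p$-adic $L$-function: this comes down to matching the $p$-adic Abel--Jacobi image of an arithmetic GGP cycle with a doubling or Rankin--Selberg zeta integral, and is already a serious calculation in the $n=2$ case (\cite{BDP}, \cite{CastellaTuan}). In higher rank, both the \emph{construction} of $\bs{z}_\square$ for non-diagonal strings and the matching reciprocity law are open; the degeneration scheme sketched in the introduction, in which $\Pi_n$ is allowed to become an isobaric sum $\Pi_{n-2}\boxplus\chi_1\boxplus\chi_2$, is the most promising source of such classes, but interpolating these constructions across a full Hida family and controlling the ensuing sign and period ambiguities is delicate. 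A secondary structural difficulty is that the $g$-fold product in $\binom{2n-1}{n}^{g}$ forces all these inputs to be compatible simultaneously across every archimedean place, which is precisely the combinatorial bookkeeping that Theorem~\ref{thm:Equiv} was designed to make tractable one wall at a time.
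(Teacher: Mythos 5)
The statement you are addressing is a \emph{conjecture}: the paper proposes Conjecture~\ref{conj:MainConjecture} as a framework and explicitly does not prove it. There is therefore no ``paper's own proof'' to compare against, and your text is correctly framed as a research program rather than a proof. That said, a few remarks are in order.

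What you have written is, to a large extent, a faithful reorganization of the discussion the paper itself gives in the introduction (\S 1.2), in \S\ref{sec:App}, and around Theorem~\ref{thm:Equiv} and the Expectation following Definition~\ref{def:Nearby}. Your choice to treat the coherent and incoherent conjectures as a coupled system linked wall-by-wall via Theorem~\ref{thm:Equiv} is precisely the paper's intended use of that theorem, and your identification of the explicit reciprocity law $\mathrm{reg}(\bs{z}_\triangle)=\sh{L}_p^\square$ as the central bottleneck is exactly the point the paper stresses. Your remarks that the lower bound (Euler-system) side should come from \cite{JNS}/\cite{LaiSkinner}/\cite{LRZ-Moment} and the upper bound from Eisenstein congruences are also consistent with the literature cited, including the work of Liu--Tian--Xiao for $\mathtt{diag}$. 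The verification that (no-triv-zero) holds because the graded pieces of the ordinary filtration carry non-trivial universal characters matches the paper's own justification.

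The genuine gaps, which you largely acknowledge, are: (i) for generic $\square$ there is no construction of $\bs{z}_\square$, and the degeneration idea via isobaric sums is speculative even at the level of producing a class, let alone one satisfying the torsion and non-vanishing hypotheses (1) and (2) of Theorem~\ref{thm:Equiv}; (ii) even in the diagonal case, the reciprocity law over the full $(2n-1)g$-variable Hida family is open; (iii) the Eisenstein-congruence divisibility is not established in this generality, and it is not automatic that the two one-sided divisibilities close up to an equality, since Theorem~\ref{thm:Equiv} transports a divisibility to a divisibility in the \emph{same direction}, so you would still need both directions on at least one side of the wall; and (iv) your claimed verification of (no-pole) via big-image results is plausible but not justified, and the paper does not claim it in this generality --- in fact \S\ref{ss:BigGaloisRep} treats even the existence and conjugate self-duality of $\bs{T}$ as a Hypothesis. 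One small technical point worth flagging: Theorem~\ref{thm:Equiv} as stated works over $\Lambda$ (or over $\mathbb{I}$ only under a Gorenstein hypothesis, per the remark following it), so an argument aiming at the $\Char_{\mathbb{I}}$-formulation of the conjecture must address that refinement. None of this is a flaw in your understanding, but it confirms that what you have is a roadmap reproducing the paper's own outlook, not a proof.
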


Our main result Theorem~\ref{thm:Equiv} was axiomatized to relate these two conjectures. To relate them, we introduce the following notion.
\begin{definition}\label{def:Nearby}
  We say two interlacing strings $\square$ and $\triangle$ are \emph{nearby} if one if obtained from another by replacing a substring $AB$ with $BA$. Two weight interlacing strings are nearby if they are nearby at exactly one archimedean place, and they are the same elsewhere.
\end{definition}

On the tableau, this corresponds to the operation of deleting an extremal square, for example
\begin{center}
  \begin{tikzpicture}[scale=0.4]
    \draw [fill=gray] (0,0) rectangle (3,1);
    \draw [fill=gray] (0,0) rectangle (2,3);
    \draw (0,0) grid (3,4);
    \draw [->] (4.5,2) -- (5.5,2);
    \begin{scope}[xshift=7cm]
	    \draw [fill=gray] (0,0) rectangle (3,1);
      \draw [fill=gray] (0,0) rectangle (1,3);
      \draw [fill=gray] (0,0) rectangle (2,2);
      \node at (1.5,2.5) {$\times$};
      \draw (0,0) grid (3,4);
    \end{scope}
  \end{tikzpicture}
\end{center}
The quotient $\bs{T}_\square/\bs{T}_\triangle$ is then the rank 1 space corresponding to the square deleted, so we can apply the theorem, at least over $\Lambda$. Further observe that $\varepsilon(\triangle)\varepsilon(\square)=-1$, so exactly one of them is incoherent. We can state the following expectation.
\begin{expect}
  Given a family $\bs{T}$, let $\triangle$ be an incoherent word for $\bs{T}$, i.e.\ $\varepsilon_{\bs{T}}\varepsilon(\triangle)=-1$. Suppose we have a ``special class'' $\bs{z}_\triangle\in\widetilde{\h}^1_\triangle(\CM,\bs{T})$, then
  \[
    \Char_{\mathbb{I}}\widetilde{\h}^2_\triangle(\CM,\bs{T})_\mathrm{tors} \supseteq \Char_{\mathbb{I}}\left(\frac{\widetilde{\h}^1_\triangle(\CM,\bs{T})}{\mathbb{I}\cdot\bs{z}_\triangle}\right)^2
  \]
  For any nearby word $\square$, let $\sh{L}_p^\square$ be the image of $\bs{z}_\triangle$ under the corresponding regulator map, then
  \[
    \Char_{\mathbb{I}}\widetilde{\h}^2_\square(\CM,\bs{T})\supseteq(\sh{L}_p^\square)^2
  \]
  Moreover, equality in any of the statements implies equality in all other statements.
\end{expect}

As explained in the remark following Theorem~\ref{thm:Equiv}, the first divisibility should follow from the construction of $\bs{z}_\triangle$ by an Euler system argument. However, we only have one example so far:
\[
  \triangle=\mathtt{diag},\quad\varepsilon_{\bs{T}}=-\varepsilon(\triangle)
\]
In this case, the method of \cite{LoefflerSpherical,LRZ-Moment} allows one to interpolate the usual diagonal cycles in a $p$-adic family, and the constructions of \cite{LaiSkinner} can be used to extend them to an Euler system. The above expectation relates this cycle to $p$-adic $L$-functions in the $2n$ tableaux nearby to it. For $n=2$, they are
\begin{center}
  \begin{tikzpicture}[scale=0.4]
    \draw [fill=gray] (0,0) rectangle (2,1);
    \draw (0,0) grid (2,3);
    \begin{scope}[xshift=5cm]
	    \draw [fill=gray] (0,0) rectangle (1,2);
      \draw (0,0) grid (2,3);
    \end{scope}
    \begin{scope}[xshift=10cm]
	    \draw [fill=gray] (0,0) rectangle (1,3);
      \draw (0,0) grid (2,3);
    \end{scope}
    \begin{scope}[xshift=15cm]
	    \draw [fill=gray] (0,0) rectangle (2,2);
      \draw (0,0) grid (2,3);
    \end{scope}
  \end{tikzpicture}
\end{center}
Example~\ref{ex:DiagNearby} works out some numerology for constructing $p$-adic $L$-functions in this case.

Beyond the diagonal cycle, we are not aware of any systematic construction of Selmer classes. Moreover, if $\varepsilon_{\bs{T}}=\varepsilon(\mathtt{diag})$, then there is no systematic construction of Selmer classes for any weight interlacing string. It should be possible to construct them by degenerating $p$-adic families. For example, in the case $n=2$, the work of Castella--Tuan \cite{CastellaTuan} discussed in \S\ref{ss:NoHeegner} constructs the special class for the interlacing string $AAB$ using Gross--Kudla--Schoen cycles.

\subsection{Iwasawa theory: analytic side}\label{sec:Analytic}
From algebraic considerations, we see that different $p$-adic $L$-functions attached to each weight interlacing string. This can also be seen on the analytic side, though there are more mysterious phenomena here. We will explain some of them in the approach using coherent cohomology.

The global Gan--Gross--Prasad formula expresses the central $L$-value in terms of a period integral, which can potentially be interpolated to construct a $p$-adic $L$-function. The archimedean data, i.e. the signatures of $W,\ V$ and the nature of the discrete series at infinity, should determine the transcendental period. This is made precise in \cite{GHL}. Since the underlying geometry is so different, we are also not expecting a uniform $p$-adic $L$-function for all archimedean data. In fact, we propose the following imprecise conjecture.
\begin{conj}
  For each of the $\binom{2n-1}{n}^g$ weight interlacing strings $\square$, there exists a $p$-adic $L$-function $\sh{L}_p^\square$ in the Hecke algebra $\underline{\mathbb{I}}$ which interpolates $L\big(\frac{1}{2},\Pi\big)$ when the weights belong to $\underline{\sh{W}}^\square$.
\end{conj}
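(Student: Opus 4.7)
The plan is to construct $\sh{L}_p^\square$ by $p$-adically interpolating Gan--Gross--Prasad period integrals, using one unitary Shimura variety per weight interlacing string. For each $\square = (\square_\sigma)_{\sigma \in \Sigma_\infty^+}$ with $\varepsilon_{\bs T}\varepsilon(\square) = +1$, the archimedean recipe of Section~\ref{sec:LocalGGP} produces, for each $\sigma$, a distinguished pair of Hermitian spaces $V_{n-1}^{\square_\sigma} \subseteq V_n^{\square_\sigma}$ whose signatures are read off from the tableau associated with $\square_\sigma$. In this coherent case the collection of local data glues to a global pair $V_{n-1}^\square \subseteq V_n^\square$ over $\CM/\sh{F}$. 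Set $\mb{G}^\square = \U(V_{n-1}^\square) \times \U(V_n^\square)$; the GGP formula for stable cuspidal parameters \cite{GGPStable} then expresses $L(1/2,\Pi_x)$ for classical $x \in \underline{\sh{W}}^\square$ as the square of a period integral of cusp forms on $\mb{G}^\square$ whose archimedean components lie in the packet selected by $\square$. The incoherent case $\varepsilon_{\bs T}\varepsilon(\square) = -1$ forces the central values to vanish identically on $\underline{\sh{W}}^\square$ by the functional equation, so the natural interpretation is that $\sh{L}_p^\square$ is zero in $\underline{\mathbb{I}}$, with the arithmetic content reorganized into the special class $\bs{z}_\square$ of Conjecture~\ref{conj:MainConjecture}.

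I would first supply the Hida-theoretic input on $\mb{G}^\square$. By \S\ref{ss:CoherentCoh}, the discrete series at each $\sigma$ selected by $\square_\sigma$ contributes to the coherent cohomology of $\Sh_{\mb{G}^\square}$ in a fixed bidegree $q_{\square_\sigma}$ determined tableau-by-tableau. Hida theory for ordinary coherent cohomology with varying weight, extending the results in the holomorphic case to the higher-degree cohomology dictated by $\square$, should produce a finite $\underline{\mathbb{I}}$-module on the automorphic side matching the Galois side family $\bs{T}$. I would then define $\sh{L}_p^\square$ as the pairing of such a family against its conjugate, restricted along the diagonal embedding $\U(V_{n-1}^\square) \hookrightarrow \mb{G}^\square$. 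The branching statements of \S\ref{sec:LocalGGP} ensure that at each classical $x \in \underline{\sh{W}}^\square$ the resulting period is exactly the test-vector GGP integral, so by \cite{GGPStable} it is a non-zero multiple of $L(1/2,\Pi_x)$, explaining the interpolation only inside $\underline{\sh{W}}^\square$: at classical points outside this region the relevant discrete series sits in a different cohomological degree and the construction cannot detect its central $L$-value.

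The principal obstacle will be pinning down the archimedean and $p$-adic interpolation factors, that is, the precise ratio between the period produced by the construction and the complex $L$-value. One must compute the archimedean local GGP integrals in the chosen cohomological realization of the discrete series and verify that the transcendental period matches the prediction of \cite{GHL}; this is a delicate unitary branching computation that is currently available only in limited cases, such as holomorphic $(p,0)$ signatures, and for strings $\square$ far from $\mathtt{diag}$ one must work with non-holomorphic coherent cohomology classes where the $p$-adic interpolation theory is much less developed. A subtler issue, only visible after the construction is in place, is that different strings $\square$ produce $p$-adic $L$-functions on \emph{different} Shimura varieties; yet Theorem~\ref{thm:Equiv} predicts that for nearby $\square, \triangle$ the functions $\sh{L}_p^\square$ and $\sh{L}_p^\triangle$ are linked through the regulator of a single Galois-side class, so matching the automorphic constructions to such wall-crossing reciprocity laws, and thereby closing the loop with the main conjectures of \S\ref{sec:Main}, seems out of reach of current techniques outside of the diagonal case.
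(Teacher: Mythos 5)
This statement is a \emph{conjecture}: the paper does not prove it, and §\ref{sec:Analytic} is explicitly labelled as ``further speculations.'' What the paper supplies instead of a proof is a heuristic plan — interpolate Gan--Gross--Prasad period integrals via cup products in coherent cohomology of the Shimura varieties attached to the archimedean discrete-series data that $\square$ distinguishes, using the numerology $q_{n-1}=q_n$ from \eqref{eqn:CohNumerology} and some form of higher Hida theory outside the holomorphic range. Your proposal reproduces this plan faithfully, including the role of \cite{GGPStable}, the use of the paper's archimedean recipe to read off signatures from the tableau, the Hida-theoretic gluing over $\underline{\mathbb{I}}$, and a frank acknowledgment of the obstacles (non-holomorphic coherent cohomology, matching periods with \cite{GHL}, and wall-crossing reciprocity). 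Your interpretation of the incoherent case as $\sh{L}_p^\square = 0$ is reasonable, though the paper's discussion after the conjecture restricts attention to the coherent situation.

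One genuine gap in your reasoning, which the paper explicitly flags at the end of §\ref{sec:Analytic}, is that the assignment
\[
\mathtt{dist}:\{\text{weight interlacing strings}\}\to\{\text{pairs of chambers}\}
\]
is \emph{not} injective. You write that ``at classical points outside this region the relevant discrete series sits in a different cohomological degree,'' but this is false: the operation $AABB\to BBAA$ (removing an extremal $2\times 2$ block) produces a distinct string $\square'$ with the same distinguished Harish-Chandra codes, hence the same Hermitian spaces, the same Shimura varieties, and the same degrees $q_{n-1}=q_n$. Your construction, as described, would manufacture literally the same $\underline{\mathbb{I}}$-valued pairing for $\square$ and $\square'$, but the main conjectures in §\ref{ss:IwAlg} require $\sh{L}_p^\square$ and $\sh{L}_p^{\square'}$ to govern genuinely different Selmer complexes, since the local conditions at $p$ differ. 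The distinction must therefore be carried entirely by different $p$-adic Euler factors and different transcendental-period normalizations (as the description in \cite{GHL} predicts), and your proposal does not explain how the cup-product construction sees this. This is exactly the ``genuinely new phenomenon beyond the $\GL_2$-case'' that the paper calls out, and any serious attempt at the conjecture has to grapple with it.
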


\begin{remark}
  This is an anticyclotomic $p$-adic $L$-function, in the sense that all of its points of interpolation corresponds to central $L$-values. It is different from the cyclotomic $p$-adic $L$-function recently constructed in \cite{DisegniZhang}. One could imagine extending the above conjecture to a $p$-adic $L$-function in $n(n+1)+1$ variables, incorporating cyclotomic deformations.
\end{remark}

One standard approach to construct $p$-adic $L$-functions is to interpret the period integral as a pairing in coherent cohomology of Shimura varieties. More precisely, suppose $\Pi$ is coherent, and $\mathtt{V}_{n-1}\subseteq \mathtt{V}_n$ is the relevant pair of Hermitian spaces distinguished by $\Pi$. For $N\in\{n-1,n\}$, we can form the unitary group $\mb{G}_N=\U(\mathtt{V}_N)$ and its associated Shimura variety $\Sh_N$. To construct the $p$-adic $L$-function, one would like to interpret the period integral
\[
  \int_{[\mb{G}_{n-1}]}\varphi_{n-1}^\vee(h)\varphi_n(h)\,dh
\]
as a pairing between coherent classes on $\Sh_{n-1}$. Let $(\square_\sigma)_{\sigma\in\Sigma_\infty^+}$ be the weight interlacing string for $\Pi$, then the local conjecture predicts a pair of Harish-Chandra codes $(\epsilon^{\mathrm{HC}}_{n-1,\sigma},\epsilon^{\mathrm{HC}}_{n,\sigma})$ for each archimedean place $\sigma$. Let $q_{n-1}$ and $q_n$ be the degrees described in \S\ref{ss:CoherentCoh} attached to this collection, then a basic numerology for the construction to succeed is
\begin{equation}\label{eqn:CohNumerology}
  q_{n-1}=q_n.
\end{equation}
For example, \cite{HarrisSqRoot} considers the case where the pair $(\Pi_{n-1},\Pi_n)$ distinguishes the holomorphic discrete series on both groups, so $q_{n-1}=q_n=0$. Note that we are using $\Pi=\Pi_{n-1}^\vee\otimes\Pi_n$, which is consistent with the use of \emph{anti}-holomorphic forms on the small group in \emph{op.~cit.}

We now compute what this means in terms of weight interlacing. In the case of Harris' work, the Harish-Chandra codes are
\[
  \epsilon_{n-1}^{\mathrm{HC}}=\underbrace{1\cdots1}_{q-1}\underbrace{0\cdots0}_p,\quad \epsilon_{n}^{\mathrm{HC}}=\underbrace{1\cdots1}_{q}\underbrace{0\cdots0}_p
\]
Using the recipe described in \S\ref{sec:LocalGGP}, we see that at every archimeden place, the weight interlacing tableau consists of removing a block of $p$ contiguous squares from $\mathtt{diag}$ from the end.
\begin{center}
  \begin{tikzpicture}[scale=0.4]
    \begin{scope}[xshift=-7cm]
      \node at (-3,2) {$p=2$};
	    \draw [fill=gray] (0,0) rectangle (3,1);
      \draw [fill=gray] (0,0) rectangle (1,3);
      \draw [fill=gray] (0,0) rectangle (2,2);
      \draw (0,0) grid (3,4);
      \draw [->] (4.5,2) -- (5.5,2);
    \end{scope}
    \draw [fill=gray] (0,0) rectangle (2,1);
    \draw [fill=gray] (0,0) rectangle (1,3);
    \node at (2.5,0.5) {$\times$};
    \node at (1.5,1.5) {$\times$};
    \draw (0,0) grid (3,4);
  \end{tikzpicture}
\end{center}
Technically, the above pair is only relevant if $n$ is even, and the relevant pair for odd $n$ are the anti-holomorphic representations. This is because of our choice of base point for the endoscopic labelling, cf.~equation \eqref{eqn:RelevantPair}. This does not affect our discussion.

\begin{remark}
  The construction in \cite{HarrisSqRoot} only gives a 1-variable $p$-adic $L$-function. In our framework, this weight deformation corresponds to the top-left square of the contiguous block. In the case represented by the previous picture, we would be considering the relations between the incoherent conjecture for $\triangle$ and the coherent conjecture for $\square$.
  \begin{center}
  \begin{tikzpicture}[scale=0.4]
    \begin{scope}[xshift=-10cm]
      \node at (-1.5,1.5) {$\triangle=$};
      \draw [fill=gray] (0,0) rectangle (1,3);
      \draw [fill=gray] (0,0) rectangle (2,2);
      \draw (0,0) grid (3,4);
    \end{scope}
    \node at (-1.5,1.5) {$\square=$};
    \draw [fill=gray] (0,0) rectangle (2,1);
    \draw [fill=gray] (0,0) rectangle (1,3);
    \draw (0,0) grid (3,4);
  \end{tikzpicture}
\end{center}
In the case $p=1$, $\triangle=\mathtt{diag}$, and $\square$ is nearby, so this 1-dimensional deformation is the exact one needed to relate $L$-values to the diagonal cycles.
\end{remark}

In general, removing an arbitrary set of squares from the diagonal results in a pair of Harish-Chandra codes such that $\epsilon_{n}^{\mathrm{HC}}$ is obtained from $\epsilon_{n-1}^{\mathrm{HC}}$ by appending a 1 to the front. The result still satisfies equation~\eqref{eqn:CohNumerology}, though the common degree is no longer 0, and higher Hida theory would be required to construct the relevant $p$-adic $L$-functions. By conjugation, the same holds if we add an arbitrary set of squares to the diagonal.

\begin{example}\label{ex:DiagNearby}
The $2n$ tableaux nearby to the diagonal cycles correspond to either adding or removing one square from the diagonal tableau. Since the diagonal tableau distinguishes the totally definite signature, these nearby tableaux distinguishes the signature $\U(n-2,1)\hookrightarrow\U(n-1,1)$. The graph of this embedding gives rise to the diagonal cycle.

On the other hand, the above computation shows that the $p$-adic $L$-function for these tableaux can potentially be constructed using cup products in coherent cohomology for the same embedding. Therefore, it is reasonable to expect that current methods can prove explicit reciprocity laws relating the diagonal cycle to all $2n$ nearby $p$-adic $L$-functions.
\end{example}

This does not account for all weight interlacing strings satisfying the numerology \eqref{eqn:CohNumerology}. The point is as follows: we have defined a function
\[
  \mathtt{dist}:\{\text{weight interlacing strings}\}\to\{\text{pairs of chambers}\}
\]
It turns out that both sides have the same size, but the map is not a bijection. Indeed, an operation of the form $AABB\to BBAA$ does not change the image. On a tableau, this corresponds to removing an extremal $2\times 2$ square, for example
\begin{center}
  \begin{tikzpicture}[scale=0.4]
    \begin{scope}[xshift=-7cm]
	    \draw [fill=gray] (0,0) rectangle (3,1);
      \draw [fill=gray] (0,0) rectangle (1,3);
      \draw [fill=gray] (0,0) rectangle (2,2);
      \node at (0.5,2.5) {$\times$};
      \node at (2.5,0.5) {$\times$};
      \draw (0,0) grid (3,4);
      \draw [->] (4.5,2) -- (5.5,2);
    \end{scope}
    \draw [fill=gray] (0,0) rectangle (2,2);
    \draw (0,0) grid (3,4);
    \draw [<->] (4.5,2) -- (5.5,2);
    \begin{scope}[xshift=7cm]
      \draw (0,0) grid (3,4);
    \end{scope}
  \end{tikzpicture}
\end{center}

We suspect that this operation alone explains the fibres of $\mathtt{dist}$. If two words have the same image, then the archimedean data describing their period integrals are identical. This is a genuinely new phenomenon beyond the $\GL_2$-case. If $\square$ and $\square'$ are in the same fibre, then we expect there should be a relation between $\sh{L}_p^\square$ and $\sh{L}_p^{\square'}$. However, note that the description of the transcendental period in \cite{GHL} is different for $\square$ and $\square'$. Similarly, the $p$-adic Euler factors should also differ.

On the algebraic side, in the process of deleting the $2\times 2$ square, we see the following 6 regions:
\begin{center}
  \begin{tikzpicture}[scale=0.4]
    \begin{scope}[xshift=-15cm]
      \draw (0,0) grid (2,2);
      \draw [->] (3,1) -- (4,1);
      \draw [fill=gray] (5,0) rectangle (6,1);
      \draw (5,0) grid (7,2);
    \end{scope}
    \draw [->] (-7,1.5) -- (-6,2);
    \draw [->] (-7,0.5) -- (-6,0);
    \begin{scope}[yshift=1.5cm]
      \draw [fill=gray] (-5,0) rectangle (-3,1);
      \draw (-5,0) grid (-3,2);
    \end{scope}
    \begin{scope}[yshift=-1.5cm]
      \draw [fill=gray] (-5,0) rectangle (-4,2);
      \draw (-5,0) grid (-3,2);
    \end{scope}
    \draw [->] (-2,2) -- (-1,1.5);
    \draw [->] (-2,0) -- (-1,0.5);
    \draw [fill=gray] (0,0) rectangle (2,1);
    \draw [fill=gray] (0,0) rectangle (1,2);
    \draw (0,0) grid (2,2);
    \draw [->] (3,1) -- (4,1);
    \draw [fill=gray] (5,0) rectangle (7,2);
    \draw (5,0) grid (7,2);
  \end{tikzpicture}
\end{center}
The two diagrams at the end are coherent. If we expect their $p$-adic $L$-functions to be related, then there should be a relation between the special classes for the diagrams in the second and fourth column. The simplest case is the relation between the diagonal cycle and the conjectural one attached to weight interlacing string $BABAA$. This new string distinguishes the archimedean data $\U(2,0)\hookrightarrow\U(2,1)$, with the form on $\U(2,1)$ in the holomorphic discrete series.

\bibliographystyle{alpha}
\bibliography{../../NT}

\end{document}